\DeclareMathOperator{\var}{var}
\newtheorem{prop}{Proposition}[section]
\newtheorem{rem}{Remark}[section]
\newtheorem{lema}{Lemma}[section]
\newtheorem{defi}{Definition}[section]
\newtheorem{teo}{Theorem}[section]                                             
\newtheorem{eje}{Example}[section]
\newtheorem{coro}{Corollary}[section]
\def\R{{\mathbb R}}
\def\N{{\mathbb N}}
\title[Multifractal analysis of Birkhoff averages]{Multifractal analysis of Birkhoff averages for countable Markov maps}
\date{\today}
\author{Godofredo Iommi} \address{Facultad de Matem\'aticas,
Pontificia Universidad Cat\'olica de Chile (PUC), Avenida Vicu\~na Mackenna 4860, Santiago, Chile}
\email{giommi@mat.puc.cl}
\urladdr{http://www.mat.puc.cl/\textasciitilde giommi/}
\author{Thomas Jordan} \address{The School of Mathematics, The University of Bristol, University Walk, Clifton, Bristol, BS8 1TW, UK}
\email{Thomas.Jordan@bristol.ac.uk}
\urladdr{http://www.maths.bris.ac.uk/~matmj}
\begin{document}

\begin{abstract}
In this paper we prove a multifractal formalism of Birkhoff averages for interval maps with countably many branches. Furthermore, we prove that under certain  assumptions  the Birkhoff spectrum is real analytic. We also show that new phenomena occurs, indeed  the spectrum can be constant or it can have points where it is not analytic. Conditions for these to happen are obtained.  Applications of these results to number theory are also given. Finally, we compute the Hausdorff dimension of the set of points for which the Birkhoff average is infinite.
\end{abstract}

\maketitle

\section{Introduction} 
The Birkhoff average of a regular function with respect to an hyperbolic dynamical system can take a wide range of values.  This paper is devoted to  study the fine structure of level sets determined by Birkhoff averages.  The class of dynamical systems we consider are interval maps with countably many branches. These maps can be modeled by the (non-compact) full-shift on a countable alphabet. The lack of compactness of this model, and the associated convergence problems, is one of the major difficulties that has to be overcome in order to obtain a precise description of the level sets.

Let us be more precise, denote by $I=[0,1]$  the unit interval. We consider the  class of EMR (expanding-Markov-Renyi) interval maps. This class was considered  by Pollicott and Weiss in \cite{pw} when studying multifractal analysis of pointwise dimension. 
\begin{defi} \label{maps}
A map $T:I \to I$ is an EMR map, if there exists a countable family $\{ I_i \}_{i}$ of closed intervals (with disjoint interiors $\textrm{int }I_n$) with $I_i \subset I$ for every $i \in \mathbb{N}$, satisfying
\begin{enumerate}
\item The map is $C^2$ on $\cup_{i=1}^{\infty} \textrm{int }I_i$.
\item There exists $\xi >1$ and $N\in\N$ such that for every $x \in \cup_{i=1}^{\infty} I_i$ and $n\geq N$
we have $|(T^n)'(x)|>\xi^n$.
\item The map $T$ is Markov and it can be coded by a full-shift on a countable alphabet. 
\item The map satisfies the Renyi condition, that is, there exists a positive number $K>0$
such that
\[ \sup_{n \in \N} \sup_{x,y,z \in I_n} \frac{|T''(x)|}{|T'(y)| |T'(z)|} \leq K. \]                  
\end{enumerate}
\end{defi}

The \emph{repeller} of such a map is defined by
\[\Lambda:=\left\{x \in \cup_{i=1}^{\infty} I_i: T^n(x) \textrm{ is well defined for every } n \in \mathbb{N} \right\}.\]
For simplicity  we will also assume that zero is the unique  accumulation point  of the set of endpoints of $\{I_i\}$.

\begin{eje} \label{ga}
The Gauss map $G:(0,1] \to (0,1]$ defined by
\[G(x)= \frac{1}{x} -\left[ \frac{1}{x} \right],\]
where $[ \cdot]$ is the integer part, is an EMR map.
\end{eje}
The ergodic theory of EMR maps can be studied using its symbolic model and the available results for countable Markov shifts. We follow this strategy in order to describe the thermodynamic formalism for EMR maps  for a large class of potentials (see Section \ref{st}).

Let $\phi: \Lambda  \to \mathbb{R}$ be a continuous function. We will be interested in the level sets determined by the Birkhoff averages of $\phi$. Let 
\begin{eqnarray*}
\alpha_m=\inf \left\{ \lim_{n \to \infty} \frac{1}{n} \sum_{i=0}^{n-1} \phi (T^i x): x \in  \Lambda \right\} \textrm{ and }&\\
\alpha_M=\sup \left\{ \lim_{n \to \infty} \frac{1}{n} \sum_{i=0}^{n-1} \phi (T^i x): x \in  \Lambda \right\}.
\end{eqnarray*}
Note that, since the space $\Lambda$ is not compact,  it is possible for $\alpha_m$ and $\alpha_M$ to be minus infinity and infinity respectively.
For $\alpha \in [\alpha_m,\alpha_M]$ we define the level set  of points having Birkhoff average equal to $\alpha$ by
\begin{equation*}
J(\alpha):= \left\{x \in  \Lambda : \lim_{n \to \infty} \frac{1}{n} \sum_{i=0}^{n-1} \phi (T^i x) = \alpha \right\}. 
\end{equation*}
Note that these sets induce the so called \emph{multifractal decomposition} of the repeller,
\begin{equation*}
\Lambda= \bigcup_{\alpha=\alpha_m}^{\alpha_M}J(\alpha) \text{ } \bigcup J',
\end{equation*}
where $J'$ is the \emph{irregular set} defined by,
\[J' := \left\{x \in  \Lambda :  \textrm{ the limit }\lim_{n \to \infty} \frac{1}{n} \sum_{i=0}^{n-1} \phi (T^i x)  \textrm { does not exists }  \right\}. \]
The \emph{multifractal spectrum} is the function that encodes this decomposition and it is defined by
\[b(\alpha)= \dim_H(J(\alpha)),\]
where $\dim_H(\cdot)$ denotes the Hausdorff dimension (see Subsection \ref{hausdorff}).

The function $b(\alpha)$ has been studied in the context of hyperbolic dynamical systems  (for instance EMR maps with a finite Markov partition) for potentials with different degrees of regularity. Initially this was studied in the symbolic space for H\"{o}lder potentials by  Pesin and Weiss \cite{pw2} and for general continuous potentials  by Fan, Feng and Wu \cite{flw}.  Lao and Wu, \cite{flw}, then studied the case of continuous potentials for conformal expanding maps.  Barreira and Saussol \cite{bs1} showed that the multifractal spectrum for H\"{o}lder continuous functions is real analytic in the setting of conformal expanding maps. They stated their results in terms of variational formulas.  Olsen \cite{olsen}, in a similar setting  obtained more general variational formulae for families of continuous potentials. The multifractal analysis for Birkhoff averages for some non-uniformly hyperbolic maps (such as Manneville Pomeau) was studied by 
Johansson, Jordan, \"{O}berg and Pollicott in \cite{jjop}. There have also been several articles on multifractal analysis in the countable state case see for example \cite{fl, hmu, I1, KS}. However, these papers look at the local dimension spectra or the Birkhoff spectra for very specific potentials (e.g. the Lyapunov spectrum).

Our main result is that in the context of EMR maps we can make a variational characterisation of the multifractal spectrum,
\begin{teo}\label{main} 
Let $\phi \in \mathcal{R}$ be a potential then for $\alpha\in (-\infty,\alpha_M)$ we have that
\begin{equation}
b(\alpha) = \sup \left\{ \frac{h(\mu)}{\lambda(\mu)} :\mu \in \mathcal{M}_T, \int  \phi \, d \mu = \alpha \textrm{ and } \lambda(\mu) < \infty \right\},
\end{equation}
where the class $\mathcal{R}$ is defined in Subsection \ref{symbolic}, $\mathcal{M}_T$ denotes the set of $T-$invariant probability measures, $h(\mu)$ denotes the measure theoretic entropy and $\lambda(\mu)$ is the Lyapunov exponent (see Section \ref{st}).
\end{teo}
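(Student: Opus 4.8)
The plan is to transfer the problem to the symbolic model (the countable full shift $\Sigma$ with a locally Hölder potential built from $\log|T'|$) and then prove the two inequalities separately, exploiting the known thermodynamic formalism for countable Markov shifts (Sarig, Mauldin–Urbański) that has been recalled in Section \ref{st}.

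First I would establish the easy (upper) bound $b(\alpha) \le \sup\{h(\mu)/\lambda(\mu)\}$. Given $x \in J(\alpha)$, the orbit equidistributes in the sense that any weak-$*$ accumulation point of the empirical measures $\frac1n\sum_{i=0}^{n-1}\delta_{T^i x}$ is $T$-invariant; the Renyi and bounded-distortion conditions let one use a Frostman/mass-distribution argument or the natural cover by cylinders to bound the local dimension of an ergodic component by $h(\mu)/\lambda(\mu)$. The subtlety here, compared with the compact case, is that mass can escape to infinity (or $\lambda(\mu)=\infty$), so one has to argue that the relevant invariant measure witnessing the dimension has finite Lyapunov exponent; the restriction $\alpha<\alpha_M$ and the definition of the class $\mathcal{R}$ should be exactly what rules out the pathological escape. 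I would handle this by exhausting $\Lambda$ with the subsystems $\Lambda_k$ coded by the first $k$ symbols, on which everything is compact, and taking $k\to\infty$.

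The substantial direction is the lower bound: for every invariant $\mu$ with $\int\phi\,d\mu=\alpha$ and $\lambda(\mu)<\infty$, I must produce a subset of $J(\alpha)$ of Hausdorff dimension at least $h(\mu)/\lambda(\mu)-\varepsilon$. The standard route is a $w$-measure (Gibbs/equilibrium state) construction: approximate $\mu$ by an ergodic measure $\mu_k$ supported on a finitely generated subsystem $\Lambda_k$ with $h(\mu_k)/\lambda(\mu_k)$ close to $h(\mu)/\lambda(\mu)$ and $\int\phi\,d\mu_k$ close to $\alpha$ (here one needs that $\phi\in\mathcal{R}$ is sufficiently integrable so that the integral varies continuously along such approximations). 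On the compact subsystem one invokes the classical conditional variational principle (Barreira–Saussol, or directly a Moran-cover construction using the Gibbs property) to get a set $\subset J_{\Lambda_k}(\alpha_k)$ of dimension $\approx h(\mu_k)/\lambda(\mu_k)$, then concatenate countably many such constructions (for a sequence $\alpha_k\to\alpha$, $k\to\infty$) into a single Moran-type Cantor set inside $J(\alpha)$, controlling the Birkhoff averages along the way so that the limit is exactly $\alpha$ and computing the dimension of the limit set via a mass-distribution principle with a measure that is a Bernoulli-like product of the pieces.

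The main obstacle is precisely this non-compactness: making the countable concatenation converge to the right Birkhoff average while not losing dimension, and verifying that the approximating measures $\mu_k$ can be chosen with $h(\mu_k)/\lambda(\mu_k) \to h(\mu)/\lambda(\mu)$ and $\int\phi\,d\mu_k\to\alpha$ simultaneously — this is where the hypotheses packaged into $\mathcal{R}$ (integrability/regularity of $\phi$ and the thermodynamic behaviour of $\log|T'|$) do the real work, and where one must be careful that the supremum in the statement is genuinely approached and not merely bounded. I expect the entropy/Lyapunov approximation along $\Lambda_k$ to follow from semicontinuity and the approximation theory for countable Markov shifts, while the delicate bookkeeping lies in the Cantor set construction and its dimension estimate.
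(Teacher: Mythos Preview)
You have reversed which direction is the hard one, and your proposed upper bound argument has a genuine gap.

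\textbf{Lower bound.} In the paper this is almost trivial: if $\mu$ is ergodic with $\int\phi\,d\mu=\alpha$ and $\lambda(\mu)<\infty$, then by Birkhoff's theorem $\mu(J(\alpha))=1$, so $\dim_H J(\alpha)\ge \dim_H\mu=h(\mu)/\lambda(\mu)$. One then only needs a short lemma showing that the supremum over all invariant measures with $\int\phi\,d\mu=\alpha$ equals the supremum over ergodic ones, which follows by approximating on finite subsystems and invoking the compact-case results of \cite{jjop}. No Moran--Cantor concatenation is needed; your proposed construction would work but is far more elaborate than necessary.

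\textbf{Upper bound.} Here your empirical-measure route is problematic and this is where the real content lies. For $x\in J(\alpha)$, weak-$*$ accumulation points of $\frac{1}{n}\sum_{i=0}^{n-1}\delta_{T^ix}$ may lose mass to infinity; even when a probability limit $\mu$ exists, $\phi$ is unbounded, so there is no reason that $\int\phi\,d\mu=\alpha$, and entropy can drop along the limit. Exhausting by subsystems $\Lambda_k$ gives lower bounds on dimension, not upper bounds, so it does not rescue the argument. The paper instead works directly with covers: it shows that for each $k$ the cover of
\[
J(\alpha,N,\epsilon)=\Bigl\{x:\tfrac{1}{k}S_k\phi(x)\in(\alpha-\epsilon,\alpha+\epsilon)\ \text{for all }k\ge N\Bigr\}
\]
by $k$-cylinders is \emph{finite} (this uses $\phi\to-\infty$, i.e.\ $\phi\in\mathcal{R}$), defines $s_k$ by $\sum|I|^{s_k}=1$, builds the $T^k$-Bernoulli measure $\eta_k$ assigning mass $|I|^{s_k}$ to each such cylinder, and checks that $h(\eta_k)/\lambda(\eta_k)\to s=\limsup s_k$ with $\int\phi\,d\eta_k$ trapped near $\alpha$. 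A separate continuity lemma for the function $\alpha\mapsto\sup\{h(\mu)/\lambda(\mu):\int\phi\,d\mu=\alpha\}$ then lets $\epsilon\to 0$. The finiteness of the cylinder cover is the key device that sidesteps all the non-compactness issues you were worried about.
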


The other major result, which we proof in Section \ref{R}, is that when $\phi$ is sufficiently regular and satisfies certain asymptotic behaviour as $x\to 0$ the multifractal spectrum
has strong regularity properties.
\begin{teo} \label{analytic}
Let $\phi\in\bar{\mathcal{R}}$ be a potential. The following statements hold.
\begin{enumerate}
\item
If $\lim_{x\to 0}\frac{\phi(x)}{-\log |T'(x)|}=\infty$ and there exists an ergodic measure of full dimension $\mu$ then 
$b(\alpha)$ is real analytic on $\left(\int\phi\text{d}\mu,\alpha_M\right )$ and $b(\alpha)=\dim\Lambda$ for all $\alpha\leq\int\phi\text{d}\mu$.
\item
If $\lim_{x\to 0}\frac{\phi(x)}{-\log |T'(x)|}=\infty$ and there does not exist an ergodic measure of full dimension then $b(\alpha)$ is real analytic for all $\alpha\in (-\infty,\alpha_M)$. 
\item
If $\lim_{x\to 0}\frac{\phi(x)}{-\log |T'(x)|}=0$ then there are at most two point when $b(\alpha)$ is non-analytic. 
\end{enumerate}
\end{teo}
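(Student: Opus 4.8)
The plan is to translate the problem into the thermodynamic formalism of the underlying countable Markov shift and then to read the regularity of $b$ off an implicit-function-theorem analysis of a two-parameter pressure function. Passing to the coding $(\Sigma,\sigma)$ of Definition \ref{maps}(3), I regard $\phi$ and $\psi:=\log|T'|$ as potentials on $\Sigma$; the regularity built into $\bar{\mathcal R}$ should ensure that each $q\phi-t\psi$ is locally H\"older with summable variations and is not cohomologous to a constant (equivalently, $\phi$ is not cohomologous to any $c\psi+d$). Then, by the theory of Sarig and of Mauldin--Urba\'nski, the Gurevich pressure $\mathcal{P}(q,t):=P(q\phi-t\psi)$ is real analytic on the interior of $D:=\{(q,t):\mathcal{P}(q,t)<\infty\}$, with $\partial_q\mathcal{P}=\int\phi\,d\mu_{q,t}$ and $\partial_t\mathcal{P}=-\lambda(\mu_{q,t})<0$, the latter bounded away from $0$ because $|(T^N)'|>\xi^N$ forces $\lambda(\nu)>\log\xi$ for every invariant $\nu$. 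Hence $\mathcal{P}(q,t)=q\alpha$ determines, on a suitable open set, a unique real-analytic function $t=T(q,\alpha)$.

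Next I would establish the identity $b(\alpha)=\inf_q T(q,\alpha)$, the infimum taken over those $q$ with $(q,T(q,\alpha))\in D$. The bound $\le$ is immediate from Theorem \ref{main} and the variational principle: if $\int\phi\,d\mu=\alpha$ and $\lambda(\mu)<\infty$ then $h(\mu)-T(q,\alpha)\lambda(\mu)\le\mathcal{P}(q,T(q,\alpha))-q\alpha=0$, so $h(\mu)/\lambda(\mu)\le T(q,\alpha)$; the reverse bound follows by approximation with equilibrium states of the compact truncations of $\Sigma$, exactly as in the proof of Theorem \ref{main}. Implicit differentiation yields $\partial_q T=(\int\phi\,d\mu_{q,T}-\alpha)/\lambda(\mu_{q,T})$ and $\partial_q^2 T=\var_{\mu_{q,T}}\!\big(\phi-(\partial_q T)\psi\big)/\lambda(\mu_{q,T})$, which is positive by the no-cohomology assumption; so $q\mapsto T(q,\alpha)$ is strictly convex, and wherever its infimum is attained at an interior point $q^\ast(\alpha)$ the implicit function theorem applied to $\partial_q T(q,\alpha)=0$ (i.e.\ to $\int\phi\,d\mu_{q,T(q,\alpha)}=\alpha$), using $\partial_q^2 T\neq 0$, makes $q^\ast(\alpha)$ — and hence $b(\alpha)=T(q^\ast(\alpha),\alpha)$ — real analytic. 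Thus non-analyticity of $b$ can occur only at $\alpha$ for which this infimum is not an interior minimum of the relevant $q$-interval.

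The rest is bookkeeping governed by the asymptotics of $\sup_{[i]}\phi$ relative to $\sup_{[i]}\psi$ as $i\to\infty$; recall $s_\infty:=\inf\{s:P(-s\psi)<\infty\}\le\dim\Lambda\le 1$. If $\phi/(-\psi)\to 0$, then for every $q$ the $t$-slice of $D$ is $(s_\infty,\infty)$, so $T(\cdot,\alpha)$ is defined and analytic on all of $\mathbb{R}$; being strictly convex it has at most one critical point, so $\inf_q T(q,\alpha)$ is an interior minimum for $\alpha$ in some interval and equals $\lim_{q\to+\infty}T(q,\alpha)$ or $\lim_{q\to-\infty}T(q,\alpha)$ — each affine in $\alpha$ — outside it; hence $b$ is analytic away from the (at most two) junctions, giving (3). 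If $\phi/(-\psi)\to\infty$, then $\phi\to-\infty$ at $0$; since then $\sup_{[i]}\phi\le -\tfrac{s}{q}\sup_{[i]}\psi$ eventually (any fixed $s$, $q>0$), one checks $\mathcal{P}(q,t)<\infty$ for all $q>0$ and all $t$, while $\mathcal{P}(q,t)=\infty$ for $q<0$; moreover $T(q,\alpha)\to+\infty$ as $q\to+\infty$ because $\mathcal{P}(q,t)/q\to\alpha_M>\alpha$ for each fixed $t$, and $T(q,\alpha)\to\dim\Lambda$ (the root of $\mathcal{P}(0,\cdot)=0$) as $q\to 0^+$. The dichotomy (1)--(2) is whether an ergodic measure of full dimension $\mu$ — necessarily the equilibrium state of $-\dim\Lambda\cdot\psi$ — exists. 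If it does, then $\mu_{q,T(q,\alpha)}\to\mu$ as $q\to 0^+$, so $\partial_q T(0^+,\alpha)=(\int\phi\,d\mu-\alpha)/\lambda(\mu)$; with $\alpha_0=\int\phi\,d\mu$, strict convexity then forces $\inf_q T(q,\alpha)=\dim\Lambda$ for $\alpha\le\alpha_0$ and an interior analytic minimum for $\alpha>\alpha_0$, which is (1). If no such $\mu$ exists then the equilibrium states $\mu_{q,T(q,\alpha)}$ escape to infinity as $q\to 0^+$, which — because $\phi/(-\psi)\to\infty$ — forces $\int\phi\,d\mu_{q,T(q,\alpha)}/\lambda(\mu_{q,T(q,\alpha)})\to-\infty$ and hence $\partial_q T(0^+,\alpha)=-\infty$; so the minimum is interior for every $\alpha\in(-\infty,\alpha_M)$ and $b$ is analytic throughout, which is (2).

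The main obstacle, I expect, is making the non-compact arguments rigorous. Proving $b(\alpha)=\inf_q T(q,\alpha)$ and controlling the limits $q\to 0^+$ and $q\to\pm\infty$ of $T(q,\alpha)$ — in particular the assertions that $b\equiv\dim\Lambda$ on an interval in case (1) and is affine near the exceptional points in case (3) — requires the compact-exhaustion machinery of Theorem \ref{main} together with a careful escape-of-mass analysis of the entropy and Lyapunov functionals and of the equilibrium states $\mu_{q,t}$ near $\partial D$. In particular the flat piece in (1) is perhaps best shown directly: mix $\mu$ with a vanishing weight of a periodic measure on high symbols whose $\phi$-average is very negative, the hypothesis $\phi/(-\psi)\to\infty$ being exactly what keeps the entropy and Lyapunov exponent of the perturbation negligible, so the dimension ratio of the mixture tends to $\dim\Lambda$ while its $\phi$-average runs down to $\alpha$; the trivial bound $b\le\dim\Lambda$ then gives equality. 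A smaller point is the absence of phase transitions of $t\mapsto\mathcal{P}(q,t)$ inside $D$, which follows from positive recurrence via the Renyi condition and the full-shift structure of Definition \ref{maps}.
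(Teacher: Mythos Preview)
Your overall strategy --- study the two–parameter pressure $\mathcal P(q,t)=P(q\phi-t\psi)$, extract an implicit function $T(q,\alpha)$ from $\mathcal P(q,T)=q\alpha$, and read off analyticity of $b$ via the implicit function theorem at interior minima of $q\mapsto T(q,\alpha)$ --- is precisely the paper's approach (Section~\ref{R}), merely reparametrised. The paper organises the same computation around a trichotomy for the map $q\mapsto P(q(\phi-\alpha)-\delta(\alpha)\psi)$ (Lemma~\ref{options}) rather than around $\inf_q T(q,\alpha)$, but the analytic content is the same, and your treatment of parts (1) and (2) is essentially correct. The mixing construction you sketch for the flat piece in (1) is exactly the idea behind the paper's Lemma~\ref{lb}.

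There is, however, a genuine gap in your argument for part (3). You assert that under $\phi/(-\psi)\to 0$ the function $T(\cdot,\alpha)$ is defined and strictly convex on all of $\mathbb R$, and that when the infimum is not interior it equals $\lim_{q\to\pm\infty}T(q,\alpha)$, ``each affine in $\alpha$''. Neither claim is right. For any $\alpha\in(-\infty,\alpha_M)$ there exist invariant measures with $\int\phi$ strictly above and strictly below $\alpha$, so the variational principle forces $T(q,\alpha)\to+\infty$ as $q\to+\infty$ \emph{and} as $q\to-\infty$ (wherever $T$ is defined). Thus if $T(\cdot,\alpha)$ really were defined on all of $\mathbb R$ it would always have an interior minimum and $b$ would be analytic everywhere, contradicting the statement. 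What actually fails is that $T(q,\alpha)$ need not be defined for all $q$: once $P(-s_\infty\psi)<\infty$, the equation $\mathcal P(q,t)=q\alpha$ may have no root $t>s_\infty$.

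The paper repairs this with a different mechanism. Under $\phi/(-\psi)\to 0$ one has $\delta^*=s_\infty$, and the key step (Lemma~\ref{lb}, proved by the same mixing idea you invoked for case (1)) gives $b(\alpha)\ge s_\infty$ for \emph{every} $\alpha$. One then shows that $J:=\{\alpha:b(\alpha)>s_\infty\}$ is a single interval (convex combinations of an equilibrium state for $-s\psi$ with the $\mu_\alpha$ realising $b(\alpha)$), that on $J$ the interior–minimum analysis applies and $b$ is analytic, and that outside $J$ one has $b\equiv s_\infty$, a constant. The at–most–two non-analytic points are the endpoints of $J$. Your sketch misses this floor $\delta^*=s_\infty$ and the need to prove $b\ge\delta^*$ in this case; once you add that, the rest of your plan goes through.
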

Without the assumptions made in Theorem \ref{analytic} it is hard to say anything in general but it is possible to say things in specific cases. 
 We investigate this further in Sections 5 and 6. In particular in Section \ref{sh}  we look at the case when $\phi(x)=-\log |T'|$ and we also look at the shapes $b(\alpha)$ can take.  
 
In Section \ref{ejem} we apply the above two theorems to the Gauss map and obtain results relating to the continued fraction expansion.  Our results relate to classical ones by Khinchine \cite{k} regarding the 
size of sets determined by averaging values of the digits in the continued fraction expansion of irrational numbers. We not only consider the behaviour of the limit
\begin{equation*}
 \lim_{n \to \infty} \sqrt[n]{a_1 \cdot a_2 \cdot  \ldots  \cdot a_n} ,
\end{equation*}
where the continued fraction expansion of $x$ is given by $[a_1 a_2 \dots]$. But we generalise it to a wide range of other functions. For example, we are able to describe level sets determined by the arithmetic averages of the digits in the continued fraction:
\begin{equation*} 
\lim_{n \to \infty} \frac{1}{n} \left(a_1+a_2 +\dots +a_n   \right).
\end{equation*}
Note that there is related work in \cite{flm} where they look at the dimension of the sets where the frequencies of values the $a_i$ can take are prescribed.

Since the potentials we consider are unbounded their Birkhoff average can be infinite. In Section \ref{extr} we compute the Hausdorff dimension of the set of points for which the Birkhoff average is infinite.

\section{Symbolic model and Thermodynamic Formalism} \label{st}
In this Section we describe the thermodynamic formalism for EMR maps. In order to do so, we will first recall results describing the thermodynamic formalism in the symbolic setting.

\subsection{Thermodynamic formalism for countable Markov shifts} \label{CMS}
The full-shift on the countable alphabet $\mathbb{N}$ is the pair $(\Sigma, \sigma)$ where
\[
\Sigma = \left\{ (x_i)_{i \ge 1} : x_i \in \mathbb{N}\right\},
\]
and $\sigma: \Sigma \to \Sigma$ is the \emph{shift} map  defined by $\sigma(x_1x_2 \cdots)=(x_2 x_3\cdots)$.
We equip $\Sigma$ with the topology generated by the cylinders sets
\[ C_{i_1 \cdots i_n}= \{x \in \Sigma : x_j=i_j \text{ for } 1 \le j \le n \}.\] 
The $n-$variation of a function  $\phi: \Sigma \to \mathbb{R}$ are defined by
\[
V_{n}(\phi) := \sup \left\{|\phi(x)-\phi(y)| : x,y \in \Sigma,\
x_{i}=y_{i} \text{ for } 0 \le i \le n-1 \right\}.
\]
We say that a function $\phi: \Sigma \to \mathbb{R}$ has \emph{summable variation} if
$\sum_{n=2}^{\infty} V_n(\phi)<\infty$. If $\phi$ has summable variation then it is continuous.  A function $\phi: \Sigma \to \R$ is called \emph{weakly H\"older} if there exist $A > 0$ and $\theta \in (0,1)$ such that for all $n \geq 2$ we have $V_n(\phi) \leq A \theta^n$. The thermodynamic formalism is well understood for the full-shift on a countable alphabet. The following definition of pressure is due to Mauldin and Urba\'nski \cite{mu},

\begin{defi}
Let $\phi: \Sigma \to \mathbb{R}$ be a potential of summable variations, the \emph{pressure} of $\phi$ is defined by
\begin{equation}
P(\phi) = \lim_{n \to \infty} \frac{1}{n} \log \sum_{\sigma^n(x)=x} \exp \left( \sum_{i=0}^{n-1} \phi(\sigma^i x) \right).
\end{equation}
\end{defi}
The above limit always exits,  but it can be infinity. This notion of pressure satisfies the following results (see \cite{mu, sa1,sa2,sa3}),

\begin{prop}[Variational Principle]
If $\phi: \Sigma \to \mathbb{R}$ has summable variations and $P(\phi)<\infty$ then
\begin{equation*}
P(\phi)= \sup \left\{ h(\mu) +\int  \phi \, d
\mu : -\int  \phi \, d
\mu < \infty \textrm{ and } \mu\in \mathcal{M}_{\sigma}  \right\},
\end{equation*}
where $\mathcal{M}_{\sigma}$ is the space of shift invariant probability measures and $h(\mu)$ is the measure theoretic entropy (see \cite[Chapter 4]{wa}).
\end{prop}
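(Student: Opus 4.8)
The plan is to prove the two inequalities separately, exhausting the non-compact shift $\Sigma$ by the compact subsystems carried by finite sub-alphabets. For a finite set $K\subset\mathbb{N}$ write $\Sigma_K=\{x\in\Sigma:x_i\in K\text{ for all }i\ge 1\}$, a compact $\sigma$-invariant set on which $\sigma$ is expansive, and let $P_K(\phi)$ denote the pressure of $\phi|_{\Sigma_K}$. For the lower bound I would apply the classical variational principle on each $\Sigma_K$ (see \cite[Chapter 9]{wa}): since $\phi$ is continuous and $\Sigma_K$ is compact and expansive, the entropy map is upper semicontinuous on $\mathcal{M}_\sigma(\Sigma_K)$, so there is an equilibrium measure $\mu_K\in\mathcal{M}_\sigma(\Sigma_K)\subset\mathcal{M}_\sigma$ with $h(\mu_K)+\int\phi\,d\mu_K=P_K(\phi)$, and $\int\phi\,d\mu_K$ is finite because $\phi$ is bounded on the compact set $\Sigma_K$. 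Hence the supremum in the statement is at least $\sup_K P_K(\phi)$, and the lower bound follows from the approximation identity $P(\phi)=\sup_K P_K(\phi)$. At the level of the series defining the pressure, this identity reflects the elementary fact that every periodic point of $(\Sigma,\sigma)$ lies in some $\Sigma_K$, so that $Z_n:=\sum_{\sigma^n x=x}\exp(S_n\phi(x))=\sup_K\sum_{\sigma^n x=x,\,x\in\Sigma_K}\exp(S_n\phi(x))$; that this monotone supremum of partition functions passes to a supremum of the exponential growth rates is the substantive step, which I would take from Mauldin--Urba\'nski \cite{mu} and Sarig \cite{sa1,sa2,sa3}.

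For the upper bound, fix $\mu\in\mathcal{M}_\sigma$ with $-\int\phi\,d\mu<\infty$, and assume first that $\phi$ is bounded on each $1$-cylinder (equivalently $V_1(\phi)<\infty$), which covers the weakly H\"older case relevant to this paper. Let $\mathcal{P}=\{C_i:i\in\mathbb{N}\}$ be the generating partition into $1$-cylinders, put $\mathcal{P}^n=\bigvee_{i=0}^{n-1}\sigma^{-i}\mathcal{P}$ and $S_n\phi=\sum_{i=0}^{n-1}\phi\circ\sigma^i$. Subadditivity of $n\mapsto H_\mu(\mathcal{P}^n)$ together with invariance of $\mu$ gives $h(\mu)+\int\phi\,d\mu\le\tfrac1n\bigl(H_\mu(\mathcal{P}^n)+\int S_n\phi\,d\mu\bigr)$ for every $n$, and the case $n=1$ shows that $H_\mu(\mathcal{P})$ and $\int\phi\,d\mu$ are finite, so the right-hand side is unambiguous. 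On each $C\in\mathcal{P}^n$ one bounds $\int_C S_n\phi\,d\mu\le\mu(C)\sup_C S_n\phi$, and then applies the elementary inequality $\sum_j p_j(-\log p_j+a_j)\le\log\sum_j e^{a_j}$, valid for a probability vector $(p_j)$ whenever the right-hand series converges, to obtain
\begin{align*}
H_\mu(\mathcal{P}^n)+\int S_n\phi\,d\mu
&\le \log\sum_{C\in\mathcal{P}^n}\exp\Bigl(\sup_C S_n\phi\Bigr)\\
&\le \log Z_n + C_0,
\end{align*}
where the last inequality replaces $\sup_C S_n\phi$ by the value of $S_n\phi$ at the unique periodic point $p_C\in C$ with $\sigma^n p_C=p_C$, at a cost of $C_0:=\sum_{k\ge 1}V_k(\phi)<\infty$ (using summable variation). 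Dividing by $n$ and letting $n\to\infty$ gives $h(\mu)+\int\phi\,d\mu\le P(\phi)$.

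The hypothesis $P(\phi)<\infty$ is used precisely to guarantee that the partition functions $Z_n$, hence the series in the Gibbs--Jensen step, are finite. The restriction $V_1(\phi)<\infty$ cannot be removed by this argument: summable variation only requires $\sum_{n\ge 2}V_n(\phi)<\infty$, and when $V_1(\phi)=+\infty$ the constant $C_0$ is infinite and $\sum_{C\in\mathcal{P}^n}\exp(\sup_C S_n\phi)$ may itself diverge, so the crude estimate breaks. In that generality one instead induces on a single cylinder $C_a$ with $\mu(C_a)>0$: the first-return map is again a countable full shift, the induced potential has finite first variation, and Abramov's formula together with Kac's lemma transfers the inequality back to $\mu$; this is the route of \cite{sa1,sa2,sa3}. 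I expect the genuine obstacle throughout to be the approximation identity $P(\phi)=\sup_K P_K(\phi)$ --- the continuity of the pressure under exhaustion of the alphabet --- since this is where the non-compactness of $\Sigma$, highlighted in the introduction, must be confronted; for it I would invoke \cite{mu,sa1,sa2,sa3} rather than give an independent argument.
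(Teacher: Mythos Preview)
The paper does not give its own proof of this proposition: it is stated in Section~\ref{CMS} as one of several results imported from the literature, with the blanket attribution ``This notion of pressure satisfies the following results (see \cite{mu, sa1,sa2,sa3})''. So there is no argument in the paper to compare your proposal against.

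That said, your sketch is a reasonable outline of how the proof actually goes in those references, and you have correctly identified both the architecture (compact exhaustion for the lower bound, a Gibbs--Jensen estimate against the partition function for the upper bound) and the genuine technical issue (summable variation in the paper's sense only controls $\sum_{n\ge 2}V_n(\phi)$, so $V_1(\phi)$ may be infinite and the naive comparison of $\sup_C S_n\phi$ with $S_n\phi(p_C)$ can fail). Your proposed remedy via inducing on a cylinder is exactly Sarig's approach. Two small points worth tightening if you ever write this out in full: first, the claim that ``the case $n=1$ shows that $H_\mu(\mathcal{P})$ and $\int\phi\,d\mu$ are finite'' needs the finiteness of $Z_1$ as input, which in turn has to be extracted from $P(\phi)<\infty$ via an almost-supermultiplicativity estimate on the $Z_n$; second, the approximation identity $P(\phi)=\sup_K P_K(\phi)$ that you invoke for the lower bound is itself stated separately in the paper as Proposition~\ref{app}, so within the logic of this paper both propositions are being quoted rather than derived.
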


\begin{defi}
Let $\phi: \Sigma \to \mathbb{R}$ be a potential of summable variations. A measure $\mu \in \mathcal{M}_{\sigma}$ is called an \emph{equilibrium measure} for $\phi$ if
\[ P(\phi)=  h(\mu) +\int  \phi \, d \mu .\]
  \end{defi}

\begin{prop}[Approximation property] \label{app}
If $\phi: \Sigma \to \mathbb{R}$ has summable variations  then
\begin{equation*}
P( \phi) = \sup \{ P_{\sigma|K}( \phi) : K \subset \Sigma : K \ne \emptyset \text{ compact and } \sigma\text{-invariant}  \},
\end{equation*}
where $P_{\sigma|K}( \phi)$ is the classical topological pressure on
$K$ (for a precise definition see \cite[Chapter 9]{wa}).
\end{prop}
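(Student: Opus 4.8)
The plan is to pin $P(\phi)$ between the two sides by reducing every compact invariant subsystem to a full shift on a finite sub-alphabet. The first step I would record is a structural observation: if $K\subset\Sigma$ is non-empty, compact and $\sigma$-invariant, then $K\subseteq\Sigma_F$, where $F:=\{x_1:x\in K\}$ and $\Sigma_F:=F^{\N}\subset\Sigma$ is the full shift on $F$. Here $F$ is \emph{finite}, being the image of the compact set $K$ under the continuous coordinate map $x\mapsto x_1$ into the discrete space $\N$; and $\sigma$-invariance ($\sigma K\subseteq K$) forces $x_{k}\in F$ for every coordinate $k$ and every $x\in K$, so $K\subseteq F^{\N}$. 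Since each $\Sigma_F$ is itself a non-empty compact $\sigma$-invariant set and $P_{\sigma|K}(\phi)\leq P_{\sigma|\Sigma_F}(\phi)$ by monotonicity of the topological pressure under restriction to a smaller compact invariant set, the right-hand side of the statement equals $\sup\{P_{\sigma|\Sigma_F}(\phi):F\subset\N\text{ finite}\}$, and it suffices to compare this quantity with $P(\phi)$.

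For a finite alphabet $F$ the system $\Sigma_F$ is a full shift on finitely many symbols and $\phi|_{\Sigma_F}$ has summable variation; applying the defining formula for the pressure to this compact subsystem, and using that on finite-alphabet shifts this notion agrees with the classical topological pressure (see \cite{mu,wa}), one gets
\[
P_{\sigma|\Sigma_F}(\phi)=\lim_{n\to\infty}\frac1n\log Z_n^F,\qquad Z_n^F:=\sum_{\substack{x\in\Sigma_F\\ \sigma^n x=x}}\exp\Big(\sum_{i=0}^{n-1}\phi(\sigma^i x)\Big).
\]
Writing $Z_n:=\sum_{\sigma^n x=x}\exp\big(\sum_{i=0}^{n-1}\phi(\sigma^i x)\big)$, so that $P(\phi)=\lim_n\tfrac1n\log Z_n$ by definition, the periodic points of period $n$ in $\Sigma$ are in bijection with words in $\N^n$, whence $Z_n=\sup\{Z_n^F:F\subset\N\text{ finite}\}$ by monotone convergence over finite sub-sums of the series of non-negative terms; in particular $Z_n^F\leq Z_n$. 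This already gives $\sup_F P_{\sigma|\Sigma_F}(\phi)\leq P(\phi)$, one of the two inequalities.

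For the reverse inequality the difficulty is that the finite alphabet realising $Z_n^F\approx Z_n$ a priori depends on $n$, so a single $K$ need not work for all $n$. To bypass this I would invoke the almost-supermultiplicativity of the partition functions: concatenating initial blocks $w,v$ and using the summable-variation estimate to compare $\sum_{i=0}^{n+m-1}\phi(\sigma^i\overline{wv})$ with $\sum_{i=0}^{n-1}\phi(\sigma^i\overline w)+\sum_{i=0}^{m-1}\phi(\sigma^i\overline v)$ yields $Z_{n+m}\geq c\,Z_nZ_m$ for a constant $c=c(\phi)\in(0,1]$ depending only on $\sum_{n\geq2}V_n(\phi)$ — in particular independent of $n$, $m$ and of the sub-alphabet, so the same estimate holds for each $Z_n^F$. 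Hence $\log(cZ_n)$ and $\log(cZ_n^F)$ are superadditive, and Fekete's lemma gives $P(\phi)=\sup_n\tfrac1n\log(cZ_n)$ and $P_{\sigma|\Sigma_F}(\phi)=\sup_n\tfrac1n\log(cZ_n^F)$. Now fix $\varepsilon>0$, choose $n_0$ with $\tfrac1{n_0}\log(cZ_{n_0})>P(\phi)-\varepsilon$, and then, using $Z_{n_0}=\sup_F Z_{n_0}^F$, choose a finite $F$ with $\tfrac1{n_0}\log(cZ_{n_0}^F)>P(\phi)-\varepsilon$; this gives $P_{\sigma|\Sigma_F}(\phi)\geq\tfrac1{n_0}\log(cZ_{n_0}^F)>P(\phi)-\varepsilon$, and letting $\varepsilon\downarrow0$ yields $\sup_F P_{\sigma|\Sigma_F}(\phi)\geq P(\phi)$. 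When $P(\phi)=+\infty$ one reads the same argument with "$\tfrac1{n_0}\log(cZ_{n_0})>M$ for arbitrarily large $M$" (possible since then either some $Z_{n_0}=+\infty$, so $Z_{n_0}^F$ can be made arbitrarily large, or $\tfrac1n\log Z_n\to\infty$).

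The step I expect to be the main obstacle is extracting the supermultiplicativity constant $c$ in $Z_{n+m}\geq c\,Z_nZ_m$ and verifying that it is genuinely uniform over $n$, $m$ and over all finite sub-alphabets — i.e. carefully bounding the block-comparison error by $\sum_{n\geq2}V_n(\phi)$ — together with the routine but necessary care in the case $P(\phi)=+\infty$, where the partition functions may be infinite and "$Z_{n_0}^F$ close to $Z_{n_0}$" has to be interpreted as "$Z_{n_0}^F$ arbitrarily large". The remaining ingredients (finiteness of $F=\pi_1(K)$, monotonicity of pressure, the periodic-orbit formula on finite-alphabet shifts, and Fekete's lemma) are standard.
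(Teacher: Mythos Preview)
The paper does not give its own proof of this proposition; it is quoted from Mauldin--Urba\'nski \cite{mu} and Sarig \cite{sa1,sa2,sa3}. Your sketch is essentially the standard argument one finds in those references: reduce arbitrary compact invariant $K$ to finite-alphabet full shifts $\Sigma_F$, then compare periodic-orbit partition functions via an almost-supermultiplicativity estimate and Fekete's lemma. So there is nothing in the paper to compare against, and your overall strategy is the right one.

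One technical point needs more care than you indicate. You claim the supermultiplicativity constant $c$ in $Z_{n+m}\ge c\,Z_nZ_m$ depends only on $\sum_{n\ge 2}V_n(\phi)$, uniformly in the finite sub-alphabet $F$. With the full partition function as you set it up, the naive block comparison of $S_{n+m}\phi(\overline{wv})$ with $S_n\phi(\overline w)+S_m\phi(\overline v)$ produces, at the worst index $i=n-1$, two points $\sigma^{n-1}\overline{wv}$ and $\sigma^{n-1}\overline w$ that agree on only one symbol, hence a $V_1(\phi)$ term. Summable variations in the paper's sense ($\sum_{n\ge 2}V_n<\infty$) does not control $V_1$, which may be infinite. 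On each $\Sigma_F$ the first variation $V_1^F$ is finite by compactness, but it is not uniformly bounded in $F$ in general, so $c=c(F)$ may degenerate and your Fekete step would not give a uniform lower bound. The standard fix (this is how Sarig proceeds) is to use the Gurevich partition function
\[
Z_n(a)=\sum_{\sigma^n x=x,\ x_1=a} e^{S_n\phi(x)}
\]
for a fixed symbol $a$: then in the concatenation $wv$ with $w_1=v_1=a$ the points $\sigma^{n-1}\overline{wv}$ and $\sigma^{n-1}\overline w$ agree on \emph{two} symbols (namely $w_n$ and the forced $a$), so the total comparison error is at most $2\sum_{k\ge 2}V_k(\phi)$, genuinely independent of $F\ni a$. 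Since on the full shift $P(\phi)=\lim_n\frac1n\log Z_n(a)$ for every $a$, and likewise $P_{\sigma|\Sigma_F}(\phi)=\lim_n\frac1n\log Z_n^F(a)$, your Fekete argument goes through verbatim with $Z_n(a)$ and $Z_n^F(a)$ in place of $Z_n$ and $Z_n^F$.
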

\begin{defi} \label{Gibbs}
A probability measure $\mu$ is called a \emph{Gibbs measure} for the potential $ \phi$ if there exists two constants $M$ and $P$, such that for every cylinder $C_{i_{1} \dots i_{n}}$ and every $x \in C_{i_{1}, \dots i_{n}}$ we have that
\[ \frac{1}{M} \leq \frac{ \mu (C_{i_{1} \dots i_{n}})}{\exp(-nP + \sum_{j=0}^{n-1} \phi(\sigma^{j}x))} \leq M. \]
\end{defi}
\begin{prop}[Gibbs measures]
Let $\phi: \Sigma \to \mathbb{R}$ be a potential such that $\sum_{n=1}^{\infty} V_n(\phi)<\infty$ and $P(\phi) < \infty$ then $\phi$ has a unique Gibbs measure. 
\end{prop}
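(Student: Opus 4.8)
The plan is to run the transfer operator (Ruelle--Perron--Frobenius) machinery on the full shift and read the Gibbs measure off an eigenfunction-eigenmeasure pair. Throughout we normalise $P(\phi)=0$, replacing $\phi$ by $\phi-P(\phi)$ (this only shifts the constant $P$ appearing in Definition \ref{Gibbs}), and we write $S_n\phi:=\sum_{j=0}^{n-1}\phi\circ\sigma^j$ and $Z_n:=\sum_{\sigma^nx=x}e^{S_n\phi(x)}$, so $P(\phi)=\lim_n\frac1n\log Z_n$. Two preliminary remarks carry the routine load. \emph{Bounded distortion}: if $x,y$ lie in the same cylinder $C_{i_1\cdots i_n}$ then $\sigma^jx,\sigma^jy$ agree on their first $n-j$ symbols, so $|S_n\phi(x)-S_n\phi(y)|\le\sum_{k=1}^nV_k(\phi)\le C:=\sum_{k=1}^\infty V_k(\phi)<\infty$, uniformly in $n$ and in the cylinder. \emph{Finiteness}: the hypothesis $P(\phi)<\infty$ (unaffected by the normalisation) forces $Z_1:=\sum_ie^{\phi(\overline i)}<\infty$, since otherwise bounded distortion gives $Z_n\ge e^{-nV_1(\phi)}Z_1^{\,n}=\infty$ for every $n$ and hence $P(\phi)=\infty$; together with bounded distortion this makes the transfer operator $L_\phi f(x)=\sum_{\sigma y=x}e^{\phi(y)}f(y)$ well defined and bounded on bounded continuous functions, with $\|L_\phi\mathbf 1\|_\infty\le e^{V_1(\phi)}Z_1$.

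For \emph{existence} I would produce a Borel probability measure $\nu$ and a continuous $h$, bounded away from $0$ and $\infty$, with $L_\phi^{*}\nu=\nu$, $L_\phi h=h$ and $\int h\,d\nu=1$; the Gibbs measure is then $\mu=h\nu$, which is $\sigma$-invariant. The measure $\nu$ is obtained as a fixed point of the normalised dual $\nu\mapsto L_\phi^{*}\nu/(L_\phi^{*}\nu)(\Sigma)$ on the probability measures; the function $h$ is then built as a (sub)sequential or Ces\`aro limit of the normalised iterates $L_\phi^n\mathbf 1$, bounded distortion being exactly what makes $L_\phi^n\mathbf 1(x)\asymp L_\phi^n\mathbf 1(y)$ uniformly in $n$ (both are $\asymp Z_n$), so that the limit stays bounded away from $0$ and $\infty$. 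The full shift is topologically mixing and trivially has big images and preimages, so this is precisely the Ruelle--Perron--Frobenius theorem for countable Markov shifts of Mauldin--Urba\'nski \cite{mu} and Sarig \cite{sa2,sa3}; alternatively one can take the classical Gibbs measure $\mu_N$ of $\phi$ on the finite subsystem $\{1,\dots,N\}^{\N}$ and pass to a weak-$*$ limit, with $Z_1<\infty$ supplying the tightness that prevents loss of mass.

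The \emph{Gibbs property} of $\mu$ is then a short computation peculiar to the full shift: for each word $i_1\cdots i_n$ the map $x\mapsto i_1\cdots i_nx$ is the unique branch of $\sigma^{-n}$ with image in $C_{i_1\cdots i_n}$, so $L_\phi^n\mathbf 1_{C_{i_1\cdots i_n}}(x)=e^{S_n\phi(i_1\cdots i_nx)}$, and $L_\phi^{*n}\nu=\nu$ gives
\[
\nu(C_{i_1\cdots i_n})=\int e^{S_n\phi(i_1\cdots i_nx)}\,d\nu(x).
\]
Bounded distortion replaces the exponent by $S_n\phi(z)$ for any $z\in C_{i_1\cdots i_n}$ up to the factor $e^{\pm C}$; multiplying by the bounds on $h$ and undoing the normalisation $P(\phi)=0$ yields
\[
\frac1M\le\frac{\mu(C_{i_1\cdots i_n})}{\exp\!\bigl(-nP(\phi)+S_n\phi(z)\bigr)}\le M,\qquad z\in C_{i_1\cdots i_n},
\]
with $M=e^{C}\max h/\min h$, which is Definition \ref{Gibbs}.

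For \emph{uniqueness} (the relevant notion here being uniqueness among $\sigma$-invariant measures), let $\mu'$ be an invariant Gibbs measure with constants $M'$ and $P'$. Summing its defining inequality over all $n$-cylinders, using that the total mass is $1$ and that $e^{S_n\phi(\overline w)}\asymp e^{S_n\phi(z)}$ for $z\in C_w$ by bounded distortion, forces $e^{nP'}\asymp Z_n$ and hence $P'=\lim_n\frac1n\log Z_n=P(\phi)$. With the common value $P(\phi)$, the Gibbs bounds for $\mu$ and $\mu'$ show that $\mu$ and $\mu'$ are mutually absolutely continuous with Radon--Nikodym derivatives bounded above and below. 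On the other hand the Gibbs property on the full shift makes $\mu$ quasi-Bernoulli ($\mu(C_{uv})\asymp\mu(C_u)\mu(C_v)$ for concatenations $uv$), hence ergodic, and two equivalent ergodic $\sigma$-invariant probability measures coincide; therefore $\mu'=\mu$. I expect the one genuine obstacle to be the construction of the eigenmeasure $\nu$: $\Sigma$ is not compact, so the space of probability measures (and the relevant function cones) are not compact for free, and one must feed in the hypothesis $P(\phi)<\infty$ (via $Z_1<\infty$) to recover the tightness/normality needed for the fixed-point and limiting arguments --- everything else is bounded-distortion bookkeeping.
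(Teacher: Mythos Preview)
The paper does not give its own proof of this proposition: it is stated in Subsection~\ref{CMS} as a background result quoted from Mauldin--Urba\'nski \cite{mu} and Sarig \cite{sa1,sa2,sa3}, alongside the variational principle and the approximation property. Your sketch is therefore not to be compared against an in-paper argument but against those references, and it is faithful to them: the RPF construction of an eigenmeasure $\nu$ and eigenfunction $h$ for $L_\phi$, with bounded distortion supplying the uniform control and $Z_1<\infty$ (equivalently $P(\phi)<\infty$) supplying the tightness, is exactly the machinery of \cite{mu,sa3}, and your derivation of the Gibbs bounds from $L_\phi^{*n}\nu=\nu$ on the full shift is the standard one.

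One small point worth flagging. Definition~\ref{Gibbs} does not require $\sigma$-invariance, so the proposition as written could be read as asserting uniqueness among \emph{all} Gibbs measures, not only invariant ones. Your uniqueness argument (equivalence plus ergodicity) handles only the invariant case, as you explicitly note. In fact uniqueness holds in the stronger sense as well: the conformal (eigen)measure $\nu$ with $L_\phi^*\nu=e^{P(\phi)}\nu$ is itself unique on a topologically mixing countable Markov shift with BIP (which the full shift has), and any Gibbs measure is equivalent to $\nu$ with log-H\"older density, forcing it to equal $h\nu$ up to normalisation; see \cite{sa3}. If you want your write-up to match the proposition verbatim, either add this remark or simply cite \cite{sa3} for the full statement, as the paper does.
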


\begin{prop}[Regularity of the pressure function]
Let $\phi: \Sigma \to \mathbb{R}$ be a weakly H\"older potential such that $P(\phi) < \infty$ , there exists a \emph{critical value} $s^{*} \in (0, 1]$ such that for every $s <s^{*}$we have that $P(s \phi)= \infty$ and for every $s > s^{*}$we have that $P(s \phi)< \infty$. Moreover, if $s>s^{*}$ then
the function $s \to P(s \phi)$  is real analytic and every potential $s \phi$ has an unique equilibrium measure.
\end{prop}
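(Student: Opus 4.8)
The plan is to derive all three assertions from the transfer--operator theory for the countable full shift, reserving an elementary monotonicity argument for the existence of $s^{*}$.

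For the critical value I would first record that, for the full shift, $P(\psi)<\infty$ if and only if $\sum_{i}\exp\bigl(\sup_{C_{i}}\psi\bigr)<\infty$: because $\psi$ has summable variation, the partition functions in the definition of $P(\psi)$ are comparable, up to a fixed multiplicative constant, to $\bigl(\sum_{i}\exp\sup_{C_{i}}\psi\bigr)^{n}$. Applying this to $\psi=\phi$ and using $P(\phi)<\infty$ gives $\sup_{C_{i}}\phi\to-\infty$; in particular $\phi$ is bounded above and $\sup_{C_{i}}\phi<0$ for all but finitely many $i$. Hence for $0<s<s'$ one has $s'\sup_{C_{i}}\phi\le s\sup_{C_{i}}\phi$ for all large $i$, so $\sum_{i}\exp(s'\sup_{C_{i}}\phi)<\infty$ whenever $\sum_{i}\exp(s\sup_{C_{i}}\phi)<\infty$; that is, $P(s\phi)<\infty\Rightarrow P(s'\phi)<\infty$. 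Thus $\{s>0:P(s\phi)<\infty\}$ is a half-line, whose left endpoint is the critical value $s^{*}$. It satisfies $s^{*}\le 1$ because $P(\phi)<\infty$, and $s^{*}>0$ because $P(0\cdot\phi)$ is the topological entropy of the full shift on $\N$, which is infinite (there are infinitely many fixed points of $\sigma^{n}$ for each $n$); so $s^{*}\in(0,1]$ and the stated dichotomy holds.

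For analyticity and uniqueness I would fix $s_{0}>s^{*}$ and invoke the Ruelle--Perron--Frobenius theory of Sarig and of Mauldin--Urba\'nski: since the full shift on $\N$ trivially has the big images and preimages property and $s_{0}\phi$ is weakly H\"older with $P(s_{0}\phi)<\infty$, the potential $s_{0}\phi$ is positive recurrent, the transfer operator $\mathcal{L}_{s_{0}\phi}$ acts on a Banach space of locally H\"older functions with a simple isolated leading eigenvalue equal to $e^{P(s_{0}\phi)}$ and a spectral gap, and its eigenfunction and eigenmeasure produce an invariant Gibbs measure which is the unique equilibrium measure of $s_{0}\phi$ (uniqueness of the Gibbs measure being the proposition recalled above). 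Now $t\mapsto t\phi$ is affine in $t$, the weakly H\"older seminorms $V_{n}(t\phi)=|t|\,V_{n}(\phi)$ stay exponentially summable, and $\sum_{i}\exp(\mathrm{Re}(t)\sup_{C_{i}}\phi)<\infty$ for $t$ in a complex neighbourhood of $s_{0}$; expanding $e^{t\phi}$ in powers of $t$ shows that $t\mapsto\mathcal{L}_{t\phi}$ is holomorphic there. Kato's analytic perturbation theory for isolated simple eigenvalues then gives that the leading eigenvalue, and hence $P(t\phi)=\log\bigl(\text{leading eigenvalue of }\mathcal{L}_{t\phi}\bigr)$, is real analytic near $s_{0}$, and that the equilibrium measure persists and remains unique. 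Since $s_{0}>s^{*}$ was arbitrary, $s\mapsto P(s\phi)$ is real analytic on $(s^{*},\infty)$ and each $s\phi$ with $s>s^{*}$ has a unique equilibrium measure.

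The main obstacle is the operator-theoretic input of the last step: exhibiting a Banach space on which $\mathcal{L}_{s_{0}\phi}$ is quasi-compact with a spectral gap, and controlling $\|\mathcal{L}_{t\phi}-\mathcal{L}_{s_{0}\phi}\|$ uniformly for $t$ near $s_{0}$. This is precisely where the weakly H\"older hypothesis $V_{n}(\phi)\le A\theta^{n}$ is indispensable --- summable variation alone yields the variational principle and a Gibbs measure but not analytic dependence --- and the required uniformity leans on the convergence of $\sum_{i}\exp(\sup_{C_{i}}s_{0}\phi)$ together with the exponential decay of the variations. I would quote these estimates from \cite{mu,sa1,sa2,sa3} rather than redo them.
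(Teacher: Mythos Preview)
Your proposal is essentially correct, but you should be aware that the paper does not prove this proposition at all: it is stated without proof in Section~\ref{CMS} as one of several background results on the thermodynamic formalism for the countable full shift, with a blanket attribution to Mauldin--Urba\'nski \cite{mu} and Sarig \cite{sa1,sa2,sa3}. There is nothing to compare against beyond those citations.

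That said, what you have written is a faithful outline of how those references establish the result. The monotonicity argument for $s^{*}$ via $\sum_i \exp(\sup_{C_i}\phi)<\infty$ is standard, and the analyticity and uniqueness genuinely do come from the spectral-gap/Kato perturbation machinery for $\mathcal{L}_{t\phi}$ on a space of locally H\"older observables, exactly as you describe. Your identification of the weakly H\"older hypothesis as the crucial ingredient for the operator-theoretic step is also correct. If you were writing this up in full you would need to be slightly careful about one point: the existence of $s^*\in(0,1]$ as stated tacitly assumes $\phi$ is not bounded below (otherwise $P(s\phi)$ could be finite for all $s>0$ and even some $s\le 0$); in the paper's applications $\phi$ is always unbounded below, but the proposition as literally stated does not impose this. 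This is a wrinkle in the statement rather than in your argument.
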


\subsection{Symbolic model} \label{symbolic}
It is a direct consequence of the  Markov structure assumed on a EMR map $T$ that  
 $T:\Lambda \to \Lambda$ can be represented by a full-shift on a countable alphabet $(\Sigma, \sigma)$. Indeed, there exists a continuous map $\pi :\Sigma \to \Lambda$ such that $\pi \circ \sigma = T  \circ  \pi$.  Moreover, if we denote by $E$ the set of end points of the partition $\{I_i\}$, the map $\pi:\Sigma \to \Lambda \setminus \bigcup_{n \in \N} T^{-n} E$ is an homeomorphism.  Denote by $I(i_1, \dots i_n)= \pi (C_{i_1 \dots i_n})$ the cylinder of length $n$ for $T$. We will make use of the relation between the symbolic model and the repeller in order to describe the thermodynamic formalism for the map $T$. We first define the two classes of potentials that we will consider,
 \begin{defi} The class of \emph{regular} potentials is defined by
 \begin{equation*}
\mathcal{R}:= \left\{ \phi : \Lambda \to \R:\phi<0\text{, }\phi\circ\pi\text{ has summable variations and }\lim_{x \to 0}  \phi(x)=-\infty \right\}.
\end{equation*} 
 \end{defi}
  Note that if we have a potential $\psi:\Lambda\to\R$ such that $a\psi+b\in\mathcal{R}$ for some $a,b\in\R$ then since we can compute the Birkhoff spectrum for $a\psi+b\in\mathcal{R}$ we can compute the Birkhoff spectrum for $\psi$.  
 
 \begin{defi}\label{Holder} The class of \emph{strongly regular} potentials is defined by
 \begin{equation*}
\bar{\mathcal{R}}:= \left\{ \phi : \Lambda \to \R: \phi\in\mathcal{R}\text{ and }\phi\circ\pi\text{ is weakly H\"{o}lder} \right\}.
\end{equation*} 
 \end{defi}

 \begin{eje}
Let $\{a_n\}_n$ be a sequence of real numbers such that $a_n \to - \infty$. The  locally constant potential $\phi:\Lambda \to \R$ defined by $\phi(x) = a_n$ if $x \in I(n)$, is such that $\phi \in \bar{\mathcal{R}}$.
 \end{eje}
 
 The \emph{topological pressure} of a potential $\phi \in \mathcal{R}$ is defined by
 \[P_T(\phi)=  \sup \left\{ h(\mu) +\int  \phi \, d \mu : -\int  \phi \, d
\mu < \infty \textrm{ and } \mu\in \mathcal{M}_T  \right\}, \]
where $ \mathcal{M}_T$ denotes the space of $T-$invariant probability measures.
Since there exists a bijection between the space of $\sigma-$invariant measure $\mathcal{M}_{\sigma}$ and the space of  $T-$invariant measures $\mathcal{M}_T$  we have that
\begin{equation}
P_T(\phi)= P( \pi \circ \phi).
\end{equation}
Therefore, all the properties described in Subsection \ref{CMS} can be translated into properties of the topological pressure of the map $T$. Since both pressures have the exact same behaviour, for simplicity, we will denote them both by $P(\cdot)$. 

\begin{rem}
Since we are assuming that the set $E$ of end points of the partition has only one accumulation point and it is zero, we have that if $\phi \in \mathcal{R}$  then $\lim_{x \to 0} \phi(x) = -\infty$ and if $a \in \Lambda \setminus \{0\}$ then
$\lim_{x \to a} \phi(x) < \infty$. 
\end{rem}

\begin{rem} 
Note that if $T$ is an EMR map then the potential $-\log |T'| \in \mathcal{R}$. If $\mu \in \mathcal{M}_T$ then  the integral
\[\lambda(\mu):= \int \log |T'| \, d\mu, \]
will be called the \emph{Lyapunov exponent} of $\mu$.
\end{rem} 
%

\subsection{Hausdorff Dimension} \label{hausdorff}
In this subsection we recall basic definitions  from dimension theory. We refer to the books \cite{ba, fa, pe} for further details. A countable collection of sets $\{U_i \}_{i\in N}$ is called a $\delta$-cover of $F \subset\R$ if $F\subset\bigcup_{i\in\N} U_i$, and  for every $i\in\N$ the sets $U_i$ have diameter $|U_i|$ at most $\delta$. Let $s>0$, we define
\[
 H^s_{\delta} (F) :=\inf \left\{ \sum_{i=1}^{\infty} |U_i|^s : \{U_i \}_i \text{ is a } \delta\text{-cover of } F \right\}
\]
and
\[ H^s(F):=  \lim_{\delta \to 0} H^s_{\delta} (F).\]
The \emph{Hausdorff dimension} of the set $F$ is defined by
\[
{\dim_H}(F) := \inf \left\{ s>0 : H^s(F) =0 \right\}.
\]
We will also define the \emph{Hausdorff dimension} of a probability measure $\mu$ by
\[
{\dim_H}(\mu) := \inf \left\{ {\dim_H}(Z): \mu(Z)=1 \right\}.
\]
A measure $\mu \in \mathcal{M}_T$ is called a \emph{measure of maximal dimension} if
$\dim_H \mu = \dim_H \Lambda$.

\section{Variational principle for the Hausdorff dimension}
In this section we prove our main result. That is, we establish the Hausdorff dimension of the level sets $J(\alpha)$ satisfy a conditional variational principle.

\begin{teo} \label{main'}
Let $\phi \in \mathcal{R}$  then for $\alpha\in (-\infty,\alpha_M)$
\begin{equation}
\dim_H(J(\alpha)) = \sup \left\{ \frac{h(\mu)}{\lambda(\mu)} :\mu \in \mathcal{M}_T, \int  \phi \, d \mu = \alpha \textrm{ and } \lambda(\mu) < \infty \right\}.
\end{equation}
\end{teo}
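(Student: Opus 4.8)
The plan is to reduce everything to the symbolic model $(\Sigma,\sigma)$ via the coding map $\pi$. Since $\pi$ is a homeomorphism off the countable set $\bigcup_n T^{-n}E$ and the Renyi condition supplies bounded distortion, the diameter of a geometric cylinder $I(i_1,\dots,i_n)$ is comparable, up to a multiplicative constant independent of $n$, to $\exp(-S_n\log|T'|(x))$ for any $x$ in it, where $S_n\psi(x):=\sum_{i=0}^{n-1}\psi(T^ix)$; this lets one pass freely between the geometric and symbolic pictures. Write $s(\alpha)$ for the supremum on the right-hand side and $I_n(x)$ for the level-$n$ cylinder containing $x$. I would prove the two inequalities separately.

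For the lower bound I would first take $\mu$ ergodic with $\int\phi\,d\mu=\alpha$, $\lambda(\mu)<\infty$ and $h(\mu)>0$. Birkhoff's ergodic theorem (applied to $\phi$, which is $\mu$-integrable here) gives $\mu(J(\alpha))=1$, and Shannon--McMillan--Breiman together with Birkhoff applied to $\log|T'|$ and bounded distortion gives $\lim_n \log\mu(I_n(x))/\log|I_n(x)|=h(\mu)/\lambda(\mu)$ for $\mu$-a.e.\ $x$, hence $\dim_H J(\alpha)\ge\dim_H\mu=h(\mu)/\lambda(\mu)$. To reach a general (non-ergodic) $\mu$ approaching the supremum, I would approximate: using the ergodic decomposition together with Proposition~\ref{app} one finds ergodic measures $\nu$ carried by finite subsystems with $h(\nu)$, $\lambda(\nu)$ and $\int\phi\,d\nu$ arbitrarily close to those of $\mu$ — the unboundedness of $\phi$ being controlled by the summable-variation hypothesis in $\mathcal{R}$ — and then, since $\int\phi\,d\nu$ need not equal $\alpha$ exactly, build a Cantor subset of $J(\alpha)$ by concatenating ever-longer $\nu$-typical blocks with slightly perturbed frequencies, so that the Birkhoff averages of $\phi$ converge precisely to $\alpha$; a natural Bernoulli-type measure on this set has dimension at least $h(\nu)/\lambda(\nu)-\varepsilon$.

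For the upper bound, fix $\alpha<\alpha_M$ and $\varepsilon>0$, and set $Y_N=\{x:|\tfrac1nS_n\phi(x)-\alpha|\le\varepsilon\text{ for all }n\ge N\}$, so $J(\alpha)\subseteq\bigcup_N Y_N$. Covering $Y_N$ by the level-$n$ cylinders meeting it ($n\ge N$) and using bounded distortion, for $q\ge0$ and $s>0$ the $s$-sum of diameters is bounded, up to constants, by
\[
\sum_{|w|=n}\exp\!\bigl(-sS_n\log|T'|(x_w)+q(S_n\phi(x_w)-n(\alpha-\varepsilon))\bigr)\ \lesssim\ e^{\,n(P(-s\log|T'|+q\phi)-q(\alpha-\varepsilon))+o(n)},
\]
because the inserted exponent is nonnegative on the admissible $w$. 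Hence if some $q\ge0$, $s>0$ satisfy $P(-s\log|T'|+q\phi)<q(\alpha-\varepsilon)$ then $H^s(J(\alpha))=0$, so it remains to check that $\inf\{s:\exists q\ge0,\ P(-s\log|T'|+q\phi)\le q\alpha\}\le s(\alpha)$ and then let $\varepsilon\to0$. This is a Legendre-duality computation: at an optimal pair $(s^*,q^*)$ one has $P(-s^*\log|T'|+q^*\phi)=q^*\alpha$, and where the pressure is finite with an equilibrium state $\mu^*$, the one-sided derivatives in $q$ force $\int\phi\,d\mu^*=\alpha$ while the zero-pressure relation forces $s^*=h(\mu^*)/\lambda(\mu^*)\le s(\alpha)$; where the pressure is infinite or no equilibrium state exists, one runs the same argument on a large compact subsystem and passes to the limit via the Approximation Property (Proposition~\ref{app}).

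I expect the main obstacle to be precisely the non-compactness of $\Sigma$: the pressure functions $P(-s\log|T'|+q\phi)$ can equal $+\infty$ on part of the parameter range, equilibrium states may not exist, and measures realising $s(\alpha)$ may fail to exist, so the clean duality between the variational formula and the zero-pressure equation breaks down at the level of $\Sigma$ and must be recovered by exhausting $\Lambda$ with compact invariant subsets and taking limits. The second delicate point, present in both directions, is the unboundedness of $\phi$: one must ensure that the finitely many cylinders near $0$ do not spoil the Birkhoff averages in the Moran construction, nor the covering sums in the upper bound — which is exactly what the hypotheses defining $\mathcal{R}$ are designed to guarantee.
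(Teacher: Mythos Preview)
Your overall plan differs substantially from the paper's proof in the upper bound, and there is a genuine gap there.

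\textbf{Lower bound.} Your Moran/concatenation construction would work, but the paper takes a shorter route: it proves directly (Lemma~3.1, via finite subsystems and \cite{jjop}) that for any invariant $\mu$ with $\int\phi\,d\mu=\alpha$ and $\lambda(\mu)<\infty$ one can find an \emph{ergodic} $\nu$ with $\int\phi\,d\nu=\alpha$ exactly and $h(\nu),\lambda(\nu)$ within $\varepsilon$ of those of $\mu$. That avoids the Cantor-set machinery entirely.

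\textbf{Upper bound: the gap.} You bound the covering sum for $Y_N$ by the full partition function and conclude
\[
\sum_{I\cap Y_N\ne\emptyset}|I|^s\ \lesssim\ e^{\,n(P(q\phi-s\log|T'|)-q(\alpha-\varepsilon))}.
\]
For this to be useful you need $(q,s)$ with $P(q\phi-s\log|T'|)<\infty$ \emph{and} $<q(\alpha-\varepsilon)$, with $s$ arbitrarily close to $s(\alpha)$. In the countable-branch setting this can fail: the pressure may be $+\infty$ on the entire parameter region you need (this is exactly the phenomenon encoded by $\delta^*>0$ in Section~4). Your proposed fix---run the argument on a large compact $K$ and pass to the limit---does not close the gap: on the one hand $P_K\le P$, so finiteness of $P_K$ says nothing about $P$; on the other hand $J(\alpha)\not\subset K$ for any compact invariant $K$, so bounding $\dim_H(J(\alpha)\cap K)$ does not bound $\dim_H J(\alpha)$. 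The Legendre duality you invoke therefore does not go through for general $\phi\in\mathcal{R}$.

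\textbf{What the paper does instead.} The paper never touches the full pressure in the upper bound. It exploits a consequence of $\phi\in\mathcal{R}$ you did not use: because $\phi<0$ and $\phi(x)\to-\infty$ as $x\to0$, the set $\mathcal{C}_k$ of $k$-cylinders meeting $J(\alpha,N,\varepsilon)$ is \emph{finite} for every $k$ (Lemma~3.3). One then defines $s_k$ by $\sum_{I\in\mathcal{C}_k}|I|^{s_k}=1$, builds the $T^k$-Bernoulli measure $\eta_k$ giving each $I\in\mathcal{C}_k$ mass $|I|^{s_k}$, and checks that $h(\eta_k,T^k)/\lambda(\eta_k,T^k)-s_k\to0$ while $\int\phi\,d\mu_k\in(\alpha-2\varepsilon,\alpha+2\varepsilon)$ for the induced $T$-invariant $\mu_k$. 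This gives $\dim_H J(\alpha)\le\sup_{|\gamma-\alpha|<2\varepsilon}F(\gamma)$, and a separate continuity lemma for $F$ (Lemma~3.2) lets $\varepsilon\to0$. The finiteness of $\mathcal{C}_k$ is what sidesteps the infinite-pressure problem you flagged; without it your duality route would need substantial extra work.
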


\subsection*{Proof of the lower bound}
In order to prove the lower bound first note that if $ \mu \in \mathcal{M}_T$ is ergodic and
$\int  \phi \, d \mu = \alpha$ then $\mu(J(\alpha))=1$. Moreover if $\lambda(\mu) < \infty$ then $\dim_H(\mu)=\frac{h(\mu)}{\lambda(\mu)}$ and we can conclude that
\[\dim_H(J(\alpha)) \geq \dim_H(\mu) = \frac{h(\mu)}{\lambda(\mu)}.\]
Thus we can deduce that
\begin{equation*}
\dim_H(J(\alpha)) \geq \sup \left\{ \frac{h(\mu)}{\lambda(\mu)} :\mu \in \mathcal{M}_T\text{ and ergodic }, \int  \phi \, d \mu = \alpha \textrm{ and } \lambda(\mu) < \infty \right\}.
\end{equation*}
To complete the proof of the lower bound we need the following lemma
\begin{lema}
Let $\alpha\in (-\infty, \alpha_{M})$.
If $ \mu \in \mathcal{M}_T$, $\int \phi \,  d \mu=\alpha$ and $\lambda(\mu)<\infty$ then for any $\epsilon>0$ we can find $\nu\in\mathcal{M}_T$ which is ergodic and 
\begin{enumerate}
\item
$\int\phi \,  d  \nu=\alpha$,
\item
$|h(\nu)-h(\mu)|\leq\epsilon$,
\item
$|\lambda(\nu)-\lambda(\mu)|\leq\epsilon$.
\end{enumerate}
\end{lema}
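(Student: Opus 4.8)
The plan is to manufacture $\nu$ from $\mu$ in three stages: first replace $\mu$ by a finite convex combination of ergodic measures carrying almost the same data, then push each ergodic piece onto a compact subsystem, and finally glue everything into a single ergodic measure on one compact subsystem, ending with a small adjustment that restores $\int\phi\,d\nu=\alpha$ exactly.

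\emph{Stage 1 (ergodic decomposition).} Let $\mu=\int\mu_\omega\,d\tau(\omega)$ be the ergodic decomposition. Since $\phi<0$, $\log|T'|>0$, $h(\mu)\leq\lambda(\mu)<\infty$ and $-\int\phi\,d\mu=-\alpha<\infty$, the functions $\omega\mapsto h(\mu_\omega)$, $\omega\mapsto\lambda(\mu_\omega)$ and $\omega\mapsto\int\phi\,d\mu_\omega$ all lie in $L^1(\tau)$ and are finite $\tau$-a.e. Using the affinity of $h(\cdot)$, $\lambda(\cdot)$ and $\mu\mapsto\int\phi\,d\mu$, I would pick finitely many ergodic measures $\mu_1,\dots,\mu_k$, each with finite entropy, finite Lyapunov exponent and finite $\phi$-integral, and weights $p_i>0$ with $\sum_i p_i=1$, such that $\sum_i p_i\int\phi\,d\mu_i=\alpha$ and $\big|\sum_i p_i h(\mu_i)-h(\mu)\big|,\ \big|\sum_i p_i\lambda(\mu_i)-\lambda(\mu)\big|<\epsilon/3$. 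Matching $\alpha$ exactly is the only delicate point here: it is automatic if $\int\phi\,d\mu_\omega=\alpha$ for $\tau$-a.e.\ $\omega$, and otherwise $\omega\mapsto\int\phi\,d\mu_\omega$ takes values on both sides of $\alpha$ on sets of positive $\tau$-measure, so after a preliminary finite approximation one corrects the average by transferring a little weight between a component below $\alpha$ and one above $\alpha$; the strict inequality $\alpha<\alpha_M$ guarantees a (compactly supported) ergodic measure with $\phi$-integral bounded away from $\alpha$ on the high side, which keeps this weight transfer, and hence the perturbation of the entropy and Lyapunov averages, small.

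\emph{Stage 2 (reduction to compact subsystems).} For each $i$ I would approximate the ergodic measure $\mu_i$ on $(\Sigma,\sigma)$ by an ergodic measure $\nu_i$ supported on the full shift $\{1,\dots,N_i\}^{\mathbb N}$ on a finite sub-alphabet, with $h(\nu_i),\lambda(\nu_i),\int\phi\,d\nu_i$ within a prescribed $\delta$ of $h(\mu_i),\lambda(\mu_i),\int\phi\,d\mu_i$. Because $\mu_i\big(\bigcup_{j>N}C_j\big)\to 0$ and the tails $\sum_{j>N}\int_{C_j}|\phi|\,d\mu_i$ and $\sum_{j>N}\int_{C_j}\log|T'|\,d\mu_i$ tend to $0$ (the two integrals being finite by Stage 1), the first $N$ coordinates of $\mu_i$ carry almost all of its entropy — via Shannon--McMillan--Breiman or a Katok-type entropy argument — and almost all of both integrals; concatenating long $\mu_i$-generic words in the letters $\leq N$ into periodic points of the compact topologically mixing SFT $\{1,\dots,N\}^{\mathbb N}$ and passing to empirical measures produces $\nu_i$. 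Setting $N:=\max_i N_i$, all $\nu_i$ live on the single compact mixing SFT $\Sigma_N:=\{1,\dots,N\}^{\mathbb N}$, on which $\phi\circ\pi$ and $\log|T'|\circ\pi$ are bounded since the relevant cylinders are bounded away from $0$.

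\emph{Stage 3 (gluing and final adjustment).} On the compact mixing SFT $\Sigma_N$ the specification property holds, so the finite convex combination $\sum_i p_i'\nu_i$ — with weights $p_i'$ close to the $p_i$, chosen so that $\sum_i p_i'\int\phi\,d\nu_i$ agrees with $\alpha$ up to a small error — can be realised, up to arbitrarily small error in entropy and in the two integrals, by a single ergodic measure $\nu'$ on $\Sigma_N$, built by concatenating $\nu_i$-generic blocks in the right proportions (or by invoking the conditional variational principle on the compact system). Choosing all the errors in Stages 1--3 small enough yields $|h(\nu')-h(\mu)|<\epsilon$, $|\lambda(\nu')-\lambda(\mu)|<\epsilon$ and $\big|\int\phi\,d\nu'-\alpha\big|$ as small as we please. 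To replace this approximate equality by $\int\phi\,d\nu=\alpha$, fix a finite-block potential $\psi_0$ on $\Sigma_N$ whose (unique) equilibrium state $\nu_0$ is close to $\nu'$ in entropy and in both integrals, and consider the family $\nu_s$ of equilibrium states of $\psi_0+s\phi$; since $\phi$ is not cohomologous to a constant on $\Sigma_N$ — a degeneracy removed by enlarging $N$, the case where $\phi$ is cohomologous to a constant on all of $\Sigma$ being vacuous as then $\alpha_m=\alpha_M$ — the map $s\mapsto\int\phi\,d\nu_s$ is continuous and strictly increasing, so a short intermediate value argument, using $\alpha<\alpha_M$ to place $\alpha$ in its range on a large enough $\Sigma_N$, gives $s^\ast$ near $0$ with $\int\phi\,d\nu_{s^\ast}=\alpha$. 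By continuity of $s\mapsto h(\nu_s)$ and $s\mapsto\lambda(\nu_s)$, the measure $\nu:=\nu_{s^\ast}$ still satisfies $|h(\nu)-h(\mu)|\leq\epsilon$ and $|\lambda(\nu)-\lambda(\mu)|\leq\epsilon$, and being the equilibrium state of a summable-variation potential on a compact mixing SFT it is ergodic with $\lambda(\nu)<\infty$. I expect the main obstacle to be Stage 2: since entropy is only lower semicontinuous on the non-compact shift it can a priori escape to infinity, and $\phi$ and $\log|T'|$ are unbounded, so controlling all three quantities while truncating to a finite alphabet is precisely where the non-compactness of $\Lambda$ bites and where the hypotheses $\phi<0$, $\lim_{x\to 0}\phi(x)=-\infty$ and summable variation are used.
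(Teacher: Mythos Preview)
Your overall strategy coincides with the paper's: push the problem onto a compact (finite-alphabet) subsystem and then manufacture the ergodic measure there. The paper's proof, however, is a two-line reduction: it asserts (without further argument) that one can find invariant measures $\mu_n$ supported on finite subsystems with $\int\phi\,d\mu_n=\alpha$, $h(\mu_n)\to h(\mu)$ and $\lambda(\mu_n)\to\lambda(\mu)$, and then invokes Lemmas~2 and~3 of \cite{jjop} to upgrade $\mu_n$ to an ergodic measure on that finite subsystem with the same $\phi$-integral and nearby entropy and Lyapunov exponent. Your Stages~1--2 (ergodic decomposition followed by alphabet truncation of each ergodic component) are a concrete realisation of the paper's unstated first step, and your Stage~3 (specification gluing plus the equilibrium-state family $s\mapsto\nu_s$ to hit $\alpha$ exactly) is a self-contained substitute for the \cite{jjop} citation. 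What you gain is independence from that reference and an explicit mechanism for matching $\alpha$; what you pay is length, and your Stage~2 sketch --- controlling entropy under truncation to letters $\leq N$ via Shannon--McMillan--Breiman/Katok --- is exactly the place a full write-up would need to be tightened, as you yourself flag.
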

\begin{proof}
Let $ \mu \in \mathcal{M}_T$, $\int \phi \,  d \mu=\alpha$ and $\lambda(\mu)<\infty$. We can then find a sequence of invariant measures $\{  \mu_n \}$ supported on finite subsystems such that $\int\phi \,  d \mu_n=\alpha$, $\lim_{n\rightarrow\infty}\lambda(\mu_n)=\lambda(\mu)$ and $\lim_{n\rightarrow\infty} h(\mu_n)=h(\mu)$. Since these measures are supported on finite subsystems we can apply Lemma 2 and Lemma 3 from \cite{jjop} to complete the proof.  
\end{proof} 
We can now immediately deduce that
\begin{eqnarray*}
 \sup \left\{ \frac{h(\mu)}{\lambda(\mu)} :\mu \in \mathcal{M}_T, \int  \phi \, d \mu = \alpha \textrm{ and } \lambda(\mu) < \infty \right\} =\\
  \sup \left\{ \frac{h(\mu)}{\lambda(\mu)} :\mu \in \mathcal{M}_T \textrm{ and ergodic } , \int  \phi \, d \mu = \alpha \textrm{ and } \lambda(\mu) < \infty \right\},
  \end{eqnarray*}
 which completes the proof of the lower bound.
  \subsection{Upper bound}
In this section we prove the upper bound of our main result. We adapt to our setting the method used in \cite{jjop}.

\begin{lema}\label{continuity}
The function
\begin{equation*}
F(\alpha):=\sup \left\{ \frac{h(\mu)}{\lambda(\mu)} :\mu \in \mathcal{M}_T, \int  \phi \, d \mu = \alpha \textrm{ and } \lambda(\mu) < \infty \right\}
\end{equation*}
is continuous in the domain $(-\infty,\alpha_M)$.
\end{lema}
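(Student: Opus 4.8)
The plan is to establish continuity at an arbitrary $\alpha_0\in(-\infty,\alpha_M)$ by proving the two one‑sided estimates $\liminf_{\alpha\to\alpha_0}F(\alpha)\ge F(\alpha_0)$ and $\limsup_{\alpha\to\alpha_0}F(\alpha)\le F(\alpha_0)$, both via one and the same perturbation device: take a measure which is (nearly) optimal for the relevant value of $F$ and mix it, in a small proportion, with a fixed auxiliary measure whose $\phi$-average lies on the appropriate side of $\alpha_0$, the proportion being chosen so that the mixture has $\phi$-average exactly the target value. Three facts are used throughout: that $h$, $\lambda$ and $\nu\mapsto\int\phi\,d\nu$ are affine on $\mathcal{M}_T$; that $h(\nu)\le\lambda(\nu)$ by Ruelle's inequality, so admissible ratios lie in $[0,1]$; and, crucially, that the expansion hypothesis~(2) forces $\lambda(\nu)\ge\log\xi>0$ for \emph{every} $\nu\in\mathcal{M}_T$, so all the relevant denominators are bounded below by a fixed positive constant.

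First I would fix the two auxiliary measures. Each branch $T|_{I_n}$ maps onto $I$, hence has a fixed point $p_n\in I_n$; since $p_n\to 0$ we have $\phi(p_n)\to-\infty$, so the Dirac masses $\delta_{p_n}$ satisfy $\int\phi\,d\delta_{p_n}=\phi(p_n)\to-\infty$ with $h(\delta_{p_n})=0$ and $\lambda(\delta_{p_n})<\infty$ (in particular $\alpha_m=-\infty$). Fix one of these, call it $\rho_-$, with $\int\phi\,d\rho_-<\alpha_0$. On the other side, using $\alpha_0<\alpha_M=\sup_{\nu\in\mathcal{M}_T}\int\phi\,d\nu$ together with the finite‑subsystem approximation employed in the proof of the lower bound, fix a measure $\rho_+$ supported on a finite subsystem with $\alpha_0<\int\phi\,d\rho_+$; then automatically $h(\rho_+)<\infty$ and $\lambda(\rho_+)<\infty$.

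For the liminf estimate, fix $\epsilon>0$ and a measure $\mu$ with $\int\phi\,d\mu=\alpha_0$, $\lambda(\mu)<\infty$ and $h(\mu)/\lambda(\mu)>F(\alpha_0)-\epsilon$. For $\alpha'$ near $\alpha_0$ set $\rho=\rho_+$ if $\alpha'>\alpha_0$ and $\rho=\rho_-$ if $\alpha'<\alpha_0$, put $s=(\alpha'-\alpha_0)/(\int\phi\,d\rho-\alpha_0)\in(0,1)$ and $\mu'=(1-s)\mu+s\rho$; then $\int\phi\,d\mu'=\alpha'$, $\lambda(\mu')<\infty$, and $s\to0$ as $\alpha'\to\alpha_0$. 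By affineness and $\lambda(\mu')\ge\log\xi$ a one‑line computation shows that $|h(\mu')/\lambda(\mu')-h(\mu)/\lambda(\mu)|$ tends to $0$ as $s\to0$, so $F(\alpha')\ge h(\mu')/\lambda(\mu')\to h(\mu)/\lambda(\mu)>F(\alpha_0)-\epsilon$; letting $\alpha'\to\alpha_0$ and then $\epsilon\to0$ gives the estimate. For the limsup estimate, choose $\alpha_n\to\alpha_0$ with $\alpha_n\ne\alpha_0$ and $F(\alpha_n)\to L:=\limsup_{\alpha\to\alpha_0}F(\alpha)$, and measures $\mu_n$ with $\int\phi\,d\mu_n=\alpha_n$, $\lambda(\mu_n)<\infty$ and $r_n:=h(\mu_n)/\lambda(\mu_n)>F(\alpha_n)-1/n$, so $r_n\to L$. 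Mixing $\mu_n$ with the appropriate $\rho_\pm$ in the unique proportion $s_n\in(0,1)$, with $s_n\to0$, that makes the mixture have $\phi$-average $\alpha_0$, the very same computation — now using only that the Lyapunov exponent of the mixture is $\ge\log\xi$, and \emph{no} bound on $\lambda(\mu_n)$ — shows that the ratio of the mixture equals $r_n+o(1)\to L$. Since these mixtures are admissible for $F(\alpha_0)$, we conclude $F(\alpha_0)\ge L$. The two estimates together give continuity at $\alpha_0$.

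The one genuinely delicate point is the one just emphasised in the limsup step: because $F$ is a supremum over the non‑compact space $\mathcal{M}_T$, the near‑optimal measures $\mu_n$ may well have $\lambda(\mu_n)\to\infty$ (this really occurs — compare Theorem~\ref{analytic}(3)), so the naive reasoning that a bounded perturbation $s_n$ of a fixed measure alters the ratio by $o(1)$ is \emph{not} available, the perturbation being small while $\lambda(\mu_n)$ is large. It is precisely the uniform lower bound $\lambda(\nu)\ge\log\xi$ coming from the expansion hypothesis that forces the correction term to vanish irrespective of the size of $\lambda(\mu_n)$; this is the key adaptation of the method of \cite{jjop} to the countable‑branch setting. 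The remaining ingredients — affineness of $h$ and $\lambda$, Ruelle's inequality, and the construction of the auxiliary measure $\rho_+$ — are routine.
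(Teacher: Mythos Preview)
Your proof is correct and follows essentially the same strategy as the paper's: both directions are handled by mixing near-optimal measures with fixed auxiliary measures $\rho_\pm$ (the paper's $\overline{\mu},\underline{\mu}$) so as to hit the target $\phi$-average, and then checking that the ratio $h/\lambda$ of the mixture differs from the original by $o(1)$. Your write-up is in fact more careful than the paper's on two points it leaves implicit: the explicit construction of $\rho_-$ as a Dirac mass on a fixed point (and of $\rho_+$ via finite subsystems), and the observation that the uniform bound $\lambda(\nu)\ge\log\xi$ together with $h(\mu_n)/\lambda(\mu_n)\le 1$ is exactly what makes the correction $O(s_n)$ uniformly in $\lambda(\mu_n)$ in the $\limsup$ step.
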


\begin{proof}
 Let $\{ \mu_n \}$ be a sequence of measures in $\mathcal{M}_T$ satisfying $\lambda(\mu_n)<\infty$ and converging to a measure $\mu$ where $\int \phi \,  d \mu=\alpha$. Let $\overline{\mu},\underline{\mu}\in {M}_T$ such that 
$$\int\phi \,  d \underline{\mu}<\alpha<\int\phi \,  d \overline{\mu}$$
and $\lambda(\overline{\mu}),\lambda(\overline{\mu})<\infty$. By considering convex combinations of $\mu_n$ with $\overline{\mu}$ or $\underline{\mu}$ we can find a sequence of measures $\nu_n$ where $\int \phi \,  d \nu_n=\alpha$ for each $n$ and
$$\lim_{n\rightarrow\infty}\left|\frac{h(\mu_n)}{\lambda(\mu_n)}-\frac{h(\nu_n)}{\lambda(\nu_n)}\right|=0.$$
It then follows that
\[ F(\alpha) \geq \limsup_{n \to \infty} F(\alpha_n).\]
In the other direction we fix $\mu, \nu\in\mathcal{M}_T$ with $\int\phi \,  d \nu=\beta<\alpha=\int\phi \,  d \mu$. Let $\nu_p=p\nu+(1-p)\mu$ and note that 
$$\liminf_{x\rightarrow\alpha^{-}} F(x)\geq\lim_{p\rightarrow 0} \frac{h(\nu_p)}{\lambda(\nu_p)}=\frac{h(\mu)}{\lambda(\mu)}$$
and
$$\liminf_{x\rightarrow\beta^{+}} F(x)\geq\lim_{p\rightarrow 1} \frac{h(\nu_p)}{\lambda(\nu_p)}=\frac{h(\nu)}{\lambda(\nu)}.$$
We can use this to deduce that 
$$ F(\alpha) \leq \liminf_{n \to \infty} F(\alpha_n).$$
\end{proof}

Denote by $S_k \phi(x):= \sum_{i=0}^{k-1} \phi(T^i x)$.
Let $\alpha \in \R, N \in \N$ and $ \epsilon>0$ and consider the following set,
\begin{equation} \label{jotas}
J(\alpha, N, \epsilon):= \left\{x \in \Lambda: \frac{S_k \phi(x)}{k} \in \left(\alpha - \epsilon, \alpha + \epsilon \right), \textrm{ for every } k \geq N   \right\}.
\end{equation}

Note that
\begin{equation*}
J(\alpha) \subset \bigcup_{N=1}^{\infty} J(\alpha, N, \epsilon).
\end{equation*}

In order to obtain an upper bound on the dimension of $J(\alpha)$ we will compute upper bounds on the dimension of $J(\alpha, N, \epsilon)$. Denote by $\mathcal{C}_k$ the cover of $J(\alpha, N, \epsilon)$ by cylinders of length $k \in \N$, that is
\[\mathcal{C}_k:= \left\{ I(i_1, \dots, i_{k}) :  I(i_1, \dots, i_{k}) \cap  J(\alpha, N, \epsilon)  \neq \emptyset \right\}. \]

\begin{lema}
For every $k \in \N$ the cardinality of $\mathcal{C}_k$ is finite.
\end{lema}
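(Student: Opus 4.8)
The plan is to exploit the key structural fact about $J(\alpha,N,\epsilon)$: on every cylinder $I(i_1,\dots,i_k)$ meeting this set with $k\geq N$, the Birkhoff sum $S_k\phi$ is pinched into the interval $(k(\alpha-\epsilon),k(\alpha+\epsilon))$. Since $\phi\in\mathcal{R}$ satisfies $\lim_{x\to 0}\phi(x)=-\infty$, and $0$ is the only accumulation point of the endpoints, for any bound $M$ there are only finitely many indices $i$ with $\sup_{x\in I(i)}\phi(x)>-M$; equivalently, the sets $\{I(i): \phi|_{I(i)}\geq -M\}$ are finite. I would first record this observation as the engine of the argument.

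Next I would argue by contradiction. Fix $k\geq N$ and suppose $\mathcal{C}_k$ is infinite. Then there are infinitely many cylinders $I(i_1,\dots,i_k)$ meeting $J(\alpha,N,\epsilon)$, hence (by a pigeonhole / diagonal argument over the finitely-many-choices-at-bounded-level fact) there is some coordinate position $1\leq j\leq k$ along which the symbol $i_j$ ranges over an infinite set while the first coordinate $i_1$ could even be taken fixed. More carefully: since at each coordinate only finitely many symbols give $\phi$ bounded below by a fixed constant, an infinite family of admissible length-$k$ cylinders must contain points $x$ on which some $\phi(T^{j-1}x)$ is arbitrarily negative, say $\phi(T^{j-1}x)\to-\infty$. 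Because $\phi<0$ everywhere, all the other terms $\phi(T^{i}x)$ with $i\neq j-1$ are negative, so $S_k\phi(x)\leq \phi(T^{j-1}x)\to-\infty$. But $x\in J(\alpha,N,\epsilon)$ and $k\geq N$ forces $S_k\phi(x)>k(\alpha-\epsilon)>-\infty$, a fixed finite lower bound independent of the cylinder. This contradiction shows $\mathcal{C}_k$ is finite.

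The main obstacle is making the pigeonhole step fully rigorous: from "infinitely many admissible length-$k$ words all hitting $J(\alpha,N,\epsilon)$" one must extract a single point (or a sequence of points, one per cylinder) whose Birkhoff sum tends to $-\infty$. The clean way is to enumerate: for each $m\in\N$ let $A_m=\{i\in\N: \sup_{x\in I(i)}\phi(x)\geq -m\}$, which is finite by the remark on $\mathcal{R}$ and the accumulation-point hypothesis. An admissible length-$k$ word $(i_1,\dots,i_k)$ with all $i_j\in A_m$ belongs to a finite set (at most $(\#A_m)^k$ such words). So if $\mathcal{C}_k$ is infinite, for every $m$ there is a cylinder in $\mathcal{C}_k$ with some coordinate $i_j\notin A_m$; pick $x_m$ in the intersection of that cylinder with $J(\alpha,N,\epsilon)$. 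Then there is a coordinate $j(m)\in\{1,\dots,k\}$ with $\phi(T^{j(m)-1}x_m)<-m$, and since $k$ is fixed some value $j_0$ recurs infinitely often, giving $\phi(T^{j_0-1}x_{m})<-m$ along a subsequence; combined with $\phi<0$ this yields $S_k\phi(x_m)<-m$ along that subsequence, contradicting $S_k\phi(x_m)>k(\alpha-\epsilon)$. This completes the proof.

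Remark: the hypothesis $\alpha<\alpha_M$ plays no role here; only $k\geq N$, the sign condition $\phi<0$, and $\phi(x)\to-\infty$ as $x\to 0$ (equivalently, the finiteness of each $A_m$) are used. I would state the lemma's proof in exactly this order — finiteness of $A_m$, the counting bound, the contradiction via an unbounded coordinate — keeping the computation to the one-line estimate $S_k\phi(x_m)\leq\phi(T^{j_0-1}x_m)<-m$.
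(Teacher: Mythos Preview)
Your argument is correct and uses the same core idea as the paper: since $\phi<0$ and $\lim_{x\to 0}\phi(x)=-\infty$, only finitely many first-level cylinders $I(j)$ have $\sup_{I(j)}\phi$ above a given threshold, so any cylinder in $\mathcal{C}_k$ must have all coordinates drawn from that finite set (otherwise a single term already forces $S_k\phi$ below $k(\alpha-\epsilon)$). The paper presents this directly---fixing the threshold once and concluding each $i_l<i$---rather than by contradiction, and your pigeonhole extraction of a recurring coordinate $j_0$ is unnecessary: the estimate $S_k\phi(x_m)\leq\phi(T^{j(m)-1}x_m)<-m$ already yields the contradiction for whichever $j(m)$ arises.
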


\begin{proof}
Since $\phi \in \mathcal{R}$ we can deduce that $\lim_{i\rightarrow\infty}\inf_{x\in I(i)} \phi(x)=-\infty$ and hence we can find an $i\in\N$ such that for all $x\in I(j)$ with $j\geq i$ we have that $|\phi(x)|>k(\alpha+\epsilon)$. It then follows that $C_k$ only contains cylinders $I(i_1,\ldots,i_k)$ where each $i_l<i$. There is clearly only a finite number of such cylinders. 
\end{proof}

Let $s_k \in \R$ denote the unique real number such that
\begin{equation*}
\sum_{I(i_1, \dots i_k) \in \mathcal{C}_k} | I(i_1, \dots i_k) |^{s_k} = 1.
\end{equation*}
We define the following number:
\begin{equation}
s:= \limsup_{k \to \infty} s_k
\end{equation}

\begin{lema}\label{cover}
The following bound holds,
\[\dim_H (J(\alpha, N, \epsilon)) \leq s, \]
and there exists a sequence of $T-$invariant probability measures $\{\mu_k\}$ such that
\[ \lim_{k \to \infty} \left( s_k - \frac{h(\mu_k)}{\lambda(\mu_k)}             \right)    =0           \]
and $\int\phi\, d \mu_k\in  (\alpha-2\epsilon,\alpha+2\epsilon)$.
\end{lema}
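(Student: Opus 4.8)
The plan is to prove the two assertions in turn, both resting on the bounded distortion supplied by the Renyi condition (part (4) of Definition \ref{maps}). First I would record the two standard distortion estimates. By the Renyi condition and the mean value theorem there is a constant $D\geq 1$, depending only on $T$, with
\[
D^{-1}\leq |I(i_1,\dots,i_k)|\cdot|(T^k)'(\pi x)|\leq D\qquad\text{for all }x\in C_{i_1\cdots i_k},
\]
and, since $\phi\circ\pi$ and $\log|T'|\circ\pi$ have summable variation, the Birkhoff sums $S_k\phi$ and $S_k\log|T'|$ oscillate by at most a constant independent of $k$ across any cylinder of length $k$. Combining the first estimate with part (2) of Definition \ref{maps} gives $|I(i_1,\dots,i_k)|<\xi^{-k}$ once $k\geq N$, so $\mathcal{C}_k$ is a $\xi^{-k}$-cover of $J(\alpha,N,\epsilon)$ whose mesh shrinks to $0$. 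For the dimension bound I would then fix $t>s$; since $s=\limsup_k s_k$ there are $\delta>0$ and $k_0$ with $s_k\leq t-\delta$ for $k\geq k_0$, whence, using $|I|<\xi^{-k}<1$ and $\sum_{I\in\mathcal{C}_k}|I|^{s_k}=1$,
\[
H^t_{\xi^{-k}}(J(\alpha,N,\epsilon))\leq\sum_{I\in\mathcal{C}_k}|I|^{t}\leq\xi^{-k(t-s_k)}\sum_{I\in\mathcal{C}_k}|I|^{s_k}\leq\xi^{-k\delta}\longrightarrow 0
\]
as $k\to\infty$. Hence $H^t(J(\alpha,N,\epsilon))=0$ and $\dim_H(J(\alpha,N,\epsilon))\leq t$, and letting $t\downarrow s$ yields the first claim.

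For the measures, for each $k\geq\max\{N,k_0\}$ I would set $p_w:=|I(w)|^{s_k}$ for the (finitely many) $w$ with $I(w)\in\mathcal{C}_k$; these sum to $1$ by the definition of $s_k$, and since the $I(w)$ are pairwise interior-disjoint subintervals of $[0,1]$ of length $<1$ one also gets $0\leq s_k\leq 1$. Let $\tilde\mu_k$ be the Bernoulli measure on $\Sigma$ giving mass $p_{w^{(1)}}\cdots p_{w^{(n)}}$ to each cylinder formed by concatenating length-$k$ words $w^{(j)}$ with $I(w^{(j)})\in\mathcal{C}_k$, and put $\mu_k:=\frac1k\sum_{j=0}^{k-1}(\sigma^j)_*\tilde\mu_k$, which is $\sigma$-invariant and so corresponds to a $T$-invariant probability measure, again denoted $\mu_k$; it is supported on a finite subsystem, so $h(\mu_k)<\infty$, and by the distortion estimate $\log|T'|$ is bounded there, so $0<\lambda(\mu_k)<\infty$. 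A standard computation for $k$-block constructions (affinity of entropy together with $h_{\sigma^k}=k\,h_\sigma$) gives
\[
h(\mu_k)=\tfrac1k\Bigl(-\sum_w p_w\log p_w\Bigr)=\tfrac{s_k}{k}\,\Sigma_k,\qquad \Sigma_k:=\sum_w|I(w)|^{s_k}\bigl(-\log|I(w)|\bigr),
\]
while $\lambda(\mu_k)=\tfrac1k\int S_k\log|T'|\,d\tilde\mu_k=\tfrac1k(\Sigma_k+O(1))$ by the oscillation bound. Since $|I(w)|<\xi^{-k}$ forces $\Sigma_k>k\log\xi\to\infty$ and $s_k\leq 1$,
\[
s_k-\frac{h(\mu_k)}{\lambda(\mu_k)}=s_k-\frac{s_k\Sigma_k}{\Sigma_k+O(1)}=\frac{s_k\cdot O(1)}{\Sigma_k+O(1)}=O\!\left(\frac1k\right)\longrightarrow 0.
\]
Finally, choosing $x_w\in I(w)\cap J(\alpha,N,\epsilon)$ for each $w$, the definition of $J(\alpha,N,\epsilon)$ gives $S_k\phi(x_w)\in k(\alpha-\epsilon,\alpha+\epsilon)$ for $k\geq N$, and the oscillation bound gives $\int S_k\phi\,d\tilde\mu_k=\sum_w p_w S_k\phi(x_w)+O(1)$; dividing by $k$ shows $\int\phi\,d\mu_k\in(\alpha-\epsilon,\alpha+\epsilon)+O(1/k)\subset(\alpha-2\epsilon,\alpha+2\epsilon)$ for all sufficiently large $k$, the finitely many smaller $k$ being discarded.

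The genuinely technical point, and the one I expect to be the main obstacle, is the uniform bounded-distortion package: that the multiplicative distortion of cylinder lengths and the additive distortion of $S_k\phi$ and $S_k\log|T'|$ are controlled by constants independent of both $k$ and the cylinder, in particular uniformly as the cylinders shrink towards $0$. This is precisely what lets the $O(1)$ errors be absorbed by $\Sigma_k\to\infty$, forcing $h(\mu_k)/\lambda(\mu_k)\to s_k$, and it must be extracted carefully from the Renyi condition; it is also why $\mu_k$ is taken supported on a finite subsystem, so that $\phi$ and $\log|T'|$ are bounded there. A secondary care point is that $\tilde\mu_k$ is only $\sigma^k$-invariant, so the symmetrisation over $\sigma^0,\dots,\sigma^{k-1}$ and the Abramov identity must be invoked in the right order.
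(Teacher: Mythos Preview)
Your proposal is correct and follows essentially the same approach as the paper: both prove the dimension bound via the covers $\mathcal{C}_k$ and the defining relation $\sum_{I\in\mathcal{C}_k}|I|^{s_k}=1$, and both construct $\mu_k$ by taking the $T^k$-invariant Bernoulli measure with weights $|I(w)|^{s_k}$ and then averaging over $\sigma^0,\dots,\sigma^{k-1}$, using bounded distortion to compare $h(\mu_k)$ and $s_k\lambda(\mu_k)$ up to an $O(1)$ error absorbed by $\Sigma_k\gtrsim k\log\xi\to\infty$. Your write-up is in fact more explicit than the paper's about the distortion constants, the bound $0\le s_k\le 1$, and the use of Abramov's formula in the symmetrisation step, all of which are exactly the points the paper leaves implicit.
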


\begin{proof}
To see that $\dim_H (J(\alpha, N, \epsilon)) \leq s, $ we note that for $k$ sufficiently large and $\epsilon>0$
$$H_{\xi^k}^{s+\epsilon}(J(\alpha, N, \epsilon))\leq \sum_{I(i_1, \dots i_k) \in \mathcal{C}_k} | I(i_1, \dots i_k) |^{s+\epsilon}\leq 1.$$
This means that $H^{s+\epsilon}(J(\alpha, N, \epsilon))\leq 1$ and so $\dim J(\alpha, N, \epsilon)\leq s+\epsilon$.

For the second part let $\eta_k$ be the $T^k$-invariant Bernoulli measure which assigns each cylinder in $C_k$, denoted by $I(i_1,\ldots,i_k)$, the probability $|I(i_1,\ldots,i_k)|^{s_k}$. Note that the entropy of this measure with respect to $T^k$ will be
$$h(\eta_k,T^k)=-s_k\sum_{I(i_1,\ldots,i_k)\in C_k} |I(i_1,\ldots,i_k)|^{s_k}\log |I(i_1,\ldots,i_k)|$$ 
and there will exist $C>0$ such  that for all $k\in\N$ the Lyapunov exponent $\lambda(\eta_k, T^{k+1})$ satisfies 
$$\left|-\lambda(\eta_k,T_k)-\sum_{I(i_1,\ldots,i_k)\in C_k} |I(i_1,\ldots,i_k)|^{s_k}\log |I(i_1,\ldots,i_k)|\right|\leq C.$$
This then gives that
$$\frac{s_k(\lambda(\eta_k,T^k)-C)}{\lambda(\eta_k,T^k)}\leq\frac{h(\mu,T^k)}{\lambda(\eta_k,T^k)}\leq\frac{s_k(\lambda(\eta_k,T^k)+C)}{\lambda(\eta_k,T^k)}$$
and since $\lambda(\eta_k,T^k)\geq\xi^{k}$ it follows that $\lim_{k\rightarrow\infty}\frac{h(\eta_k,T^k)}{\lambda(\eta_k,T^k)}-s_k=0$. Moreover, for $k$ sufficiently large each cylinder in $C_k$ will only contain points $x$ where $S_k\phi(x)\in (\alpha-2\epsilon,\alpha+2\epsilon)$. This means that $\int\frac{S_k\phi}{k}\, d \eta_k\in (\alpha-2\epsilon,\alpha+2\epsilon)$. To complete the proof we simply let $\mu_k=\sum_{i=0}^{k-1}\eta_k\circ T^{-i}$.  
\end{proof}
Thus, we can deduce that 
$$\dim_H J(\alpha)\leq\lim_{\epsilon\rightarrow 0}\sup_{\gamma\in (\alpha-\epsilon,\alpha+\epsilon)} F(\gamma).$$ 
The fact that
$$\dim_H J(\alpha)\leq F(\alpha)$$
now follows by Lemma \ref{continuity}. This completes the proof of Theorem \ref{main'}.

\begin{rem}
It is a direct consequence of the work of Barreira and Schmeling \cite{b.s} together with the approximation property of the pressure (Proposition  \ref{app}) that the irregular set has full Hausdorff dimension,
\begin{equation*}
\dim_H J'= \dim_H \Lambda.
\end{equation*}
\end{rem}

\section{Regularity of the multifractal spectrum} \label{R}
This section is devoted to the study of the regularity properties of the multifractal spectrum.  We relate the conditional variational principle to thermodynamic properties and as a result prove Theorem \ref{analytic} . Our proof is based on ideas developed by Barreira and Saussol  \cite{bs1} in the uniformly hyperbolic  (Markov with finitely many branches) setting. Nevertheless, most of their arguments can not be translated into the non-compact (Markov with countably many branches) setting. It should be pointed out that the behaviour of the multifractal spectrum in this setting is much richer than in the compact setting. New phenomena occurs, in particular the multifractal spectrum can be constant and  it can have points where it is not analytic. We obtain conditions ensuring these new phenomena to happen.

The following Lemma is a direct consequence of results by Mauldin and Urba\'nski \cite{mu}, Sairg \cite{sa2} and Stratmann and Urba\'nski \cite{su}. We will use it to deduce certain regularity properties of the multifractal spectrum. Throughout this section we will let $\phi\in\mathcal{R}$ and $\alpha_M$ to be as in the introduction. Some of the results will need additionally that $\phi\in\bar{\mathcal{R}}$.
\begin{prop}[Regularity] \label{r}
If $\phi\in\bar{\mathcal{R}}$, $\delta \in (0,1]$ and $\alpha \in (-\infty, \alpha_M)$ then the function
\[q \mapsto P( q(\phi - \alpha) - \delta \log|T'|), \]
when finite is real analytic, and in this case
\begin{equation}
\frac{d}{dq}  P( q(\phi - \alpha) - \delta \log|T'|) \Big|_{q=q_0}= \int \phi \, d \mu_{q_0, \delta}  - \alpha,
\end{equation}
where $\mu_{q_0, \delta}$ is the equilibrium state of  the potential $ q_0(\phi - \alpha) - \delta \log|T'|$.
\end{prop}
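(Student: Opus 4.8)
The plan is to deduce Proposition \ref{r} from the general theory of countable Markov shifts recalled in Subsection \ref{CMS}, applied to the potential $\psi_q:=q(\phi-\alpha)-\delta\log|T'|$ on the symbolic space $\Sigma$ via the coding $\pi$. First I would check that $\psi_q$ falls within the scope of that theory: since $\phi\in\bar{\mathcal R}$ the function $\phi\circ\pi$ is weakly H\"older, and $-\log|T'|\circ\pi$ is also weakly H\"older (this is essentially the Renyi condition, which controls the distortion of $T'$ on cylinders; it gives a geometric bound on $V_n(\log|T'|\circ\pi)$), so $\psi_q\circ\pi$ is weakly H\"older for every $q\in\R$, and in particular has summable variation. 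Thus whenever $P(\psi_q)<\infty$ there is a unique equilibrium state $\mu_{q,\delta}$ and a unique Gibbs measure, and these coincide.

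Next I would handle the analyticity in $q$. The clean way is to observe that $\psi_q = q\phi_0 - \delta\log|T'| - q\alpha$ where $\phi_0:=\phi$, so the pressure as a function of $q$ is (up to the affine term $-q\alpha$, which only shifts things and preserves analyticity) the pressure along a real-analytic one-parameter family of weakly H\"older potentials. The statement that $q\mapsto P(q\phi-\delta\log|T'|)$ is real analytic on the open set where it is finite is exactly the kind of regularity result proved by Mauldin--Urba\'nski \cite{mu}, Sarig \cite{sa2} and Stratmann--Urba\'nski \cite{su} for countable Markov shifts: one uses the analytic perturbation theory of the Ruelle transfer operator acting on a suitable function space, where the leading eigenvalue $e^{P(\psi_q)}$ depends analytically on the parameter as long as it stays isolated, which is guaranteed by a spectral gap that persists on the region of finite pressure. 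I would cite these works for the analyticity rather than reprove the transfer-operator estimates.

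Finally, for the derivative formula I would use the standard fact that, for an analytic family $\psi_q$ with finite pressure, $\frac{d}{dq}P(\psi_q)=\int \frac{\partial\psi_q}{\partial q}\,d\mu_{q}$, where $\mu_q$ is the equilibrium state of $\psi_q$. Here $\frac{\partial}{\partial q}\psi_q = \phi-\alpha$, so $\frac{d}{dq}P(\psi_q)\big|_{q=q_0}=\int(\phi-\alpha)\,d\mu_{q_0,\delta}=\int\phi\,d\mu_{q_0,\delta}-\alpha$, which is the claimed identity. The derivative formula itself follows from convexity of $q\mapsto P(\psi_q)$ together with the variational principle (the one-sided inequalities $P(\psi_{q_0+t})\ge h(\mu_{q_0,\delta})+\int\psi_{q_0+t}\,d\mu_{q_0,\delta}=P(\psi_{q_0})+t\int(\phi-\alpha)\,d\mu_{q_0,\delta}$ and its counterpart at $q_0$ pin the derivative down), provided one knows $\int\phi\,d\mu_{q_0,\delta}$ is finite, which holds because the Gibbs property of $\mu_{q_0,\delta}$ together with $P(\psi_{q_0})<\infty$ and the weak H\"older property force exponential decay of the measure of cylinders and hence integrability of the (at most logarithmically growing, in the relevant sense) functions involved.

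The main obstacle is not any single estimate but the verification that the non-compact setting genuinely supports these conclusions: specifically, confirming that $P(\psi_q)<\infty$ defines an \emph{open} set of $q$'s on which the transfer operator has the required spectral properties, and that on this set the equilibrium state is Gibbs with exponential tails so that $\int\phi\,d\mu_{q,\delta}$ is finite and the differentiation-under-the-pressure argument is valid. Since all of these are contained in or immediate from \cite{mu,sa2,su}, the proof is essentially a matter of assembling the right citations and checking the weak H\"older hypothesis for $\psi_q\circ\pi$; I would write it as such rather than reconstructing the spectral theory.
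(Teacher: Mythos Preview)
Your proposal is correct and matches the paper's own treatment: the paper states this proposition as ``a direct consequence of results by Mauldin and Urba\'nski \cite{mu}, Sarig \cite{sa2} and Stratmann and Urba\'nski \cite{su}'' and gives no further proof. You go slightly further by spelling out why $\psi_q\circ\pi$ is weakly H\"older (via the Renyi condition for $\log|T'|$) and sketching the derivative formula from the variational principle, but the core strategy---cite \cite{mu,sa2,su} for the transfer-operator analyticity and read off Ruelle's derivative formula---is identical.
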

For $\alpha\in (-\infty,\alpha_M)$ we will let
$$\delta(\alpha)=\sup \left\{ \frac{h(\mu)}{\lambda(\mu)} :\mu \in \mathcal{M}_T, \int  \phi \, d \mu = \alpha \textrm{ and } \lambda(\mu) < \infty \right\}.$$
We wish to relate $\delta(\alpha)$ to the function $q \mapsto P( q(\phi - \alpha) - \delta \log|T'|)$. To do this we introduce the value $\delta^*$ which is defined by
$$\delta^*:=\inf \left\{\delta\in [0,1]:P(q\phi-\delta\log |T'|)<\infty\text{ for some }q> 0 \right\}.$$
This quantity will alway give a lower bound for $\delta(\alpha)$.

\begin{lema}\label{lb}
For all $\alpha\in (-\infty,\alpha_M)$ we have that $\delta(\alpha)\geq\delta^*$. 
\end{lema}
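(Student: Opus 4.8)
The plan is to show that $\delta^*$ is realized (or approximated) by the dimension of a measure whose $\phi$-integral can be forced to equal any prescribed $\alpha\in(-\infty,\alpha_M)$. First I would unravel the definition of $\delta^*$: for any $\delta>\delta^*$ there is some $q>0$ with $P(q\phi-\delta\log|T'|)<\infty$, while for $\delta<\delta^*$ we have $P(q\phi-\delta\log|T'|)=\infty$ for every $q>0$. Since for fixed $q$ the map $\delta\mapsto P(q\phi-\delta\log|T'|)$ is non-increasing (because $\log|T'|\geq\log\xi>0$ on the relevant part, after possibly passing to an iterate, so increasing $\delta$ decreases the potential), the pressure being finite for $\delta$ slightly above $\delta^*$ and some $q>0$ gives us, via the approximation property (Proposition \ref{app}), finite-subsystem equilibrium-type measures $\mu$ on which $h(\mu)+\int(q\phi-\delta\log|T'|)\,d\mu$ is close to this finite pressure; in particular $h(\mu)-\delta\lambda(\mu)$ is bounded below and $\lambda(\mu)<\infty$, so $h(\mu)/\lambda(\mu)$ is close to $\delta$, hence at least $\delta^*-\epsilon$.

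The key remaining step is to arrange that such a measure also satisfies $\int\phi\,d\mu=\alpha$. Here I would use the same convexity/mixing trick that appears repeatedly in the paper (e.g. in the proof of Lemma \ref{continuity} and the lower-bound lemma in Section 3): fix auxiliary measures $\underline\mu,\overline\mu\in\mathcal{M}_T$ with $\int\phi\,d\underline\mu<\alpha<\int\phi\,d\overline\mu$ and finite Lyapunov exponents (these exist for $\alpha<\alpha_M$), and take convex combinations $t\mu+(1-t)\overline\mu$ or $t\mu+(1-t)\underline\mu$. The point is that as we approach the extreme cases $t\to 1$, the Bowen-type ratio $h(\cdot)/\lambda(\cdot)$ along the combination converges to $h(\mu)/\lambda(\mu)$ by affinity of entropy and of the integral, together with $\lambda$ of the combination staying bounded. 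So we can find, for every $\epsilon>0$, a measure $\nu$ with $\int\phi\,d\nu=\alpha$, $\lambda(\nu)<\infty$, and $h(\nu)/\lambda(\nu)\geq\delta^*-\epsilon$. Letting $\epsilon\to 0$ gives $\delta(\alpha)\geq\delta^*$.

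The main obstacle I expect is the non-compactness: one must be careful that the finite-subsystem measures produced from the approximation property genuinely have $\lambda(\mu)<\infty$ and that the relevant pressures are attained or well-approximated — the potential $q\phi-\delta\log|T'|$ is unbounded below, so I would restrict to finite subsystems where everything is classical and then check that the estimates are uniform. A secondary technical point is that $\log|T'|$ need only exceed $\xi^n$ after $n\geq N$ iterates, so strictly speaking the monotonicity of $\delta\mapsto P(q\phi-\delta\log|T'|)$ and the bound $\lambda(\mu)\geq\log\xi$ should be phrased for $T^N$ and transferred back, but this is routine given the machinery of Section \ref{st}. Once the measure with the right integral is in hand, the inequality $h(\nu)/\lambda(\nu)\leq\delta(\alpha)$ is immediate from the definition of $\delta(\alpha)$ as a supremum, and the proof concludes.
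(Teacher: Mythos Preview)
Your argument has a genuine gap at the step where you claim that, from the finiteness of $P(q\phi-\delta\log|T'|)$ for $\delta$ slightly above $\delta^*$, you can produce a measure $\mu$ with $h(\mu)/\lambda(\mu)$ close to $\delta^*$. Knowing that $h(\mu)+q\int\phi\,d\mu-\delta\lambda(\mu)$ approximates a finite pressure $P$ only tells you $h(\mu)-\delta\lambda(\mu)\approx P-q\int\phi\,d\mu$; if $P<0$ this yields no useful lower bound on $h(\mu)/\lambda(\mu)$ without independent control on $\lambda(\mu)$, and in general there is no reason the equilibrium-type measure for $q\phi-\delta\log|T'|$ should have dimension near $\delta$. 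The convex-combination step is also problematic as written: the parameter $t$ in $t\mu+(1-t)\overline\mu$ is \emph{determined} by the constraint $\int\phi\,d\nu=\alpha$, so you cannot freely send $t\to 1$ unless $\int\phi\,d\mu$ happens to be near $\alpha$, which you have not arranged.

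The paper's proof exploits the opposite regime: for $s<\delta^*$ one has $P(q\phi-s\log|T'|)=\infty$ for \emph{every} $q>0$. Taking $q=n\to\infty$, the variational principle produces measures $\mu_n$ with $h(\mu_n)+n\int\phi\,d\mu_n-s\lambda(\mu_n)$ arbitrarily large; since $\phi<0$ this gives both $h(\mu_n)>s\lambda(\mu_n)$ and, crucially, $h(\mu_n)/(-\int\phi\,d\mu_n)>n\to\infty$. Hence $h(\mu_n)\to\infty$. Now one convex-combines $\mu_n$ with a fixed auxiliary measure $\mu$ (with $\int\phi\,d\mu=\beta$ on the correct side of $\alpha$) to hit $\int\phi\,d\nu_n=\alpha$; the coefficient $(1-p_n)$ of $\mu_n$ stays bounded away from $0$, so $(1-p_n)h(\mu_n)\to\infty$ dominates, and the inequality $h(\mu_n)>s\lambda(\mu_n)$ passes to the limit to give $\liminf h(\nu_n)/\lambda(\nu_n)\geq s$. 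It is the infinite-pressure side of the definition of $\delta^*$, not the finite side, that drives the argument.
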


\begin{proof}
 If $\delta^*=0$ then this statement is obvious so we will assume that $\delta^*>0$. Let $0<s<\delta^*$ and $\alpha\in (-\infty,\alpha_M)$.   
 In order to show that $\delta(\alpha) > \delta^*$ we will exhibit a sequence of invariant measures $(\nu_n)$ such that for every $n \in \N$ we have $\int \phi \ d \nu_n = \alpha$ and
 \[\lim_{n \to \infty} \frac{h(\nu_n)}{\lambda(\nu_n)} \geq s.\]
 First note that we can find a sequence of invariant measures $(\mu_n)$ such that for all $n$ we have $s\lambda(\mu_n)<h(\mu_n)<\infty$ and $\lim_{n\rightarrow\infty}\frac{h(\mu_n)}{-\int\phi\text{d}\mu_n}=\infty$. Indeed, note that for every $q>0$ we have that $P(q\phi-\delta\log |T'|) = \infty$. Let $q >0$ and $A>0$ with $A>q\alpha_M$. Because of the approximation property of the pressure,  we can choose an invariant measure $\nu$ satisfying
 \begin{equation} \label{A}
 h(\nu) +q \int \phi \ d \nu -s \lambda(\nu) \geq A. 
 \end{equation}
 That is
 \[h(\nu)> \left(A- q\alpha_M	\right) + s \lambda(\nu).\]
  From where we can deduce that
 \[ s \lambda(\nu)<h(\nu) < \infty.\]
Since $\int \phi \ d \nu <0$ then from equation \eqref{A} we have
 \begin{equation} \label{B}
 \frac{h(\nu)}{-\int \phi  \ d  \nu} >  -\frac{A}{\int \phi  \ d  \nu} -s  \frac{\lambda(\nu)}{\int \phi  \ d  \nu} +q > q.
  \end{equation}
Since we can do this for every positive $q \in \R$, let $q=n$ and denote by $\mu_n$ an invariant measures satisfying equations \eqref{A} and \eqref{B}. The sequence $(\mu_n)$ complies  with the required conditions.

Passing to a subsequence if necessary, we can assume that the sequence $\int\phi\text{d}\mu_n$  is monotone  and that the following limit exists $\gamma=\lim_{n\rightarrow}\int\phi\text{d}\mu_n$ (note that $\gamma$ can be $-\infty$). 

 For sufficiently large values of $n \in \N$ the integral $\int\phi\text{d}\mu_n$ is close to $\gamma$.  Therefore, there exists $\beta \in \R$ and an invariant measure $\mu$ satisfying: 
 \begin{enumerate}
 \item $\int\phi\text{d}\mu=\beta$,
 \item $h(\mu) < \infty$ and $\lambda(\mu) <\infty$,
 \item $\alpha\in (\beta,\int\phi\text{d}\mu_n]$ or $\alpha\in [\int\phi\text{d}\mu_n,\beta)$  for $n \in \N$ large enough. 
  \end{enumerate}

For $n$ sufficiently large we can also find constants $p_n \in [0,1]$ such that $\alpha=p_n\beta+(1-p_n)\int\phi\text{d}\mu_n$. If $p_n=0$ for all $n$ sufficiently large then there is nothing to prove. Consider the following sequence of invariant measures $(\nu_n)$ defined by
\[ \nu_n = p_n \mu +(1-p_n) \mu_n.\]
Then $\int \phi \ d \nu_n = \alpha$. By construction we have that $\lim_{n \to \infty} h(\mu_n)= \infty$. Since by assumption $\alpha \neq \beta$ we have that 
$\lim_{n \to \infty} (1-p_n) \in (0, 1]$. Therefore
\[\lim_{n\rightarrow\infty} (1-p_n)h(\mu_n)=\infty.\]
This implies that
$$\lim_{n \to \infty} \frac{h(\nu_n)}{\lambda(\nu_n)}  =\lim_{n\rightarrow\infty}\frac{ p_n h(\mu)+(1-p_n)h(\mu_n)}{p_n\lambda(\mu)+(1-p_n)\lambda(\mu_n)}\geq s.$$
    \end{proof}

For notational ease we will allow $P( q(\phi - \alpha) - \delta \log|T'|)\geq 0$ to include the case when it is infinite.

\begin{lema}\label{positive}
If $\phi\in\mathcal{R}$,  $\alpha \in (-\infty, \alpha_M)$ and $\delta(\alpha)>\delta^*$  
then for all $q\in\R$ we have
$$P( q(\phi - \alpha) - \delta(\alpha) \log|T'|)\geq 0$$
\end{lema}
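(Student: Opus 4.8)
The plan is to show that if $P(q(\phi-\alpha)-\delta(\alpha)\log|T'|)<0$ for some $q\in\R$, then we can construct invariant measures witnessing $\delta(\alpha)<$ something, contradicting the definition of $\delta(\alpha)$ as a supremum — or more precisely, contradicting $\delta(\alpha)>\delta^*$. So I would argue by contradiction: suppose there exists $q_0\in\R$ with $P(q_0(\phi-\alpha)-\delta(\alpha)\log|T'|)<0$. First I would treat the sign of $q_0$. If $q_0=0$ then $P(-\delta(\alpha)\log|T'|)<0$, and since $\delta(\alpha)>\delta^*\geq 0$ and $s\mapsto P(-s\log|T'|)$ is decreasing with $P(0)\geq 0$, this forces $\dim_H\Lambda<\delta(\alpha)$; but $\delta(\alpha)$ is a ratio $h(\mu)/\lambda(\mu)\leq\dim_H\Lambda$ for some measure (by Lemma~\ref{lb} combined with the lower bound argument, or by the variational principle), giving a contradiction. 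So assume $q_0\neq 0$.

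Next, since $\delta(\alpha)>\delta^*$, by definition of $\delta^*$ we can pick $s$ with $\delta^*<s<\delta(\alpha)$ such that $P(q\phi-s\log|T'|)=\infty$ for all $q>0$ — actually I want to be careful: $\delta^*$ is the infimum of $\delta$ for which $P(q\phi-\delta\log|T'|)<\infty$ for \emph{some} $q>0$, so for $s<\delta^*$ we have $P(q\phi-s\log|T'|)=\infty$ for all $q>0$, but for $\delta^*<s<\delta(\alpha)$ finiteness may hold. The cleaner route: use the approximation property (Proposition~\ref{app}) directly. For any compact invariant $K\subset\Sigma$, the pressure $P_{\sigma|K}(q_0(\phi-\alpha)-\delta(\alpha)\log|T'|)\leq P(\cdots)<0$, so for every ergodic measure $\mu$ supported on a finite subsystem, $h(\mu)+q_0\int(\phi-\alpha)\,d\mu-\delta(\alpha)\lambda(\mu)<0$. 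Now I would exploit that $\delta(\alpha)>\delta^*$ via the construction in the proof of Lemma~\ref{lb}: it produces invariant measures $\nu_n$ with $\int\phi\,d\nu_n=\alpha$ and $h(\nu_n)/\lambda(\nu_n)\to$ some value $>\delta^*$, in fact one can push this arbitrarily close to $\delta(\alpha)$ by the definition of $\delta(\alpha)$ as the supremum. For such measures (approximated by finitely-supported ones) with $\int\phi\,d\nu=\alpha$, the inequality above reads $h(\nu)-\delta(\alpha)\lambda(\nu)<0$, i.e. $h(\nu)/\lambda(\nu)<\delta(\alpha)$ — which is consistent, not yet a contradiction. The strict negativity $P<0$ is what I need to leverage: it must give a definite gap.

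The resolution I would pursue: combine the strict inequality $P(q_0(\phi-\alpha)-\delta(\alpha)\log|T'|)<0$ with the \emph{unbounded entropy} phenomenon from Lemma~\ref{lb}. Since $\delta(\alpha)>\delta^*$, the proof of Lemma~\ref{lb} shows we may take measures $\mu_n$ with $\int\phi\,d\mu_n\to\gamma$, $h(\mu_n)\to\infty$, and $s\lambda(\mu_n)<h(\mu_n)$ for our chosen $s\in(\delta^*,\delta(\alpha))$, then form $\nu_n=p_n\mu+(1-p_n)\mu_n$ with $\int\phi\,d\nu_n=\alpha$ and $(1-p_n)$ bounded away from $0$. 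Evaluating the variational expression at $\nu_n$: $h(\nu_n)+q_0(\int\phi\,d\nu_n-\alpha)-\delta(\alpha)\lambda(\nu_n) = h(\nu_n)-\delta(\alpha)\lambda(\nu_n)$, and along the subsequence both $h(\nu_n)$ and $\lambda(\nu_n)$ tend to infinity with ratio tending to something $\geq s$; but we need the ratio to exceed $\delta(\alpha)$, or rather, we need $h(\nu_n)-\delta(\alpha)\lambda(\nu_n)\geq 0$ eventually, contradicting that it is $\leq P(\cdots)<0$ (here using that $\nu_n$, or finite approximations of it, are admissible in the approximation property so the variational expression is bounded by the pressure). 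To get $h(\nu_n)-\delta(\alpha)\lambda(\nu_n)\geq 0$ I would instead choose the $\mu_n$ more carefully: by the definition of $\delta(\alpha)$ as a supremum, pick measures $\rho_n$ with $\int\phi\,d\rho_n=\alpha$ and $h(\rho_n)/\lambda(\rho_n)\to\delta(\alpha)$; but these need not have $h(\rho_n)\to\infty$. The trick is to take convex combinations $\nu_n=t_n\rho_n+(1-t_n)\mu_{k(n)}$ keeping $\int\phi\,d\nu_n=\alpha$ (possible since $\int\phi\,d\rho_n=\alpha$ forces, together with $\int\phi\,d\mu_k\to\gamma\neq\alpha$ generically, a correction — one may need to also throw in the measure $\mu$ from Lemma~\ref{lb}'s proof), chosen so that $h(\nu_n)/\lambda(\nu_n)\to\delta(\alpha)$ from below but $h(\nu_n)\to\infty$; then $h(\nu_n)-\delta(\alpha)\lambda(\nu_n)=\lambda(\nu_n)(h(\nu_n)/\lambda(\nu_n)-\delta(\alpha))$, and if the convergence of the ratio is slow compared to $\lambda(\nu_n)\to\infty$ this product can be made to not go to $-\infty$; but to beat a \emph{fixed negative} bound $P(\cdots)<0$ we actually want it $\to+\infty$ or $\geq 0$.

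\textbf{The main obstacle} is precisely this quantitative matching: turning the strict negativity of the pressure into a contradiction requires producing a sequence of (finitely-supported) invariant measures $\nu_n$ with $\int\phi\,d\nu_n=\alpha$ along which $h(\nu_n)-\delta(\alpha)\lambda(\nu_n)$ fails to be bounded above by a negative constant — equivalently, $\liminf_n (h(\nu_n)-\delta(\alpha)\lambda(\nu_n))\geq 0$. The cleanest way I expect this to work is: the hypothesis $\delta(\alpha)>\delta^*$ combined with the construction in Lemma~\ref{lb} gives measures with $h/\lambda$ \emph{strictly} exceeding some $s>\delta^*$ while $\lambda\to\infty$; one then notes that if we had $h(\nu_n)/\lambda(\nu_n)<\delta(\alpha)-\eta$ for all admissible $\nu_n$ and some fixed $\eta>0$, that would contradict $\delta(\alpha)=\sup$; choosing a diagonal sequence realizing $h/\lambda\to\delta(\alpha)$ \emph{and} $\lambda\to\infty$ (the latter arranged by mixing in a high-entropy measure via a convex combination that preserves $\int\phi\,d\nu_n=\alpha$, which costs only an arbitrarily small amount in the ratio), we get $h(\nu_n)-\delta(\alpha)\lambda(\nu_n)=\lambda(\nu_n)\cdot o(1)$ where the $o(1)$ can be taken to decay arbitrarily slowly, hence the product $\to 0^-$ or stays bounded, contradicting strict separation from $0$. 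I would write the details by first fixing $q_0$, splitting on its sign, and in the case $q_0\neq 0$ carefully running this diagonal-plus-convex-combination argument, invoking Proposition~\ref{app} to pass between compact subsystems and the full shift.
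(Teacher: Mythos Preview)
Your overall instinct---evaluate the variational expression at invariant measures $\nu$ with $\int\phi\,d\nu=\alpha$, so that the $q$-term drops out and the expression reduces to $h(\nu)-\delta(\alpha)\lambda(\nu)$---is exactly right, and it is precisely why no case-splitting on the sign of $q_0$ is needed. But from that point on you head in the wrong direction and create the obstacle you then struggle with.

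You try to arrange $\lambda(\nu_n)\to\infty$ by mixing in the high-entropy measures from Lemma~\ref{lb}. This is counterproductive: with $\lambda(\nu_n)\to\infty$ and $h(\nu_n)/\lambda(\nu_n)\to\delta(\alpha)$ from below, the product $\lambda(\nu_n)\bigl(h(\nu_n)/\lambda(\nu_n)-\delta(\alpha)\bigr)$ can perfectly well tend to $-\infty$, and you have no mechanism to control the relative rates. Your remark that ``the $o(1)$ can be taken to decay arbitrarily slowly'' is not justified---the rate is not a free parameter once the measures are chosen, and forcing $\lambda\to\infty$ via convex combinations will generically \emph{worsen} the ratio, not improve it.

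The paper's key observation is the opposite: the hypothesis $\delta(\alpha)>\delta^*$ forces $\lambda(\mu_n)$ to be \emph{bounded} along any sequence $(\mu_n)$ with $\int\phi\,d\mu_n=\alpha$ and $h(\mu_n)/\lambda(\mu_n)\to\delta(\alpha)$. Indeed, pick $s\in(\delta^*,\delta(\alpha))$; by the definition of $\delta^*$ there is some $q_0>0$ with $P(q_0\phi-s\log|T'|)=K<\infty$, so the variational principle gives $h(\mu_n)-s\lambda(\mu_n)\leq K-q_0\alpha$. But for large $n$ the ratio $h(\mu_n)/\lambda(\mu_n)$ exceeds $(s+\delta(\alpha))/2$, whence $h(\mu_n)-s\lambda(\mu_n)\geq\tfrac{\delta(\alpha)-s}{2}\lambda(\mu_n)$, and combining yields a uniform bound on $\lambda(\mu_n)$. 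Once $\lambda(\mu_n)$ is bounded, $\lambda(\mu_n)\bigl(h(\mu_n)/\lambda(\mu_n)-\delta(\alpha)\bigr)\to 0$, so for every $q\in\R$ the variational principle gives $P(q(\phi-\alpha)-\delta(\alpha)\log|T'|)\geq h(\mu_n)-\delta(\alpha)\lambda(\mu_n)\to 0$. This is the whole argument---direct, with no contradiction, no case analysis, and no appeal to the construction in Lemma~\ref{lb}.
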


\begin{proof}
Recall that
$$\delta(\alpha)=\sup \left\{ \frac{h(\mu)}{\lambda(\mu)} :\mu \in \mathcal{M}_T, \int  \phi \, d \mu = \alpha \textrm{ and } \lambda(\mu) < \infty \right\}.$$
Denote by $(\mu_n)_n$ a sequence of $T-$invariant measures such that for every $n \in \N$ we have
\begin{enumerate}
\item $\int \phi \ d \mu_n= \alpha$,
\item $h(\mu_n) < \infty$ and $\lambda(\mu_n) < \infty$, 
\item \[ \lim_{n \to \infty} \frac{h(\mu_n)}{\lambda(\mu_n)} = \delta(\alpha).\]
\end{enumerate}
If we choose $\delta^*<s<\delta(\alpha)$ and $q_0>0$ such that $P(q_0\phi-s\log |T'|)=K<\infty$ then by the variational principle for all $n$ we have
$$q\int\phi\text{d}\mu_n-s\lambda(\mu_n)+h(\mu_n)\leq K.$$
Since for $n$ sufficiently large we have
\[  \delta^* < \frac{h(\mu_n)}{\lambda(\mu_n)} = \delta(\alpha),\]
we obtain $\delta^* \lambda(\mu_n) < h(\mu_n)$. Thus, for $n$ sufficiently large we have that
$$\frac{\delta^*-s}{2}\lambda(\mu_n)\leq   \delta^*\lambda(\mu_n) - s \lambda(\mu_n) \leq
h(\mu_n) -s \lambda(\mu_n) \leq  K-q\alpha.$$
Furthermore by the variational principle we have
\begin{eqnarray*}
P( q(\phi - \alpha) - \delta(\alpha) \log|T'|)\geq h(\mu_n)+ q \left( \int \phi \ d \mu_n - \alpha \right)-\delta(\alpha) \lambda(\mu_n) =  &\\ h(\mu_n)- \delta(\alpha) \lambda(\mu_n) \geq 
\lambda(\mu_n) \left( \frac{h(\mu_n)}{\lambda(\mu_n)}  -\delta(\alpha)  \right).
 \end{eqnarray*}
The result then follows since
\begin{equation*}
\liminf_{n \to \infty} \left(\lambda(\mu_n) \left( \frac{h(\mu_n)}{\lambda(\mu_n)}  -\delta(\alpha)  \right) \right) \geq 0.
\end{equation*}

\end{proof}

We can now describe the function $q\to P(q(\phi-\alpha)-\delta(\alpha)\log |T'|)$ in more detail.

\begin{lema}\label{options}
For any $\alpha\in (-\infty,\alpha_M]$ one of the following three statements will hold,
\begin{enumerate}
\item\label{case1}
$\delta(\alpha)=\delta^*.$
\item\label{case2}
There exists $q_0 \in \R$ such that $P\left(q_0(\phi-\alpha)-\delta(\alpha) \log |T'|\right)=0$ and $$\frac{\partial}{\partial q}P\left(q(\phi-\alpha)-\delta(\alpha) \log |T'| \right) \Big|_{q={q_0}}=0.$$
\item\label{case3}
There exists $q_c \in \R$ such that $P\left(q_c(\phi-\alpha)-\delta(\alpha) \log|T'|\right)=0$ and $$P\left(q(\phi-\alpha)-\delta(\alpha)\log |T'|\right)=\infty$$ for all $q<q_c$. 
\end{enumerate}
\end{lema}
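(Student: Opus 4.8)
The plan is to analyze the function $Q(q) := P(q(\phi-\alpha)-\delta(\alpha)\log|T'|)$ as $q$ ranges over $\R$, using the convexity of pressure together with the regularity information from Proposition \ref{r}, and to show that exactly one of the three listed behaviours must occur. The case split will be organized according to whether $\delta(\alpha)=\delta^*$ (which is Case \ref{case1}); so from now on I would assume $\delta(\alpha)>\delta^*$ and aim to land in Case \ref{case2} or Case \ref{case3}.

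First I would record the basic structural facts. The map $q\mapsto Q(q)$ is convex (as a supremum of affine functions of $q$, via the variational principle), hence continuous and differentiable off a countable set on the interior of the region where it is finite, and by Proposition \ref{r} it is in fact real analytic wherever it is finite. By Lemma \ref{positive}, since we are assuming $\delta(\alpha)>\delta^*$, we have $Q(q)\geq 0$ for all $q\in\R$. Next I would locate a zero of $Q$. At $q=0$ we get $Q(0)=P(-\delta(\alpha)\log|T'|)$; I would argue using the definition of $\delta(\alpha)$ as a supremum of $h(\mu)/\lambda(\mu)$ over measures with $\int\phi\,d\mu=\alpha$, together with the variational principle, that $P(-\delta(\alpha)\log|T'|)\geq 0$, while on the other hand one can bound it: the standard Bowen-type argument (or a Ruelle-type inequality together with the definition of Hausdorff dimension of $\Lambda$) is not quite what we want here — instead I would use that $\delta(\alpha)\geq$ the dimension realized by the near-optimal measures and that $h(\mu)-\delta(\alpha)\lambda(\mu)\leq 0$ along an optimizing sequence, forcing $\inf_q Q(q)=0$. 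Since $Q$ is convex, lower semicontinuous, nonnegative, and attains the value $0$ as an infimum, the set $\{q: Q(q)=0\}$ is a nonempty closed interval (possibly a single point).

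With a zero $q_c$ of $Q$ in hand, I would split on the behaviour of $Q$ to the left of the finiteness region. If $Q$ is finite on a neighbourhood of some zero $q_0$, then $q_0$ is an interior minimum of a differentiable convex function, so $Q'(q_0)=0$; by the derivative formula in Proposition \ref{r} this says $\int\phi\,d\mu_{q_0,\delta(\alpha)}-\alpha = Q'(q_0)=0$, i.e. we are exactly in Case \ref{case2}. Otherwise, $Q$ fails to be finite on any left-neighbourhood of its zero set; let $q_c$ be the infimum of the (closed) set where $Q$ is finite and nonnegative — by convexity and the definition of $\delta^*$ (the infimal $\delta$ for which $P(q\phi-\delta\log|T'|)<\infty$ for some $q>0$), once $Q$ is infinite just left of $q_c$ it stays infinite for all smaller $q$, and $Q(q_c)=0$ by continuity from the right along the finite region. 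That is Case \ref{case3}.

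The main obstacle I expect is the bookkeeping at the boundary of the finiteness region: I need to rule out the pathological possibility that $Q$ is finite and positive everywhere (no zero at all) or that the zero set of $Q$ is an interval whose right endpoint is on the boundary of finiteness while the left is interior, etc. This is where Lemma \ref{positive} (giving $Q\geq 0$) and a careful use of the approximation property of the pressure (Proposition \ref{app}) to show $\inf_q Q(q)=0$ are essential, together with the fact — inherited from the $\bar{\mathcal R}$ hypothesis and the critical-exponent statement in the "Regularity of the pressure function" proposition — that the set of $q$ for which $Q(q)<\infty$ is a half-line or all of $\R$, so that "infinite to the left of the zero" is the only alternative to "finite in a neighbourhood of the zero". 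Once the finiteness region is understood to be an interval unbounded to the right (or all of $\R$), the convexity argument closes the trichotomy cleanly.
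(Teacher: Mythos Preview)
Your overall architecture is right --- assume $\delta(\alpha)>\delta^*$, use convexity and Lemma~\ref{positive} to get $Q(q):=P(q(\phi-\alpha)-\delta(\alpha)\log|T'|)\ge 0$, and then split according to whether a zero of $Q$ sits in the interior of the finiteness region or at its left endpoint. The case split is the same as the paper's. But the two places where you actually have to produce a zero of $Q$ are not yet justified, and one of them is argued in the wrong direction.

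\medskip
\textbf{The claim $\inf_q Q(q)=0$.} You propose to prove this using a sequence $(\mu_n)$ with $\int\phi\,d\mu_n=\alpha$ and $h(\mu_n)/\lambda(\mu_n)\to\delta(\alpha)$, noting that $h(\mu_n)-\delta(\alpha)\lambda(\mu_n)\le 0$. But for such $\mu_n$ the variational principle gives, for \emph{every} $q$,
\[
Q(q)\ \ge\ h(\mu_n)+q\Big(\int\phi\,d\mu_n-\alpha\Big)-\delta(\alpha)\lambda(\mu_n)\ =\ h(\mu_n)-\delta(\alpha)\lambda(\mu_n),
\]
which is a \emph{lower} bound on $Q$; it is exactly the content of Lemma~\ref{positive}, not an upper bound. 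The paper does not attempt to prove $\inf_q Q=0$ from the optimizing sequence. Instead it goes the other way round: it first shows that \emph{if} $Q'(q_0)=0$ at some point in the finiteness region, then the equilibrium state $\mu_{q_0}$ for $q_0(\phi-\alpha)-\delta(\alpha)\log|T'|$ satisfies $\int\phi\,d\mu_{q_0}=\alpha$, whence
\[
Q(q_0)=h(\mu_{q_0})-\delta(\alpha)\lambda(\mu_{q_0})\le 0,
\]
by the very definition of $\delta(\alpha)$; combined with $Q\ge 0$ this forces $Q(q_0)=0$. Then, using that $\lim_{q\to\infty}Q'(q)=\alpha_M-\alpha>0$ (ergodic optimization), one sees that if $Q$ is finite on all of $\R$ it must have a critical point, landing in Case~\ref{case2}. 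So the upper bound comes from the equilibrium state of $Q$ itself, not from near-optimizers of $h/\lambda$.

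\medskip
\textbf{Case~\ref{case3}: $Q(q_c)=0$.} You write that $Q(q_c)=0$ ``by continuity from the right along the finite region''. Continuity from the right only gives $Q(q_c)=\inf_{q>q_c}Q(q)$, and you have not shown this infimum is $0$ (your argument for $\inf Q=0$ was the one just discussed). The paper handles this step differently: first it rules out $Q(q_c)=\infty$ (otherwise $Q$ would have to decrease somewhere to the right of $q_c$, forcing a critical point, contradiction). Then, assuming $Q(q_c)=C>0$, it passes to a compact invariant $K$ with $P_K(q_c(\phi-\alpha)-\delta(\alpha)\log|T'|)>0$ via Proposition~\ref{app}. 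On $K$ the restricted pressure $q\mapsto P_K(q(\phi-\alpha)-\delta(\alpha)\log|T'|)$ is finite and convex for all $q$, tends to $+\infty$ as $q\to\pm\infty$, and hence has a critical point $q_K$; the corresponding equilibrium state $\mu_K$ then has $\int\phi\,d\mu_K=\alpha$ and $h(\mu_K)-\delta(\alpha)\lambda(\mu_K)=P_K(q_K(\phi-\alpha)-\delta(\alpha)\log|T'|)>0$, contradicting the definition of $\delta(\alpha)$. That contradiction is what pins $Q(q_c)$ to $0$.

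\medskip
In summary: keep your convexity/half-line picture, but replace the ``optimizing sequence gives $\inf Q=0$'' step by the implication $Q'(q_0)=0\Rightarrow Q(q_0)=0$ via the equilibrium state of $Q$, and replace the ``continuity from the right'' step at $q_c$ by the compact-approximation contradiction argument.
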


\begin{proof}
We will assume throughout that $\delta(\alpha)>\delta^*$ since otherwise (\ref{case1}) is satisfied. 

We know that when finite the function $q\to P\left(q(\phi-\alpha)-\delta(\alpha)\log |T'|\right)$ is real analytic. Moreover, in virtue of Lemma \ref{positive}, for all $q \in \R$ we have
 $$P\left(q(\phi-\alpha)-\delta(\alpha)\log |T'|\right)\geq 0.$$ We will show that if the derivative of the pressure is zero then the pressure itself is also zero. Indeed, assume that there exists $q_0 \in \R$ such that
$$\frac{\partial}{\partial q}P\left(q(\phi-\alpha)-\delta(\alpha)\log |T'|\right) \Big|_{q={q_0}}=0.$$
Denote by $\mu_{q_0}$ the equilibrium measure corresponding to the potential
$q_0(\phi-\alpha)-\delta(\alpha)$. Then, Ruelle's formula for the derivative of pressure gives that $\int\phi\text{d}\mu_{q_0}=\alpha$. Thus
$$P\left(q(\phi-\alpha)-\delta(\alpha)\log |T'|\right)=-\delta(\alpha)\lambda(\mu_{q_0})+h(\mu_{q_0})\leq 0.$$
So, $P\left(q_0(\phi-\alpha)-\delta(\alpha)\log |T'|\right)=0$ and statement \ref{case2} holds. Note that if the pressure function  $q\to P\left(q(\phi-\alpha)-\delta(\alpha)\log |T'|\right)$ is finite for every $q \in \R$ then there must exists $q_0 \in \R$ such that the derivative of
$P\left(q(\phi-\alpha)-\delta(\alpha)\log |T'|\right)$ at $q=q_0$ is equal to zero. This follows from
 Ruelle's formula for the derivative of pressure and the fact that $\alpha \in (-\infty, \alpha_M)$.
 
Let us assume now that the derivative of the pressure does not vanish at any point
 and let $q_c=\inf\{q:P(q(\phi-\alpha)-\delta(\alpha)\log |T'|)<\infty\}$. It follows from standard ergodic optimization arguments \cite{jmu,l} that 
$$\lim_{q*\to\infty}\frac{\partial}{\partial q} P(q(\phi-\alpha)-\delta(\alpha) \log |T'|) \Big|_{q={q*}}>0.$$ 
If $P(q_c(\phi-\alpha)-\delta(\alpha)\log |T'|)=\infty$ then by considering compact approximations to the pressure we can see that 
$$\lim_{q\to q_c^+}P(q(\phi-\alpha)-\delta(\alpha)\log |T'|)=\infty.$$
But recall that for $q >q_c$  the pressure is finite. 
 This means that for small $\epsilon >0$ the derivative of the pressure  for $q \in (q_c, q_c + \epsilon)$ will be negative.  This, in turn,  will imply that there is a zero for the derivative and so cannot happen. Thus
$P(q_c(\phi-\alpha)-\delta(\alpha)\log |T'|)<\infty$ and 
$$\frac{\partial}{\partial q} P(q_c(\phi-\alpha)-\delta(\alpha)\log |T'|) \Big|_{q=q_c}>0.$$ 

If $P(q_c(\phi-\alpha)-\delta(\alpha)\log |T'|)=C>0$ then there exits a compact invariant set $K$ on which the pressure restricted to $K$ satisfy $P_K(q(\phi-\alpha)-\delta(\alpha)\log|T'|)>0$ for all $q \in \R$. By considering the behaviour as $q\to\infty$ and $q\to-\infty$ this function must have a critical  point that we denote by $q_K$. denote by $\mu_K$  the equilibrium measure corresponding to $q_K(\phi-\alpha)-\delta(\alpha)\log |T'|$.
 We can conclude that $\int\phi\text{d}\mu_K=\alpha$ and so
$$0<P_K(q_K(\phi-\alpha)-\delta(\alpha)\log |T'|)= h(\mu_K) -\delta(\alpha) \lambda(\mu_K).$$
This means that $h(\mu_K)/\lambda(\mu_K)>\delta(\alpha)$ which contradicts the definition of $\delta(\alpha)$. So we can conclude that 
$$P(q_c(\phi-\alpha)-\delta(\alpha)\log |T'|)=0$$
and Property \ref{case3} is satisfied. 
\end{proof}

We will denote by $A(\alpha)$ the set of values $\alpha \in (-\infty, \alpha_M)$ where case \ref{case2} of Lemma \ref{options} is satisfied.

\begin{lema}\label{analytic2} Let $I\subset A(\alpha)$ be an interval. The function $\alpha\rightarrow b(\alpha)=\delta(\alpha)$ is real analytic on I.
\end{lema}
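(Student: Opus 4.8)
The plan is to exploit the implicit characterization of $\delta(\alpha)$ via the zero set of the pressure function established in Lemma \ref{options}. On the interval $I \subset A(\alpha)$, case \ref{case2} holds, so for each $\alpha \in I$ there is a unique $q_0 = q_0(\alpha)$ with
\[
P\bigl(q_0(\phi-\alpha)-\delta(\alpha)\log|T'|\bigr)=0
\quad\text{and}\quad
\frac{\partial}{\partial q}P\bigl(q(\phi-\alpha)-\delta(\alpha)\log|T'|\bigr)\Big|_{q=q_0}=0.
\]
So I would consider the two-variable function $G(q,\delta;\alpha):=P\bigl(q(\phi-\alpha)-\delta\log|T'|\bigr)$, which by Proposition \ref{r} is real analytic in $(q,\delta)$ on the open set where it is finite, with $\partial_q G = \int\phi\,d\mu_{q,\delta}-\alpha$ and (by Ruelle's formula again, applied to the $-\log|T'|$ coordinate) $\partial_\delta G = -\lambda(\mu_{q,\delta})<0$. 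The idea is to solve $G=0$ and $\partial_q G = 0$ simultaneously for $(q,\delta)$ as functions of $\alpha$, via the implicit function theorem.

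The key steps, in order: First, fix $\alpha_0 \in I$ with corresponding $(q_0,\delta_0)$, $\delta_0=\delta(\alpha_0)$, and verify that $G$ is finite (hence real analytic) in a neighbourhood of $(q_0,\delta_0,\alpha_0)$ --- this uses that $\delta_0 = \delta(\alpha_0) > \delta^*$ on $A(\alpha)$ (otherwise case \ref{case1} would hold), so finiteness is stable under small perturbations of all three variables. Second, set up the map $\Phi(q,\delta,\alpha) = \bigl(G(q,\delta;\alpha),\, \partial_q G(q,\delta;\alpha)\bigr)$ and compute the Jacobian of $\Phi$ with respect to $(q,\delta)$ at $(q_0,\delta_0)$. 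Since $\partial_q G(q_0,\delta_0)=0$, the Jacobian matrix is
\[
\begin{pmatrix} \partial_q G & \partial_\delta G \\ \partial_{qq} G & \partial_{q\delta} G \end{pmatrix}
= \begin{pmatrix} 0 & -\lambda(\mu_{q_0,\delta_0}) \\ \partial_{qq}G & \partial_{q\delta}G \end{pmatrix},
\]
whose determinant is $\lambda(\mu_{q_0,\delta_0})\,\partial_{qq}G$. Third, argue this determinant is nonzero: $\lambda>0$ always, and $\partial_{qq}G = \operatorname{var}(\phi,\mu_{q_0,\delta_0})\geq 0$ is the asymptotic variance, which vanishes only if $\phi$ is cohomologous to a constant along the system; in that degenerate case $\int\phi\,d\mu=\alpha$ for all invariant $\mu$, $\alpha_M$ is not a strict supremum, and the statement is vacuous or $I$ is a single point --- so on a genuine interval $I$ we may assume $\partial_{qq}G>0$. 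Fourth, apply the analytic implicit function theorem to conclude that $q_0(\alpha)$ and $\delta_0(\alpha)$ are real analytic on a neighbourhood of $\alpha_0$; since $\delta(\alpha)=\delta_0(\alpha)$ by Lemma \ref{options}(\ref{case2}) and $b(\alpha)=\delta(\alpha)$ by Theorem \ref{main'}, this gives real analyticity of $b$ near $\alpha_0$, and $\alpha_0 \in I$ was arbitrary.

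The main obstacle I anticipate is the nondegeneracy of the second derivative $\partial_{qq}G = \operatorname{var}(\phi,\mu_{q_0,\delta_0})$: one must rule out the possibility that $\phi\circ\pi$ is cohomologous to a constant plus a multiple of $\log|T'|$ in a way that collapses the variance, and handle cleanly the convention $P \geq 0$ "including $+\infty$" near the boundary of the finiteness region so that the implicit function theorem is genuinely applicable on an open neighbourhood. A secondary technical point is confirming that the equilibrium measure $\mu_{q_0,\delta_0}$ genuinely exists and has finite entropy and Lyapunov exponent (so that Ruelle's formula and the variance computation are legitimate), which follows from the regularity results quoted in Subsection \ref{CMS} together with $\delta_0 > \delta^*$ guaranteeing we are strictly inside the analyticity region of the pressure.
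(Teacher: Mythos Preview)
Your proposal is correct and follows essentially the same route as the paper: set up the map $(q,\delta,\alpha)\mapsto\bigl(P(q(\phi-\alpha)-\delta\log|T'|),\,\partial_q P\bigr)$, observe that at the point determined by case~\ref{case2} the Jacobian in $(q,\delta)$ has determinant $-\partial_\delta P\cdot\partial_{qq}P=\lambda(\mu_{q_0,\delta_0})\cdot\partial_{qq}P$, and apply the analytic implicit function theorem. The paper dispatches the nondegeneracy $\partial_{qq}P\neq 0$ by simply invoking strict convexity of the pressure in $q$, whereas you identify it with the asymptotic variance and discuss the degenerate cohomologous case --- your treatment is more explicit but the argument is the same.
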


\begin{proof}
Recall that 
\begin{equation*}
b(\alpha)=  \sup \left\{ \frac{h(\mu)}{\lambda(\mu)} :\mu \in \mathcal{M}_T, \int  \phi \, d \mu = \alpha \textrm{ and } \lambda(\mu) < \infty \right\}.
\end{equation*}
In virtue of the definition of $I$ we have that for $\alpha\in I$ there exists $q(\alpha) \in \R$ such that
\begin{equation*}
P( q(\alpha)(\phi - \alpha) - b(\alpha) \log|T'|) =0.
\end{equation*}
Recall that the function $(q,\delta) \to P( q(\phi - \alpha) - \delta \log|T'|)$ is real analytic on each variable. In order to obtain the regularity of $b(\alpha)$ we will apply the implicit function theorem.  Proceeding as in Lemma $9.2.4$ of \cite{ba}, if
\begin{equation*}
G(q, \delta, \alpha):= \left( \begin{array}{c} P(q(\phi- \alpha) -\delta \log |T'|) \\ \frac{\partial P(q(\phi- \alpha) -\delta \log |T'|)}{\partial q} \end{array} \right) 
\end{equation*}
we just need to show that
\begin{eqnarray*}
\det \left[  \left( \frac{\partial G}{\partial q},   \frac{\partial G}{\partial \delta}            \right) \right]= \frac{\partial P(q(\phi- \alpha) -\delta \log |T'|) }{\partial q} \cdot \frac{ \partial^2 P(q(\phi- \alpha) -\delta \log |T'|)} {\partial \delta \partial q} - \\
 \frac{\partial^2  P(q(\phi- \alpha) -\delta \log |T'|) }{ \partial q^2} \cdot \frac{\partial P(q(\phi- \alpha) -\delta \log |T'|) }{\partial \delta} ,
  \end{eqnarray*}
is not equal to zero for $\delta= b(\alpha)$ and $q=q(\alpha)$. Since $\partial P(q(\phi- \alpha) -\delta \log |T'|)  / \partial q =0$ at $q=q(\alpha)$ it is sufficient to show that $ \partial^2 ( P(q(\phi- \alpha) -\delta \log |T'|)) / \partial q^2 $ and $ \partial ( P(q(\phi- \alpha) -\delta \log |T'|))  / \partial \delta$ are nonzero. 
Since the function $P(q(\phi - \alpha) -\delta \log |T'|)$ is strictly convex as a function of the variable $q$ we have that
\[\frac{ \partial^2 ( P(q(\phi- \alpha) -\delta \log |T'|))} { \partial q^2 } \neq 0.\]
Since, there exists an ergodic equilibrium measure $\mu_e$ such that
\[ \frac{\partial P(q(\phi - \alpha) -\delta \log |T'|)}{\partial \delta} = -\int \log|T'| d \mu_{e},\]
then we have
\[ \frac{\partial P(q(\phi - \alpha) -\delta \log |T'|)}{\partial \delta}  <0.\]
Therefore the function $b(\alpha)$ is real analytic on I.
\end{proof}

Let $s_{\infty}=\inf  \left\{ s \in \R : P(-s \log |T'| ) < \infty \right\}.$ We are now ready to complete the proof of Theorem \ref{analytic} with the following more general proposition.

\begin{prop}
Let $\phi \in \mathcal{R}$. We have that
\begin{enumerate}
\item[1.]
If $\delta^*=\dim_H \Lambda$ then $b(\alpha)=\delta^*$ for all $\alpha\in (-\infty,\alpha_M]$.
\item[2.]
If $\delta^*\leq s_{\infty}<\dim\Lambda$ then there exists a non-empty interval $I$ for which $I\subset A(\alpha)$ and thus $b(\alpha)$ is analytic for a region of values of $\alpha$. 
\item[3.]
If $\lim_{x\to 0}\frac{-\phi(x)}{\log |T'(x)|}=\infty$ then either
\begin{enumerate}
\item
$A(\alpha)=(-\infty,\alpha_M]$ and thus $b(\alpha)$ is analytic for $\alpha\in(-\infty,\alpha_M)$ or
\item 
there exists an ergodic measure of full dimension $\nu$ with $\underline{\alpha}=\int\phi\text{d}\nu>-\infty$ and then $I(\alpha)=[\underline{\alpha},\alpha_M]$, $b(\alpha)$ is analytic for $\alpha\in(-\underline{\alpha},\alpha_M]$ and $b(\alpha)=\dim\Lambda$ for $\alpha\leq\underline{\alpha}$.
\end{enumerate}
\item[4.]
If $\lim_{x\to 0}\frac{-\phi(x)}{\log |T'(x)|}=0$  then $b(\alpha)$ is analytic on $(-\infty,\alpha_M]$ except for at most two points.
\end{enumerate}
\end{prop}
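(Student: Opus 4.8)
The plan is to run everything through the trichotomy of Lemma~\ref{options}, together with Lemma~\ref{analytic2}, Lemma~\ref{positive} and Lemma~\ref{lb}. Under each of the four hypotheses I would partition $(-\infty,\alpha_M]$ into three kinds of pieces: the set on which case~\ref{case1} holds, where $b(\alpha)=\delta(\alpha)=\delta^*$ is constant and hence trivially real analytic; the set $A(\alpha)$ on which case~\ref{case2} holds, where $b$ is real analytic by Lemma~\ref{analytic2}; and the set on which case~\ref{case3} holds, which will be shown to contain all the non-analyticities. So the work in each item is (i) to pin down $\delta^*$ from the hypothesis, (ii) to show case~\ref{case2} holds on the appropriate interval and case~\ref{case1} on the rest, and (iii) to count the case~\ref{case3} points.

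For (1) I would simply sandwich: by Theorem~\ref{main'} and Lemma~\ref{lb}, $\delta^*\le\delta(\alpha)=\dim_H J(\alpha)\le\dim_H\Lambda=\delta^*$, so $b\equiv\delta^*$. For (2), since $s_\infty<\dim\Lambda$ the potential $-\dim\Lambda\log|T'|$ has an equilibrium state $\mu_{\max}$ with $\lambda(\mu_{\max})<\infty$ and $\dim_H\mu_{\max}=\dim\Lambda>\delta^*$; I would first locate one $\alpha_0\in A(\alpha)$ (take $\alpha_0=\int\phi\,d\mu_{\max}$ when this is finite, since then $q=0$ realises case~\ref{case2}: $P(-\dim\Lambda\log|T'|)=0$ and its $q$-derivative is $\int\phi\,d\mu_{\max}-\alpha_0=0$; otherwise anchor at a nearby point coming from an equilibrium state of $q\phi-\delta\log|T'|$ with $s_\infty<\delta<\dim\Lambda$), and then run the implicit function theorem exactly as in the proof of Lemma~\ref{analytic2}: the non-vanishing of the Jacobian $\det[\partial G/\partial q,\ \partial G/\partial\delta]$ persists near $\alpha_0$, so case~\ref{case2} holds on an open interval $I\ni\alpha_0$ and Lemma~\ref{analytic2} gives analyticity on $I$.

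Items (3) and (4) both start from a finiteness estimate near $x=0$. When $-\phi(x)/\log|T'(x)|\to\infty$, for every $q>0$ and $\delta\ge 0$ the quantity $\sup_{I(i)}(q\phi-\delta\log|T'|)$ is eventually dominated by $-\tfrac12 q|\phi|$ on $I(i)$, so the Mauldin--Urba\'nski series converges and $\delta^*=0$; hence case~\ref{case1} never occurs (as $\delta(\alpha)>0$ for $\alpha<\alpha_M$), while for $q<0$ that same term blows up on cylinders near $0$, so $P(q(\phi-\alpha)-\delta(\alpha)\log|T'|)=+\infty$ for $q<0$. Thus on $[0,\infty)$ the map $q\mapsto P(q(\phi-\alpha)-\delta(\alpha)\log|T'|)$ is finite, convex, $\ge 0$ by Lemma~\ref{positive}, tends to a limit $>0$ as $q\to\infty$ (ergodic optimisation plus $\alpha<\alpha_M$, as in Lemma~\ref{options}), and has minimum value exactly $0$ (not positive, by the compact-approximation argument of Lemma~\ref{options}). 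Either the minimiser is interior — case~\ref{case2} — or it sits at $q=0$ with non-zero derivative, which forces $P(-\dim\Lambda\log|T'|)=0$ with $\int\phi\,d\mu_{\max}>\alpha$, i.e. case~\ref{case3} at $q_c=0$. I would then decide between these: if there is an ergodic measure of full dimension $\nu$ with $\underline\alpha:=\int\phi\,d\nu>-\infty$, then mixing $\nu$ with periodic measures $\rho_n$ supported in $I(n)$ — for which in this regime $-\int\phi\,d\rho_n$ grows much faster than $\lambda(\rho_n)$, so the convex combinations keep $h/\lambda$ near $\dim\Lambda$ — gives $\delta(\alpha)=\dim\Lambda$ for all $\alpha\le\underline\alpha$ (case~\ref{case3} there), while for $\alpha>\underline\alpha$ one has $\delta(\alpha)<\dim\Lambda$ (upper semicontinuity of $\mu\mapsto\int\phi\,d\mu$ at the unique full-dimension measure $\nu$) and $\underline\alpha\in A(\alpha)$ via $q_0=0$, so case~\ref{case2} holds on $[\underline\alpha,\alpha_M)$, yielding (3b); if no such $\nu$ exists then $\delta(\alpha)<\dim\Lambda$ for every finite $\alpha$ and case~\ref{case2} holds throughout, yielding (3a). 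For (4), when $-\phi(x)/\log|T'(x)|\to 0$ the negligibility of $\phi$ near $0$ gives $\delta^*=s_\infty$ and, crucially, $P(q(\phi-\alpha)-\delta(\alpha)\log|T'|)<\infty$ for \emph{all} $q\in\R$ whenever $\delta(\alpha)>s_\infty$; the resulting convex analytic function of $q$ then has an interior minimiser (Ruelle's formula and $\alpha_m<\alpha<\alpha_M$) of value $0$, so case~\ref{case2} holds on all of $\{\alpha:\delta(\alpha)>\delta^*\}$, and on the complement $\delta(\alpha)=\delta^*$ with $b$ constant; the same mixing computation, now with $-\int\phi\,d\rho_n$ growing \emph{slower} than $\lambda(\rho_n)$, shows there is no interior plateau at $\dim\Lambda$, so the non-analytic points lie among the (at most two) endpoints of $\{\alpha:\delta(\alpha)>\delta^*\}$.

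The hard part will be step (iii): controlling the \emph{geometry} of $\{\alpha:\delta(\alpha)=\delta^*\}$ and of $A(\alpha)$, i.e. showing each is an interval (equivalently, that case~\ref{case3} occurs at only finitely many $\alpha$), so that $b$ is genuinely analytic off a finite set. This relies on the joint monotonicity/convexity of $(\alpha,\delta)\mapsto P(q(\phi-\alpha)-\delta\log|T'|)$ and on verifying that the $\delta$ produced by the implicit function theorem really is $\delta(\alpha)$ — which follows from the universal bound $h(\nu)/\lambda(\nu)\le\delta$ for every $\nu$ with $\int\phi\,d\nu=\alpha$ once $P(q(\phi-\alpha)-\delta\log|T'|)=0$. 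Secondary technical points are the pressure-finiteness and Lyapunov-exponent estimates near $x=0$ in items (3) and (4), and the uniqueness and upper-semicontinuity facts about the measure of full dimension used to identify $\underline\alpha$.
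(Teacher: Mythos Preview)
Your overall architecture matches the paper's: classify via the trichotomy of Lemma~\ref{options}, invoke Lemma~\ref{analytic2} on $A(\alpha)$, and handle the case~\ref{case1} locus as a constant plateau. Parts 1 and 2 are fine as sketched, and your periodic-measure mixing in Part~3 (to get $\delta(\alpha)=\dim_H\Lambda$ for $\alpha\le\underline\alpha$) is a legitimate alternative to the paper's direct pressure inequality. Two steps, however, do not go through.

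In Part~3, your argument that $\delta(\alpha)<\dim_H\Lambda$ for $\alpha>\underline\alpha$ via ``upper semicontinuity of $\mu\mapsto\int\phi\,d\mu$ at the unique full-dimension measure $\nu$'' fails in this non-compact setting: the supremum defining $\delta(\alpha)$ need not be attained, and a sequence $(\mu_n)$ with $h(\mu_n)/\lambda(\mu_n)\to s$ need not converge weakly to anything, so you cannot force $\int\phi\,d\mu_n\to\underline\alpha$. The paper instead notes that the one-sided $q$-derivative of $P(q(\phi-\alpha)-s\log|T'|)$ at $q=0^+$ equals $\int(\phi-\alpha)\,d\nu<0$, so the pressure drops below zero for small $q>0$, contradicting Lemma~\ref{positive} if $\delta(\alpha)=s$. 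You also skip the finer split $s_\infty<s$ versus $s_\infty=s$ (and, within the latter, $P(-s\log|T'|)<0$ versus $=0$); when $s_\infty=s$ and $P(-s\log|T'|)<0$ there is \emph{no} measure of full dimension, and the paper argues separately that any $(\mu_n)$ with $h(\mu_n)/\lambda(\mu_n)\to s$ must have $\lambda(\mu_n)\to\infty$, whence $\int\phi\,d\mu_n\to-\infty$ by the hypothesis on $\phi/\log|T'|$.

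In Part~4, your ``same mixing computation'' with periodic measures $\rho_n$ cannot show that $J=\{\alpha:\delta(\alpha)>\delta^*\}$ is an interval: those measures have $h(\rho_n)=0$, so convex combinations with them only \emph{decrease} $h/\lambda$. What the paper does is mix with the full-dimension equilibrium state $\nu$: given $\alpha\in J$ (hence case~\ref{case2}, so there is an equilibrium measure $\mu_\alpha$ with $\int\phi\,d\mu_\alpha=\alpha$ and $h(\mu_\alpha)/\lambda(\mu_\alpha)=\delta(\alpha)$) and any $\beta$ between $\alpha$ and $\underline\alpha=\int\phi\,d\nu$, the appropriate convex combination of $\mu_\alpha$ and $\nu$ has $\int\phi=\beta$ and $h/\lambda\ge\delta(\alpha)>s_\infty$, so $\beta\in J$. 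This star-convexity at $\underline\alpha$ is what makes $J$ a single interval and confines the non-analyticity to at most two endpoints; your ``no interior plateau at $\dim\Lambda$'' remark does not address this.
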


\begin{proof} Each part will be proved separately.\\

Part 1 can be immediately deduced from Lemma \ref{lb}. \\

To prove part 2 we let $s=\dim\Lambda$ and note that  $\delta^* \leq s_{\infty} < s.$ Since $s_{\infty} < s$ then $P(-s\log |T'|)=0$ and $P(-t\log |T'|)>0$ for $s_{\infty}<t<s$ and $P(-t\log |T'|)=\infty$ for $\delta^*<t<s_{\infty}$. Denote by $\nu$ be the equilibrium state corresponding to $-s\log |T'|$ and $\underline{\alpha}=\int\phi\text{d}\nu$ (this can be $-\infty$, but if finite then $b(\underline{\alpha})=s$). Since $\delta(\alpha)$ is a continuous function of $\alpha$ we can define
$$\overline{\alpha}=\sup\{\alpha:\delta(\alpha)>s_{\infty}\}.$$
Now we assume that $\alpha\in (\underline{\alpha},\overline{\alpha})$ and so in particular  $\delta(\alpha)>\delta^*$. Since $\delta(\alpha)>\delta^*$ we are in either case \ref{case2} or \ref{case3} of Lemma \ref{options}. Therefore there  exist $q_0 \in \R$ such that $P(q_0(\phi-\alpha)-\delta(\alpha)\log |T'|)=0$. Let $q<0$  and note that there exists a compact invariant set $K \subset \Lambda$ and a $T$-invariant measure $\nu_{\alpha}$ such that
$\dim_H \nu_{\alpha} > \delta (\alpha)$ and $\int \phi \ d \nu_{\alpha} < \alpha$.  
We have that
\begin{eqnarray*}
P(q(\phi-\alpha)-\delta(\alpha)\log |T'|)\geq  h(\nu_{\alpha}) + q \left(\int\phi\text{d}\nu_{\alpha} -\alpha \right)-\delta(\alpha)\lambda(\nu_{\alpha}) \\
 = q \left(\int \phi \ d \nu_{\alpha}-\alpha \right)+  \lambda(\nu_{\alpha})  \left (\frac{h(\nu_{\alpha})}{  \lambda(\nu_{\alpha})} - \delta(\alpha) \right)
\geq 0
\end{eqnarray*}
and so $q_0\geq 0$. We also have that $P(-\delta(\alpha)\log |T'|)\geq 0$ with equality if and only if $\alpha=\underline{\alpha}$. By the definition of $\delta^*$ and noticing that $\delta(\alpha) > \delta^*$ there exists $q^*>0$ such that if  $q\in (0,q^*)$ then 
$$P(q(\phi-\alpha)-\delta(\alpha)\log |T'|)<\infty.$$
Thus if $\delta(\alpha)<\delta^*$ then $q\rightarrow P(q(\phi-\alpha)-\delta(\alpha)\log |T'|)$ is decreasing for $q$ sufficiently close to $0$ and we can only be in case \ref{case2} from Lemma \ref{options}. If $\alpha=\underline{\alpha}$ then $P(-\delta(\underline{\alpha})\log |T'|)=0$ and $\frac{\partial}{\partial q}P(q(\phi-\underline{\alpha})-s\log |T'|) \Big|_{q=0}=0$ which means we are also in  case \ref{case2} from Lemma \ref{options}.\\

To prove part 3 we first note that  $\delta^*=0$ .
Indeed, given $A>1$ there exists $\epsilon >0$ such that if $x \in (0, \epsilon)$ then
\begin{equation*}
\frac{-\phi(x)}{\log |T'(x)|} >A, 
\end{equation*}
that is, $\phi(x) < -A \log |T'(x)|$. If we denote by $P_{\epsilon} (\cdot)$ the pressure of 
$T$ restricted to the maximal $T-$invariant set in $(0, \epsilon)$ we have that  $P_{\epsilon} (\phi) \leq P_{\epsilon} (-A \log|T'|) < \infty$. Since the entropy of $T$ restricted to $(0,1) \setminus  (0, \epsilon)$ is finite and the potential $\phi$ restricted to this set is bounded, we can deduce that
$P(\phi)< \infty$. In particular, we obtain that $\delta^*=0.$

Let us consider first the case where $s_{\infty}<s$. In this setting the potential $-s\log |T'|$ has an associated equilibrium state $\nu$ with $h(\nu)/\lambda(\nu)=s$. If we have $\int\phi\text{d}\nu=-\infty$ then we can just apply the techniques from the previous part. If $\int\phi\text{d}\nu:=\underline{\alpha}>-\infty$ then for $\alpha\in (\underline{\alpha},\alpha_M)$ we can see that $b(\alpha)=\delta(\alpha)$ will be analytic by applying part 2. 
 For $\alpha<\underline{\alpha}$ we know for $0<\delta \leq s$
\begin{enumerate}
\item
$P(q(\phi-\alpha)-\delta\log|T'|)=\infty$ for all $q<0$,
\item
$P(-\delta\log |T'|)>0$,
\item
$P(q(\phi-\alpha)-\delta\log |T'|)\geq q(\underline{\alpha}-\alpha)-\delta\lambda(\nu)+h(\nu)>0$ for all $q>0 $. 
\end{enumerate}
Note that the first statement follows from  the assumption $\lim_{x\to 0}\frac{-\phi(x)}{\log |T'(x)|}=\infty$. Therefore we cannot be in cases \ref{case1} or \ref{case2} from Lemma \ref{options}. This means that we must be in case \ref{case3} from Lemma \ref{options} with $q_c=0$ and thus $\delta(\alpha)=s$.

We now assume that $s=s_{\infty}$. We start with the case where $P(-s\log |T'|)<0$. This means that if $\{\mu_n\}_{n\in\N}$ is a sequence of $T$-invariant measures such that $\lim_{n\rightarrow\infty}\frac{h(\mu_n)}{\lambda(\mu_n)}=s$ then $\lim_{n\rightarrow\infty}\lambda(\mu_n)=\infty$.  Indeed, assume by way of contradiction that  $\limsup_{n \to \infty} \lambda(\mu_n)=L< \infty$. Given $\epsilon >0$ there exists $N \in \N$ such that
\begin{equation*}
\frac{h(\mu_N)}{\lambda(\mu_N)} > s - \epsilon,
\end{equation*}
that is $h(\mu_N) -s \lambda(\mu_N) \geq -\epsilon L$. Since this holds for arbitrary values of $\epsilon$ we obtain that $P(-s \log |T'|) \geq 0$. This contradiction proves the statement. Now, since by assumption  $\lim_{x\to 0}\frac{-\phi(x)}{\log |T'(x)|}=\infty$ we have that  $\lim_{n\rightarrow\infty}\int \phi\text{d}\mu_n=-\infty$. Therefore for $\alpha\in (-\infty,\alpha_M)$ we must have  $\delta(\alpha)<s$. This means that $P(-\delta(\alpha)\log|T'|)=\infty$ and for all $q>0$ we have $P(q(\phi-\alpha)-\delta(\alpha)\log |T'|)<\infty$. Thus we must be in Case 2 from Lemma \ref{options} and the proof is complete.

We now assume that $P(-s\log|T'|)=0$ and that $\nu$ is the equilibrium state for $-s\log |T'|$. For any $\alpha\leq \int \phi\text{d}\nu$ we can argue exactly as when $s_{\infty}<s$ to show that $\delta(\alpha)=s$. For $\alpha>\int \phi\text{d}\nu$ we first need to show that $\delta(\alpha)<s$. To prove this, note that the function $q\to P(q(\phi-\alpha)-s\log |T'|)$ has a one sided derivative at $q=0$ with derivative $\int(\phi-\alpha)\text{d}\nu<0$. Thus by Lemma \ref{options} it is not possible that $\delta(\alpha)=s$. So $\delta(\alpha)<s$ and we can use the same arguments as when $P(-s\log |T'|)<0$.\\

We now turn to part 4 of the Lemma. In this case $\delta^*=s_{\infty}$. Indeed, given $t>0$ there exist $\epsilon >0$ such that if
$x \in (0, \epsilon)$ then $-t \log |T'(x)| < \phi(x)$. If we denote by $P_{\epsilon} (\cdot)$ the pressure of 
$T$ restricted to the maximal $T-$invariant set in  $(0, \epsilon)$ we have that  $P_{\epsilon} (-(t+ \delta) \log |T'|) \leq P_{\epsilon} (q \phi - \delta \log|T'| )$. Since the entropy of $T$ restricted to $(0,1) \setminus  (0, \epsilon)$ is finite and the potentials $\phi$  and $\log |T'|$ restricted to this set are bounded, we can deduce that for $q>0$ and any positive $t>0$ we have
$$P(-(t+ \delta) \log |T'|) \leq P(q \phi - \delta \log|T'| ).$$
Therefore $\delta^*=s_{\infty}$.

This implies that if $s=s_{\infty}$ then $\delta(\alpha)=s$ for all $\alpha\in (-\infty,\alpha_M)$. So we will assume that $s_{\infty}<s$. If $\delta(\alpha)>s_{\infty}$ then by our assumption on $\phi$ we have $P(q(\phi-\alpha)-\delta(\alpha)\log |T'|)<\infty$ for all $q\in\R$ and 
$$\lim_{q\to\pm\infty}P(q(\phi-\alpha)-\delta(\alpha)\log |T'|)=\infty,$$
and so we must be in case 2 from Lemma \ref{options}. So we need to show that the set
$$J=\left \{\alpha:\delta(\alpha)>s_{\infty} \right\}$$
is a single interval. Denote by  $\nu$ be the equilibrium measure corresponding to  $-s\log |T'|$ and by $\underline{\alpha}=\int \phi\text{d}\nu$. Let $\alpha\in J$ we know that there is an equilibrium measure  $\mu_{\alpha}$, with $\int\phi\text{d}\mu_{\alpha}=\alpha$ and $\frac{h(\mu_{\alpha})}{\lambda(\mu_{\alpha})}=\delta(\alpha)$.  Let $\beta \in \R$ be real number bounded by $\alpha$ and $\underline{\alpha}$. By considering convex combinations of $\mu_{\alpha}$ and $\nu$ we can see that $\delta(\beta)>\delta(\alpha)$. It therefore follows that $J$ is a single  interval and the only possible points of non-analycity for $\delta(\alpha)=b(\alpha)$ are the endpoints of $J$.

\end{proof}

\section{The Lyapunov Spectrum}\label{sh}
A special case of the Birkhoff spectrum, which has received a great deal of attention, is the Lyapunov spectrum. This can be included in our setting by considering $\phi(x)=-\log |T'(x)|$ and then the Lyapunov spectrum is given by $L(\alpha)=b(-\alpha)$. The present section is devoted not only to show how  previous work on Lyapunov spectrum can be deduced from ours, but also to present new results on the subject.  

In related setting there has been work for the Gauss map in \cite{KS, pw}; for fairly general piecewise linear systems \cite{KMS} and in \cite{JK} the spectra for ratios of functions is studied where one of the functions is $-\log |T'(x)|$.

If $T$ denotes an expanding-Markov-Renyi (EMR) map then the variational formula proved in Theorem \ref{main} holds for the Lyapunov spectrum. On the other hand,  neither of the assumptions for Theorem  \ref{analytic} are satisfied. However, it is still possible to describe in great detail the Lyapunov spectrum. 

Let $\phi(x)=-\log |T'(x)|$ and, as in the previous Section,  let
$$s_{\infty}=\inf\{\delta \in \R:P(\delta\phi)<\infty\}.$$
The following Theorem shows how our results fit in with the results in \cite{KS, JK, KMS}. 

\begin{teo}\label{lyapunov}
For all $\alpha\in (-\infty,\alpha_M)$ we have that
\begin{equation}\label{Marc}
b(\alpha)=\inf_{u}\left\{u+\frac{P(u\phi)}{-\alpha}\right\}.
\end{equation}
Furthermore 
\begin{enumerate}
\item
If $P(s_{\infty}\phi)=\infty$ then $b(\alpha)$ is real analytic on $(-\infty,\alpha_M)$. 
\item
If $P(s_{\infty}\phi)=k<\infty$ and $\mu_c$ is the equilibrium state for $s_{\infty}\phi$ then $b(\alpha)$ is analytic except at $\alpha_c= \int \phi\text{d}\mu_c$.  For $\alpha\leq\alpha_c$ we have that $b(\alpha)=s_{\infty}-\frac{k}{\alpha}$.
\end{enumerate}
\end{teo}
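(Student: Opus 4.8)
The plan is to treat Theorem \ref{lyapunov} as a specialization of the earlier machinery. Set $\phi=-\log|T'|$. The variational formula \eqref{Marc} should follow from Theorem \ref{main'} (or equivalently Theorem \ref{main}) together with the fact that for any $\mu\in\mathcal{M}_T$ with $\lambda(\mu)<\infty$ one has, by the variational principle for pressure, $h(\mu)\leq P(u\phi)-u\int\phi\,d\mu=P(u\phi)+u\lambda(\mu)$ for every $u\in\R$. Dividing by $\lambda(\mu)$ and using $\int\phi\,d\mu=\alpha$ (so $\lambda(\mu)=-\alpha$) gives $h(\mu)/\lambda(\mu)\leq u+P(u\phi)/(-\alpha)$ for all $u$, hence $b(\alpha)\leq\inf_u\{u+P(u\phi)/(-\alpha)\}$. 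For the reverse inequality I would either appeal to the standard Legendre-duality/equilibrium-state argument: if the infimum is attained at some $u_0$ with an equilibrium measure $\mu_{u_0}$ for $u_0\phi$, then $\int\phi\,d\mu_{u_0}=\alpha$ by Ruelle's formula and the inequality becomes an equality; and if the infimum is not attained (boundary behaviour as $u\to\pm\infty$ or near the critical exponent), one approximates using compact subsystems exactly as in the proof of Lemma \ref{lb}, producing measures $\mu_n$ with $\int\phi\,d\mu_n=\alpha$ and $h(\mu_n)/\lambda(\mu_n)$ converging to the infimum. This is the same circle of ideas already used repeatedly in Section \ref{R}.

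For the regularity statements I would identify $\delta^*$ and $s_\infty$ for this particular $\phi$. Note that here $P(q\phi-\delta\log|T'|)=P(-(q+\delta)\log|T'|)$, so $P(q\phi-\delta\log|T'|)<\infty$ for some $q>0$ iff $q+\delta>s_\infty$, which is possible for any $\delta\geq 0$; hence $\delta^*=0$. Thus Lemma \ref{options} applies with $\delta^*=0<\delta(\alpha)$ for all relevant $\alpha$ (one checks $\delta(\alpha)>0$ on $(-\infty,\alpha_M)$). In case (1), $P(s_\infty\phi)=\infty$: then I claim the function $q\mapsto P(q(\phi-\alpha)-\delta(\alpha)\log|T'|)=-\alpha q+P(-(q+\delta(\alpha))\log|T'|)$ is finite for all $q$ in the relevant range and its derivative vanishes somewhere, so by Lemma \ref{analytic2} we are in case \ref{case2} throughout $(-\infty,\alpha_M)$ and $b$ is analytic. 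One must rule out case \ref{case3}: if $P(s_\infty\phi)=\infty$ then near the critical $q$ the pressure blows up, so by the argument in the proof of Lemma \ref{options} the derivative would vanish before that, a contradiction. In case (2), $P(s_\infty\phi)=k<\infty$ with equilibrium state $\mu_c$: set $\alpha_c=\int\phi\,d\mu_c$. For $\alpha>\alpha_c$ one shows $\delta(\alpha)>s_\infty$ — hence case \ref{case2} and analyticity — while for $\alpha\leq\alpha_c$ one shows $\delta(\alpha)=s_\infty$ together with the explicit formula $b(\alpha)=s_\infty-k/\alpha$ (which is also what \eqref{Marc} gives since the infimum is then attained at $u=s_\infty$, the boundary of finiteness). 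The single possible non-analytic point is $\alpha_c$, exactly as in part 4 of the preceding Proposition.

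Concretely the steps in order are: (i) prove \eqref{Marc} via the variational principle plus a compact-approximation argument; (ii) compute $\delta^*=0$ and express $P(q(\phi-\alpha)-\delta\log|T'|)$ in terms of $P(-(q+\delta)\log|T'|)$; (iii) in case (1) show finiteness and existence of a critical point of the pressure for every $\alpha$, conclude via Lemmas \ref{options} and \ref{analytic2}; (iv) in case (2), analyze $\delta(\alpha)$ on either side of $\alpha_c$: show $\delta(\alpha)=s_\infty$ and derive $b(\alpha)=s_\infty-k/\alpha$ for $\alpha\le\alpha_c$ using that the equilibrium state $\mu_c$ realizes the dimension $s_\infty$ on its level $\alpha_c$ and that convex combinations degrade the ratio for $\alpha$ below $\alpha_c$ — actually one needs $\lambda(\mu_c)<\infty$ here, equivalently $\alpha_c>-\infty$, which should be addressed (if $\alpha_c=-\infty$ one is in case (1)-type behaviour); and then analyticity on $(\alpha_c,\alpha_M)$ from (iii).

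The main obstacle I expect is step (iv): carefully pinning down the value of $\delta(\alpha)$ for $\alpha\leq\alpha_c$ and proving the closed form $b(\alpha)=s_\infty-k/\alpha$. This requires knowing that no invariant measure with $\int\phi\,d\mu=\alpha$ beats the ratio $s_\infty-k/\alpha$ while sequences of measures approach it, and it is exactly the place where the non-compactness bites — the relevant "optimal" object is the equilibrium state $\mu_c$ at $\alpha_c$ rather than at $\alpha$, so one works with convex combinations of $\mu_c$ with high-entropy measures escaping to $\alpha\to-\infty$ and must verify the ratio limits correctly, mirroring the ergodic-optimization arguments invoked in Lemma \ref{options}. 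Establishing \eqref{Marc} when the infimum is not attained is a secondary technical point handled by the same compact-exhaustion device already in the paper.
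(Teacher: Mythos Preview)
Your plan is largely sound and overlaps substantially with the paper's proof; in particular your direct derivation of the upper bound in \eqref{Marc} from the variational principle (divide $h(\mu)\leq P(u\phi)+u\lambda(\mu)$ by $\lambda(\mu)=-\alpha$) is if anything cleaner than the paper's route, which instead obtains \eqref{Marc} as a byproduct of the case analysis in Lemma~\ref{options} and a direct study of the function $u\mapsto u+P(u\phi)/(-\alpha)$. Your treatment of case (1), and of the region $\alpha>\alpha_c$ in case (2), via Lemmas~\ref{options} and~\ref{analytic2} is essentially what the paper does.

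There is, however, a genuine error in step (iv). You assert that for $\alpha\leq\alpha_c$ one has $\delta(\alpha)=s_\infty$ and that $\mu_c$ ``realizes the dimension $s_\infty$ on its level $\alpha_c$''. Both claims are false: since $h(\mu_c)=k+s_\infty\lambda(\mu_c)$ one gets $h(\mu_c)/\lambda(\mu_c)=s_\infty-k/\alpha_c$, which strictly exceeds $s_\infty$ whenever $k>0$; and by Theorem~\ref{main'} one has $\delta(\alpha)=b(\alpha)=s_\infty-k/\alpha>s_\infty$ for every finite $\alpha<0$. Your proposed convex-combination argument is therefore aimed at the wrong target and, as written, would not produce the formula $s_\infty-k/\alpha$. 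The paper bypasses any measure-level argument here: for $\alpha<\alpha_c$ it sets $q_c=k/\alpha$ and checks directly that
\[
P\bigl(q_c(\phi-\alpha)+(s_\infty-k/\alpha)\phi\bigr)=P(s_\infty\phi)-\alpha q_c=k-k=0,
\]
while for $q<q_c$ the pressure is infinite because $q+s_\infty-k/\alpha<s_\infty$. This is exactly case~\ref{case3} of Lemma~\ref{options} with $\delta(\alpha)=s_\infty-k/\alpha$. Note that your own observation that the infimum in \eqref{Marc} is attained at the boundary $u=s_\infty$ is correct, and once \eqref{Marc} is established that alone already yields the explicit formula; the mistake is only in the auxiliary claims about $\delta(\alpha)$ and $\dim_H\mu_c$.
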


\begin{proof}
The formula for $b(\alpha)$ given in equation \eqref{Marc} was shown in a slightly different setting in \cite{KMS}. We show how it can also be derived from the methods in this paper. For each $\delta \in \R$ we will denote the function $f_{\delta}:\R \rightarrow \R$ by
$$f_{\delta}(q)=P(q\phi+\delta\phi)= P((q +\delta) \phi).$$
We first assume that $P(s_{\infty}\phi)=\infty$. Thus, we have 
\begin{equation*}
f_{\delta}(q)=
\begin{cases}
\infty & \text{ if } q \leq s_{\infty} -\delta ;\\
\text{finite }  & \text{ if } q > s_{\infty} -\delta.
\end{cases}
\end{equation*}
Therefore, for each $\alpha\in (-\infty,\alpha_M)$ and for each $\delta \in \R$ there exist $q(\delta) > s_{\infty} -\delta$ such that $f'_{\delta}(q(\delta))=\alpha$. Denote by $q(\delta(\alpha)) \in \R$ the corresponding value for $\delta(\alpha)$. We have that
\[\frac{d}{dq} P(q \phi + \delta(\alpha) \phi) \Big|_{q=q(\delta(\alpha))}= \alpha.\] 
Moreover
\[P(q(\delta(\alpha)) \phi + \delta(\alpha) \phi)=q(\delta(\alpha)) \alpha.\]
Thus, for all $\alpha\in (-\infty, \alpha_M]$ we are in case \ref{case2} from Lemma \ref{options}.
%
Therefore, the Lyapunov spectrum is real analytic  on $ (-\infty, \alpha_M]$.

Now let $\mathcal{P}:\R\rightarrow\R$ be defined by $\mathcal{P}(u):= u+\frac{P(u\phi)}{-\alpha}$. We can then deduce that $\mathcal{P}(q(\delta(\alpha))+\delta(\alpha))=\delta(\alpha)$ and $\mathcal{P}'(q(\delta(\alpha))+\delta(\alpha))=0$. Finally since the pressure is convex we must have that $\mathcal{P}''(q(\delta(\alpha)))>0$ and that $q(\delta(\alpha))$ will be the only minimum point for $\mathcal{P}$. Thus
$$\delta(\alpha)=\inf_{u}\left\{u+\frac{P(-u\log |T'|)}{\alpha}\right\}.$$

We will now assume that  $P(-s_{\infty}\phi)=k$. Let $\mu_c$ be the equilibrium measure associated to $-s_{\infty}\phi$. If $\alpha\geq\int\phi\text{d}\mu_c$ then we can argue exactly as in the previous case. 
For $\alpha<\alpha_c$ we let $q=\frac{k}{\alpha}$ and note that
$$P(q(\phi-\alpha)+(s_\infty-q)\phi)=P(s_{\infty}\phi)-\alpha q=k-k=0.$$
Note that if $q < k/\alpha$ then 
\[ P\left(q\left(\phi-\alpha\right)+\left(s_\infty-\frac{k}{\alpha}\right)\phi\right)  = \infty.\]
Therefore we are in case \eqref{case3} from Lemma \ref{options}. Thus $b(\alpha)=s_{\infty}-\frac{K}{\alpha}$. We again define $\mathcal{P}:\R\rightarrow\R$ by $\mathcal{P}(u):= u+\frac{P(-u\phi)}{\alpha}$. If $q \in \R$ is such that $\mathcal{P}(q)<\infty$ then  we denote by $\mu_q$ be the equilibrium measure associated with $q\phi$.   Note that $\mathcal{P}'(q)=1+{-\lambda(\mu_q)}{-\alpha}>0$. Thus the infimum of $\mathcal{P}$ will be achieved at $s_{\infty}$. We can then calculate. 
$$\inf_{u}\{\mathcal{P}(u)\}=\mathcal{P}(s_{\infty})=s_{\infty}-\frac{k}{\alpha}.$$
\end{proof}

We now turn our attention to   the shapes the Lyapunov spectrum can take. We start by giving a result which holds for all potentials $\phi\in\mathcal{R}$  
\begin{teo}\label{shape}
Let $T$ be an EMR map with $\dim_H \Lambda=s$ and  $\phi\in\mathcal{R}$, then
\begin{enumerate}
\item
If there exists a $T$-ergodic measure of maximal dimension $\mu$  and $\alpha^*=\int\phi{\text{d}\mu}$ then $b(\alpha)$ is non-increasing on $[\alpha^*,\alpha_M]$ and non-decreasing on $(-\infty,\alpha^*]$. (Note that it is possible that $\alpha^*=-\infty$.)
\item
If there exists no ergodic measure of maximal dimension then $b(\alpha)$ is non-decreasing on $(-\infty,\alpha_M]$.  
\end{enumerate}
\end{teo}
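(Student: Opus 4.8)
The plan rests on one elementary fact about convex combinations: if $\mu,\nu\in\mathcal{M}_T$ with $\lambda(\mu),\lambda(\nu)<\infty$ and $p\in[0,1]$, then $\nu_p:=p\mu+(1-p)\nu$ satisfies $h(\nu_p)=ph(\mu)+(1-p)h(\nu)$, $\lambda(\nu_p)=p\lambda(\mu)+(1-p)\lambda(\nu)$ and $\int\phi\,d\nu_p=p\int\phi\,d\mu+(1-p)\int\phi\,d\nu$, while the ratio $h(\nu_p)/\lambda(\nu_p)$ always lies between $h(\mu)/\lambda(\mu)$ and $h(\nu)/\lambda(\nu)$ (the ``mediant'' inequality, valid because the two denominators are positive). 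Write $s=\dim_H\Lambda$; I use that $b=\delta$ is continuous on $(-\infty,\alpha_M)$ by Lemma \ref{continuity}, that $b\le s$, and that $h(\mu)/\lambda(\mu)=s$ precisely when $\mu$ has maximal dimension.

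For part (1) let $\mu$ be an ergodic measure of maximal dimension; we may assume $\lambda(\mu)<\infty$ and set $\alpha^*=\int\phi\,d\mu$. Assume first $\alpha^*>-\infty$. Given $\alpha^*\le\alpha_1\le\alpha_2<\alpha_M$ and $\epsilon>0$, choose $\nu\in\mathcal{M}_T$ with $\int\phi\,d\nu=\alpha_2$, $\lambda(\nu)<\infty$ and $h(\nu)/\lambda(\nu)>b(\alpha_2)-\epsilon$. Since $\alpha_1$ lies between $\int\phi\,d\mu$ and $\int\phi\,d\nu$, pick $p$ with $\int\phi\,d\nu_p=\alpha_1$; then $h(\nu_p)/\lambda(\nu_p)$ lies between $s$ and $h(\nu)/\lambda(\nu)$, hence is $\ge h(\nu)/\lambda(\nu)>b(\alpha_2)-\epsilon$, so $b(\alpha_1)\ge b(\alpha_2)-\epsilon$. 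Letting $\epsilon\to0$ gives that $b$ is non-increasing on $[\alpha^*,\alpha_M)$, hence on $[\alpha^*,\alpha_M]$ by continuity; the mirror choice $\alpha_1\le\alpha_2\le\alpha^*$ (combining $\mu$ with a measure of average $\alpha_1$) gives that $b$ is non-decreasing on $(-\infty,\alpha^*]$. When $\alpha^*=-\infty$ there is nothing to prove on the left; on the right one replaces $\mu$ by measures $\mu_j$ on finite subsystems with $h(\mu_j)/\lambda(\mu_j)\to s$ and $\int\phi\,d\mu_j\to-\infty$ — these exist by the approximation property of the pressure (Proposition \ref{app}) applied to $-s\log|T'|$ together with $\int\phi\,d\mu=-\infty$ — and runs the same argument with $\mu_j$ (for $j$ large enough that $\int\phi\,d\mu_j<\alpha_1$), letting $j\to\infty$.

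For part (2) I would argue by contradiction. Suppose $b$ is not non-decreasing, so $b(\beta_1)>b(\beta_2)$ for some $\beta_1<\beta_2<\alpha_M$; then $b$ is non-constant, so $\delta^*<s$ (else $b\equiv\delta^*$ by Lemma \ref{lb}). First I would show, using continuity together with the convex-combination argument of part (1) (which prevents $b$ from recovering to the right of $\beta_2$ once it has dropped), that $\sup_{(-\infty,\alpha_M)}b$ is attained at some $\alpha^*$ which is an interior local maximum of $b$. Since $b(\alpha^*)>\delta^*$ we are in case \ref{case2} or \ref{case3} of Lemma \ref{options} at $\alpha^*$; the one-sided analysis of $q\mapsto P(q(\phi-\alpha^*)-b(\alpha^*)\log|T'|)$ at the boundary of its finiteness interval — exactly as in the proof of the Proposition in Section \ref{R} — excludes case \ref{case3}, so by Lemma \ref{analytic2} the function $b$ is real analytic near $\alpha^*$. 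Being a local maximum, $b'(\alpha^*)=0$; but differentiating the identity $P(q(\alpha)(\phi-\alpha)-b(\alpha)\log|T'|)\equiv0$ and using Ruelle's formula (Proposition \ref{r}) gives $b'(\alpha^*)=-q(\alpha^*)/\lambda(\mu_{q(\alpha^*)})$, so $q(\alpha^*)=0$. Then $\mu_{q(\alpha^*)}$ is the equilibrium state of $-b(\alpha^*)\log|T'|$ with $P(-b(\alpha^*)\log|T'|)=0$, whence $b(\alpha^*)=\dim_H\Lambda$ and $\mu_{q(\alpha^*)}$ is an ergodic measure of maximal dimension — contradicting the hypothesis of part (2).

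The hard part is the very first step of part (2): guaranteeing under the hypothesis that $\sup b$ is realised at a finite parameter and that this parameter is an interior local maximum (rather than $b$ merely increasing toward $\alpha_M$, or all of its ``height'' escaping as $\alpha\to-\infty$). This is where the non-compactness of the symbolic model genuinely bites and where the non-existence of a measure of maximal dimension is really used. The plan here is: take a dimension-maximising sequence of measures $\mu_n$; if the $\lambda(\mu_n)$ stay bounded, then a measure of bounded Lyapunov exponent (or bounded $\phi$-average) charges a neighbourhood of $0$ uniformly little by the R\'enyi condition, so the sequence is tight, and a weak-$*$ limit together with upper semicontinuity of the entropy and lower semicontinuity of the Lyapunov exponent on EMR repellers yields an invariant probability measure — hence an ergodic component — of maximal dimension, a contradiction; and if $\lambda(\mu_n)\to\infty$ one shows that $b$ stays strictly below $s$ on the whole of $(-\infty,\alpha_M)$ and then re-runs the case analysis of Lemma \ref{options}, using the sign of $q(\alpha)$ via Ruelle's formula, to conclude directly that $b$ is non-decreasing. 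Assembling these semicontinuity and escape-of-mass estimates cleanly is the real work; once a finite maximiser is available the remaining reduction to Lemma \ref{analytic2} and Ruelle's formula is routine.
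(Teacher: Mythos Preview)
Your treatment of part (1) is correct and essentially identical to the paper's: both interpolate between a near-optimal measure at $\alpha_2$ and the maximal-dimension measure $\mu$ (or, when $\alpha^*=-\infty$, a compactly supported approximant with small $\phi$-average), and use the mediant inequality.

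For part (2), however, your approach diverges from the paper's and has real gaps. The paper's argument is a two-line reduction to part (1): if no ergodic measure of maximal dimension exists then necessarily $s=s_\infty$ (since $s_\infty<s$ would force $P(-s\log|T'|)=0$ to have an equilibrium state, which \emph{is} such a measure), and once $s=s_\infty$ one can, exactly as in Proposition~\ref{lowb}, find measures $\mu_n$ supported on the tail subsystems $\pi(\{n,n+1,\dots\}^{\N})$ with $h(\mu_n)/\lambda(\mu_n)\to s$ and $\int\phi\,d\mu_n\to-\infty$; then the convex-combination argument of part (1) with $\alpha^*=-\infty$ applies verbatim. You missed this observation entirely.

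Your contradiction strategy, by contrast, runs into two obstacles you cannot repair in the stated generality. First, all the thermodynamic machinery you invoke (Proposition~\ref{r}, Lemma~\ref{analytic2}, Ruelle's formula, differentiating $P(q(\alpha)(\phi-\alpha)-b(\alpha)\log|T'|)=0$) requires real analyticity of the pressure, which in this paper is only available for $\phi\in\bar{\mathcal{R}}$; Theorem~\ref{shape} is stated for $\phi\in\mathcal{R}$. Second, and more seriously, your plan for producing a finite maximiser relies on ``tightness together with upper semicontinuity of the entropy'' on EMR repellers. The entropy map is \emph{not} upper semicontinuous on $\mathcal{M}_T$ for countable Markov shifts --- entropy can genuinely escape to infinity along a weak-$*$ convergent sequence --- so that step fails. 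Your alternative branch (when $\lambda(\mu_n)\to\infty$) is only sketched and would again need the analyticity hypothesis. The quasi-concavity you correctly extract from the mediant inequality does show $b$ is unimodal, but it does not by itself force the supremum to be attained at a finite point rather than as $\alpha\to-\infty$; that is precisely the case the paper's direct argument handles, and yours does not.
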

\begin{proof}
For the first part. Let $\alpha_1>\alpha_2>\alpha^*>-\infty$. For any $\epsilon>0$ there exists an invariant measure $\mu_1$ such that $\int\phi\text{d}\mu_1=\alpha_1$ and $h(\mu_1)\geq\lambda(\mu_1)(b(\alpha_1)-\epsilon)$. If $\alpha^*>-\infty$ we can then find $p\in (0,1)$ such that $\alpha_2=p\alpha^*+(1-p)\alpha_1$. Now let $\nu_1=p\mu+(1-p)\mu_1$. Thus $\int\phi\text{d}\nu_1=\alpha_2$ and 
$$h(\nu_1)\geq ps\lambda(\mu_1)+(1-p)(b(\alpha_1)-\epsilon)\lambda(\mu_1)\geq (b(\alpha_1)-\epsilon)\lambda(\nu_1).$$
Therefore $b(\alpha_2)\geq b(\alpha_1)$. The case where $\alpha_1<\alpha_2<\alpha^*$ is handed analogously. Now assume that $\alpha^*=\infty$ and $\alpha_1>\alpha_2$. Let $\alpha_M>\alpha_1>\alpha_2>-\infty$. By considering compact approximations we can find an invariant measure $\mu$ such that $\int\phi\text{d}\mu<\alpha_2$ and $\infty>h(\mu)\geq (b(\alpha_1)-\epsilon)\lambda(\mu)$. We can also find a measure $\mu_1$ such that $\int\phi\text{d}\mu_1<\alpha_1$ and $h(\mu_1)\geq (b(\alpha_1)-\epsilon)\lambda(\mu_1)$. To complete the proof we take a suitable convex combination of $\mu$ and $\mu_1$. 

In the case where there is no ergodic measure of maximal dimension we know that $s=s_{\infty}$. Again by considering compact approximations we can find a sequence of invariant measures $\mu_n$ such that $\lim_{n\rightarrow\infty}\phi\text{d}\mu_n=-\infty$ and $\lim_{n\rightarrow\infty}\frac{h(\mu_n)}{\lambda(\mu_n)}=s$. The proof now simply follows the first part when $\alpha^*=-\infty$.  
\end{proof}
We now return to the Lyapunov spectrum. It was shown in \cite{ik} that  in the hyperbolic case it can have inflection points and it clearly has to have such points in the non-compact case. An application of the methods used in Theorem \ref{shape} combined with results from Theorem \ref{lyapunov} allow us to prove in a simple way that as long as $s_{\infty}<s=\dim_H(\Lambda)$ the inflection points can only appear in the decreasing part of the spectrum. We present the proof in the non-compact case however it also holds in the compact, hyperbolic case.

\begin{coro}
Let $T$ be an EMR map such that  $s_{\infty}<s=\dim_H(\Lambda)$ then  the increasing part of the Lyapunov spectrum is concave.
\end{coro}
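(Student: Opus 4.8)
The plan is to recall the formula from Theorem~\ref{lyapunov}, namely that for $\phi(x)=-\log|T'(x)|$ we have
\[
b(\alpha)=\inf_{u}\left\{u+\frac{P(u\phi)}{-\alpha}\right\},
\]
and then to identify which values of $\alpha$ correspond to the increasing part of the Lyapunov spectrum. By Theorem~\ref{shape}, since $s_{\infty}<s=\dim_H(\Lambda)$ there exists an ergodic measure of maximal dimension $\mu$ (the equilibrium state for $-s\log|T'|$); setting $\alpha^{*}=\int\phi\,\mathrm d\mu=-\lambda(\mu)$, the spectrum $b$ is non-decreasing on $(-\infty,\alpha^{*}]$ and non-increasing on $[\alpha^{*},\alpha_M)$. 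So the increasing part is exactly the restriction of $b$ to $(-\infty,\alpha^{*}]$, and on this range $b$ is real analytic by Theorem~\ref{lyapunov} (we are in the case $s_{\infty}<s$, where either $P(s_{\infty}\phi)=\infty$ and $b$ is analytic everywhere, or $P(s_{\infty}\phi)<\infty$ with the unique non-analytic point at $\alpha_c=\int\phi\,\mathrm d\mu_c$; in the latter situation one checks $\alpha_c\le\alpha^{*}$, so analyticity still holds on the interior of the increasing part — and for $\alpha\le\alpha_c$ we have the explicit form $b(\alpha)=s_{\infty}-k/\alpha$, which is manifestly concave since $\alpha<0$).

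Next I would compute $b''$ on the analytic portion. For $\alpha$ in the interior of the increasing part we are in case~\ref{case2} of Lemma~\ref{options}: there is $q(\alpha)\in\R$ with $P(q(\alpha)\phi+\delta(\alpha)\phi)=q(\alpha)\alpha$ and $\frac{d}{dq}P((q+\delta(\alpha))\phi)\big|_{q=q(\alpha)}=\alpha$. Writing $v(\alpha)=q(\alpha)+b(\alpha)$, these say $P(v(\alpha)\phi)=q(\alpha)\alpha$ and $P'(v(\alpha)\phi)=\alpha$, where $P'$ denotes the derivative of $t\mapsto P(t\phi)$. From the first relation, $b(\alpha)=v(\alpha)-q(\alpha)=v(\alpha)-P(v(\alpha)\phi)/\alpha$. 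Differentiating and using $P'(v(\alpha)\phi)\,v'(\alpha)=v'(\alpha)\alpha$ together with the minimization property $\mathcal P'(v(\alpha))=0$, the first-order terms cancel and one is left with an expression for $b'(\alpha)$ and then $b''(\alpha)$ in terms of $v(\alpha)$, $v'(\alpha)$ and $P''$. The key structural facts are: $P$ is strictly convex as a function of $t$, so $P''(t\phi)>0$; the implicit function theorem gives $v'(\alpha)=1/P''(v(\alpha)\phi)>0$ (differentiating $P'(v(\alpha)\phi)=\alpha$); and on the increasing branch $\alpha<\alpha^{*}=-\lambda(\mu)<0$. Assembling these, the sign of $b''$ is controlled by $\alpha<0$ and one finds $b''(\alpha)\le0$.

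The main obstacle — really the only place any care is needed — is the bookkeeping in the differentiation: keeping track of the composite dependence $q(\alpha),v(\alpha)$ through the two simultaneous relations $P(v\phi)=q\alpha$, $P'(v\phi)=\alpha$, and verifying that after using the critical-point condition the surviving second-derivative term carries a factor of $1/\alpha$ (or $\alpha$) with the correct sign on the increasing branch. A clean way to organize this is to use the variable $u$ directly: set $g(u,\alpha)=u+P(u\phi)/(-\alpha)$, so $b(\alpha)=g(u(\alpha),\alpha)$ with $u(\alpha)$ the minimizer, hence $\partial_u g(u(\alpha),\alpha)=0$ and by the envelope theorem $b'(\alpha)=\partial_\alpha g(u(\alpha),\alpha)=P(u(\alpha)\phi)/\alpha^{2}$. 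Differentiating once more, $b''(\alpha)=-2P(u(\alpha)\phi)/\alpha^{3}+ P'(u(\alpha)\phi)u'(\alpha)/\alpha^{2}$, and since $\partial_u g=1-P'(u\phi)/\alpha=0$ at the minimizer gives $P'(u(\alpha)\phi)=\alpha$, while implicit differentiation of that identity gives $u'(\alpha)=1/P''(u(\alpha)\phi)>0$. On the increasing part $\alpha<0$ and $P(u(\alpha)\phi)\ge0$ (indeed $P(u(\alpha)\phi)=q(\alpha)\alpha=b(\alpha)\alpha-\alpha\,u'$... — more simply $P(u(\alpha)\phi)\ge P(s\phi)=0$ is not quite it; rather $P(u(\alpha)\phi)=\alpha\,q(\alpha)$ and on the increasing branch $q(\alpha)\le0$, so $P(u(\alpha)\phi)\ge0$), so $-2P(u(\alpha)\phi)/\alpha^{3}\le0$ and $P'(u(\alpha)\phi)u'(\alpha)/\alpha^{2}=\alpha/(\alpha^{2}P''(u(\alpha)\phi))=1/(\alpha P''(u(\alpha)\phi))\le 0$. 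Hence $b''(\alpha)\le0$ and the increasing part of the Lyapunov spectrum is concave.
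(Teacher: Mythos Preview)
Your approach---computing $b''$ directly from the infimum formula $b(\alpha)=\inf_{u}\{u+P(u\phi)/(-\alpha)\}$ via the envelope theorem---is different from the paper's, which instead interpolates between two measures $\mu_1,\mu_2$ realising $L(\lambda_1),L(\lambda_2)$ and differentiates the explicit quotient $t\mapsto \frac{th(\mu_2)+(1-t)h(\mu_1)}{t\lambda(\mu_2)+(1-t)\lambda(\mu_1)}$. Your route is arguably cleaner and more analytic, and it can be made to work, but as written it contains a genuine error.

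The problem is that you have misidentified the interval. Since $L(\lambda)=b(-\lambda)$, the \emph{increasing} part of the Lyapunov spectrum $L$ corresponds to the \emph{decreasing} part of $b$: indeed $L'(\lambda)=-b'(-\lambda)$, so $L$ increases on $[\lambda_m,\lambda^{*}]$ exactly where $b$ decreases on $[\alpha^{*},\alpha_M]$ (with $\alpha^{*}=-\lambda^{*}$, $\alpha_M=-\lambda_m$). You instead work on $(-\infty,\alpha^{*}]$, the increasing part of $b$, which is the \emph{decreasing} part of $L$. This mis-identification then forces a sign error in the final step: with $P(u(\alpha)\phi)\ge 0$ and $\alpha<0$ one has $\alpha^{3}<0$, hence $-2P(u(\alpha)\phi)/\alpha^{3}\ge 0$, not $\le 0$ as you claim. (The same slip appears in your remark that $s_{\infty}-k/\alpha$ is ``manifestly concave'': its second derivative is $-2k/\alpha^{3}>0$ for $k>0$, $\alpha<0$, so it is convex.)

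If you redo the computation on the correct interval $[\alpha^{*},\alpha_M]$, everything falls into place: there $b'(\alpha)=P(u(\alpha)\phi)/\alpha^{2}\le 0$ forces $P(u(\alpha)\phi)\le 0$, so the term $-2P(u(\alpha)\phi)/\alpha^{3}\le 0$; and the other term $u'(\alpha)/\alpha=1/(\alpha\,P''(u(\alpha)\phi))<0$ as you computed. Hence $b''\le 0$ on $[\alpha^{*},\alpha_M]$, i.e.\ $L''\le 0$ on $[\lambda_m,\lambda^{*}]$, which is the statement. Note also that since $\alpha_c\le\alpha^{*}$ (by convexity of $t\mapsto P(t\phi)$ and $s_{\infty}<s$), the possible non-analytic point never lies in this interval, so the differentiation is legitimate throughout.
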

\begin{proof}
Again we will let $\phi=-\log |T'|$ and note that in this case the Lyapunov spectrum satisfies  $L(\alpha)=b(-\alpha)$. 
Since $s_{\infty}<s$ there exists an ergodic measure of maximal dimension that we denote by $\mu$. Let $\int\phi\text{d}\mu=\alpha^*$. By Theorem \ref{shape} we know that $b(\alpha)$ is non-increasing on $[\alpha^*,\alpha_M)$. Moreover the proof of Theorem \ref{lyapunov} implies that for all $\alpha\in [\alpha^*,\alpha_M)$ there will exist a measure $\mu_{\alpha}$ such that $\lambda(\mu_{\alpha})=-\alpha$ and $\frac{h(\mu_{\alpha})}{\lambda(\mu_{\alpha})}=\delta(\alpha).$    

 We now introduce variables $\lambda_1,\lambda_2$ such that 
$$\inf \left\{ \int \log |T'| \ d \nu : \nu \in \mathcal{M}_T \right\}:= \lambda_m <\lambda_1 <\lambda_2 < \lambda^*:= \int \log |T'| \ d \mu.$$
 Thus we  we can find $\mu_1, \mu_2 \in \mathcal{M}_T$ such that $L(\lambda_1)=\dim_H \mu_1$, $L(\lambda_2)=\dim_H \mu_2$, $\lambda(\mu_1)=\lambda_1$ and $\lambda(\mu_2)=\lambda_2$.  
Let
 \[L(t):= \frac{th(\mu_2) +(1-t)h(\mu_1)}{t\lambda(\mu_2)+(1-t)\lambda(\mu_1)} \]
 for $t\in [0,1]$.
In order to study the convexity properties of the Lyapunov spectrum $L(\alpha)$ we compute the derivatives of the function $L(t)$ and note that $L(t\lambda_1+(1-t)\lambda_2)\geq L(t)$ with equality when $t=0,1$. The derivative of $L(t)$ is,
 \begin{equation} \label{der} 
L'(t)= \frac{h(\mu_2) \lambda(\mu_1)- h(\mu_1) \lambda(\mu_2)}{(t \lambda(\mu_2) +(1-t) \lambda(\mu_1))^2  }.
\end{equation}
The second derivative is given by:
\begin{equation} \label{2der}
L''(t)= \frac{2(h(\mu_2) \lambda(\mu_1)- h(\mu_1) \lambda(\mu_2))}{(t \lambda(\mu_2) +(1-t) \lambda(\mu_1))^3} \left( \lambda(\mu_1) -\lambda(\mu_2)  \right)
\end{equation}
Note that all the Lyapunov exponents are positive therefore the denominator of \eqref{2der} is positive. Since
\[  \frac{h(\mu_1)}{\lambda(\mu_1)} = \dim_HJ(\lambda_1) < \dim_H J(\lambda_2) = \frac{h(\mu_2)}{\lambda(\mu_2)}, \]
we have that $2(h(\mu_2) \lambda(\mu_1)- h(\mu_1) \lambda(\mu_2))>0$. Therefore the sign of \eqref{2der} is determined by the sign of $ \lambda(\mu_2) -\lambda(\mu_1) $.
Which by definition satisfies $\lambda_1= \lambda(\mu_1) <\lambda(\mu_2)=\lambda_2$. Therefore  $L''(t) <0$ and the function $L(\alpha)$ is concave on $[\lambda_m, \lambda^*]$. 
\end{proof}

In the case where $s=s_{\infty}$ then if $P(s_{\infty}\phi)=\infty$ then the above proof can be easily adapted to show the Lyapunov spectrum is concave. 
%



\section{Examples} \label{ejem}
An irrational number $ x \in (0,1)$  can be written as a continued fraction of the form
\begin{equation*}
x = \textrm{ } \cfrac{1}{a_1 + \cfrac{1}{a_2 + \cfrac{1}{a_3 + \dots}}} = \textrm{ } [a_1 a_2 a_3 \dots],
\end{equation*}
where $a_i \in \mathbb{N}$. For a general account on continued fractions see \cite{hw, k}. 
The Gauss map (see Example \ref{ga})  $G :(0,1] \to (0,1]$, is the interval map defined by 
\[G(x)= \frac{1}{x} -\left[ \frac{1}{x} \right]. \]
This map is closely related to the continued fraction expansion. 
Indeed, for $0 < x <1$ with $x=[a_1 a_2 a_3 \dots ]$ we have that
$a_1=[1/x], a_2=[1/Gx], \dots, a_n=[1/G^{n-1}x]$. In particular, the Gauss
map acts as the shift map on the continued fraction expansion,
\begin{equation*}
 a_n = \Big[1/G^{n-1}x \Big].
\end{equation*} 
The following result was initially proved by Khinchin \cite[p.86]{k} in the case where $\phi(n)<Cn^{1/2-\rho}$.
\begin{teo}[Khinchin] \label{kin}
Let $\phi :\N \to \R$ be a non-negative potential. If there exists constants
$C>0$ and $\rho >0$ such that for every $n \in \N$, 
\[  \phi(n) <  C n^{1 -\rho},              \]
then for Lebesgue almost every $x \in (0,1)$ we have that
\begin{equation*}
 \lim_{n \to \infty} \frac{1}{n} \sum_{i=0}^{n-1} \phi(G^i x) = \sum_{n=1}^{\infty}
 \left( \phi(n) \frac{\log \left( 1 + \frac{1}{n(n+2)}          \right)}{\log 2}             \right).
 \end{equation*}
\end{teo}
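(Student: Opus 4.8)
The plan is to reduce Khinchin's theorem to an application of the Birkhoff ergodic theorem with respect to the Gauss measure, combined with a dominated convergence / monotone convergence argument to handle the fact that $\phi$ is unbounded. Recall that the Gauss map $G$ preserves the probability measure $\mu_G$ given by $\text{d}\mu_G = \frac{1}{\log 2}\frac{\text{d}x}{1+x}$, and that this measure is ergodic and equivalent to Lebesgue measure on $(0,1)$. Hence Lebesgue-almost-every statement is the same as a $\mu_G$-almost-every statement, and it suffices to prove that for $\mu_G$-a.e.\ $x$,
\[
\lim_{n\to\infty}\frac1n\sum_{i=0}^{n-1}\phi(G^i x)=\int_{(0,1)}\tilde\phi\,\text{d}\mu_G,
\]
where $\tilde\phi(x):=\phi([1/x])=\phi(a_1(x))$, and then to identify this integral with the stated series.

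First I would compute the integral. Since $\tilde\phi$ is constant on each cylinder $I(n)=\{x:a_1(x)=n\}=\left(\frac{1}{n+1},\frac1n\right]$, we have
\[
\int_{(0,1)}\tilde\phi\,\text{d}\mu_G=\sum_{n=1}^\infty \phi(n)\,\mu_G\bigl(I(n)\bigr)
=\sum_{n=1}^\infty \phi(n)\,\frac{1}{\log 2}\int_{1/(n+1)}^{1/n}\frac{\text{d}x}{1+x}.
\]
A direct evaluation gives $\int_{1/(n+1)}^{1/n}\frac{\text{d}x}{1+x}=\log\frac{1+1/n}{1+1/(n+1)}=\log\frac{(n+1)^2}{n(n+2)}=\log\left(1+\frac{1}{n(n+2)}\right)$, which matches the claimed coefficients exactly. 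So the content of the theorem is precisely the validity of the pointwise ergodic theorem for the (possibly non-integrable in general, but here integrable) observable $\tilde\phi$.

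The key remaining point, and the main obstacle, is to verify that $\tilde\phi\in L^1(\mu_G)$ under the hypothesis $\phi(n)<Cn^{1-\rho}$; once this is established, Birkhoff's ergodic theorem applied to the ergodic system $(G,\mu_G)$ gives the almost-everywhere convergence to $\int\tilde\phi\,\text{d}\mu_G$ immediately, and the computation above finishes the proof. For integrability, I would estimate $\mu_G(I(n))$: since $\log\left(1+\frac{1}{n(n+2)}\right)\leq \frac{1}{n(n+2)}\leq \frac{1}{n^2}$, we get $\mu_G(I(n))\leq \frac{1}{n^2\log 2}$, and therefore
\[
\int_{(0,1)}\tilde\phi\,\text{d}\mu_G=\sum_{n=1}^\infty\phi(n)\,\mu_G(I(n))\leq \frac{C}{\log 2}\sum_{n=1}^\infty \frac{n^{1-\rho}}{n^2}=\frac{C}{\log 2}\sum_{n=1}^\infty \frac{1}{n^{1+\rho}}<\infty,
\]
using $\rho>0$. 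Thus $\tilde\phi$ is $\mu_G$-integrable and Birkhoff's theorem applies. (I note that the stronger classical result of Khinchin, with exponent $1/2-\rho$, comes from a more delicate argument — e.g.\ a variance estimate and Borel--Cantelli along a subsequence, exploiting near-independence of the digits — but for the statement as given here the $L^1$ ergodic theorem suffices and this is the route I would take.) Finally, I would remark that one can also recover this within the thermodynamic framework of the paper: $\mu_G$ is the equilibrium state of $-\log|G'|$ and hence the measure of maximal dimension for the Gauss map, so the statement is the assertion that the ``typical'' Birkhoff average equals $\int\tilde\phi\,\text{d}\mu_G$, consistent with Theorem~\ref{main}.
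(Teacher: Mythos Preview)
Your proposal is correct and takes essentially the same approach as the paper: the paper simply remarks that the theorem follows directly from Birkhoff's ergodic theorem applied to the locally constant potential with respect to the ergodic Gauss measure $\mu_G$, which is absolutely continuous with respect to Lebesgue. Your write-up fills in exactly the details the paper omits (the integrability check under the growth bound and the explicit computation of $\mu_G(I(n))$), and your closing remark about $\mu_G$ being the measure of maximal dimension mirrors the paper's own observation.
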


\begin{rem}
The above results directly follows form the ergodic theorem applied to the (locally constant) potential $\phi$ with respect to the (ergodic) Gauss measure,
\[\mu_G(A)= \frac{1}{\log 2}\int_A \frac{dx}{1+x}.\] 
The Gauss measure is absolutely continuous with respect to the Lebesgue measure. Moreover,  it is the measure of maximal dimension for the map $G$. 
\end{rem}

As a direct consequence of  Theorem \ref{main} we  can compute the Hausdorff dimension of the  level sets determined by the potential $\phi$ (strictly speaking we should apply our results to the potential $-\phi$, but clearly this does not make any difference).
Indeed, first note that  potentials satisfying the assumptions of Khinchin's Theorem such that $\lim_{n \to \infty} \phi(n)= \infty$ satisfy the assumptions of Theorem \ref{main}. That is, if  $\phi :(0,1) \to \R$ is a non-negative potential such that
\begin{enumerate}
\item if $x \in (0,1)$ and $x=[a_1, a_2 \dots]$ then $\phi(x)=\phi(a_1)$,
\item there exists constants $C>0$ and $\rho >0$ such that for every $n \in \N$ and $x \in (1/(n+1), 1/n)$ , 
\[  \phi(x)=\phi(n) <  C n^{1 -\rho},  \]
\item $\lim_{x \to 0} \phi(x)  = \infty$,        
\end{enumerate}
then $\phi \in \mathcal{R}$. Our first result in this setting is the following immediate Corollary to Theorem \ref{main}.
\begin{coro} \label{kinch}
Let $\phi \in \mathcal{R}$.
Then if we denote by
\[K(\alpha) := \left\{ x \in (0,1) :   \lim_{n \to \infty} \frac{1}{n} \sum_{i=0}^{n-1} \phi(G^ix) =\alpha  \right\}, \]
we have that
\begin{equation}
\dim_H(K(\alpha))=  \sup \left\{ \frac{h(\mu)}{\lambda(\mu)} :\mu \in \mathcal{M}_G, \int  \phi \, d \mu = \exp(\alpha) \textrm{ and } \lambda(\mu) < \infty \right\}.
\end{equation}
\end{coro}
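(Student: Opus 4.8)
The plan is to obtain this as a direct consequence of Theorem~\ref{main}. By Example~\ref{ga} the Gauss map $G$ is an EMR map, so the entire machinery of Section~\ref{st} and of Theorem~\ref{main} applies to it verbatim. Since $\phi\in\mathcal{R}$ and the set $K(\alpha)$ is, by definition, exactly the level set $J(\alpha)$ associated with the pair $(G,\phi)$, Theorem~\ref{main} immediately yields the claimed variational formula for $\dim_H(K(\alpha))$ at every $\alpha\in(-\infty,\alpha_M)$; for the potentials coming from the continued fraction expansion $\phi$ is unbounded, so $\alpha_M=+\infty$ and no value of $\alpha$ is left out.

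For this to be useful one then has to recognise that the concrete potentials of interest really do lie in $\mathcal{R}$. Here I would record the elementary observation that any $\phi$ which is constant on each interval $I(n)=(1/(n+1),1/n)$ --- in particular the potentials built from the first continued-fraction digit --- pulls back to a $\phi\circ\pi$ that is constant on every $1$-cylinder of $\Sigma$, so $V_n(\phi\circ\pi)=0$ for all $n$; such a $\phi$ therefore has summable variation (indeed it is weakly H\"older, so it even belongs to $\bar{\mathcal{R}}$), and it lies in $\mathcal{R}$ as soon as $\phi<0$ and $\phi(x)\to-\infty$ as $x\to 0$. When instead $\phi$ is non-negative with $\phi(x)\to+\infty$ (the case relevant to Khinchin's theorem) one applies the corollary to $-\phi-\varepsilon$ and invokes the affine-invariance remark following the definition of $\mathcal{R}$; this returns the formula for $\phi$ after an obvious affine change of the parameter $\alpha$.

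I do not expect any real obstacle: all the substance is already packed into Theorem~\ref{main}, and what is left is the identification $K(\alpha)=J(\alpha)$ together with the trivial bookkeeping just described. The only point deserving a word of care is how the affine normalisation rescales $\alpha$ --- this is exactly why, in the continued-fraction applications where the natural quantity is a multiplicative (geometric) average of the digits rather than an additive Birkhoff average, the constraint on the measure is written in the form $\int\phi\,d\mu=\exp(\alpha)$ rather than $\int\phi\,d\mu=\alpha$; it is a matter of normalisation, not of mathematical content.
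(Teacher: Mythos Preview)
Your core approach is exactly the paper's: the statement is presented there as ``an immediate Corollary to Theorem~\ref{main}'' with no further argument, and your identification $K(\alpha)=J(\alpha)$ together with Example~\ref{ga} is all that is needed.

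Two peripheral claims in your write-up are wrong, however. First, the assertion that $\alpha_M=+\infty$ is false: membership in $\mathcal{R}$ forces $\phi<0$, so every Birkhoff average is negative and $\alpha_M\le 0$. The unboundedness of $\phi$ is at $-\infty$, which affects $\alpha_m$, not $\alpha_M$. Theorem~\ref{main} therefore gives the formula only for $\alpha\in(-\infty,\alpha_M)$, and you should not claim more.

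Second, your attempt to justify the constraint $\int\phi\,d\mu=\exp(\alpha)$ as a ``normalisation'' coming from geometric means does not hold up. With $\phi\in\mathcal{R}$ one has $\int\phi\,d\mu<0$ for every $\mu\in\mathcal{M}_G$, while $\exp(\alpha)>0$ for every real $\alpha$; the constraint as printed is therefore vacuous and the supremum is $-\infty$. The $\exp(\alpha)$ is simply a typographical slip in the paper --- Theorem~\ref{main} applied to $K(\alpha)=J(\alpha)$ yields the constraint $\int\phi\,d\mu=\alpha$, and that is what should appear. Do not invent a rationale for it; flag it as a misprint.
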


A particular case of the above Theorem has received a great deal  of attention. If $\phi(x) = \log a_1$ then the Birkhoff average can be written as the so called \emph{Khinchin function}:
\[ k(x):=    \lim_{n \to \infty} \left( \log \sqrt[n]{a_1 \cdot a_2 \cdot  \ldots  \cdot a_n} \right).\]
This was first studied by Khinchin who proved that

\begin{prop}[Khinchin]
 Lebesgue almost every number is such that
\begin{equation*}
\lim_{n \to \infty} \left( \log \sqrt[n]{a_1 \cdot a_2 \cdot  \ldots  \cdot a_n} \right) =\log \left( \prod_{n=1}^{\infty}
\left( 1 + \frac{1}{n(n+2)}  \right)^{\frac{\log n}{\log 2}} \right)= 2.6 ...
\end{equation*}
\end{prop}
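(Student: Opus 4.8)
The plan is to read this off from Khinchin's Theorem (Theorem~\ref{kin}) by a judicious choice of potential. Take $\phi:\N\to\R$ given by $\phi(n)=\log n$, so that for $x=[a_1a_2\cdots]$ we have $\phi(a_1)=\log a_1$ and, since $G$ shifts the continued fraction expansion,
\begin{equation*}
\frac1n\sum_{i=0}^{n-1}\phi(G^ix)=\frac1n\bigl(\log a_1+\cdots+\log a_n\bigr)=\log\sqrt[n]{a_1a_2\cdots a_n}.
\end{equation*}
The hypothesis of Theorem~\ref{kin} holds: $\phi$ is non-negative and $\log n< C n^{1/2}$ for a suitable $C>0$, so one may take $\rho=1/2$. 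Hence Theorem~\ref{kin} applies and gives, for Lebesgue-a.e.\ $x\in(0,1)$,
\begin{equation*}
\lim_{n\to\infty}\log\sqrt[n]{a_1a_2\cdots a_n}=\sum_{n=1}^{\infty}\left(\log n\cdot\frac{\log\!\left(1+\frac{1}{n(n+2)}\right)}{\log 2}\right).
\end{equation*}

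Next I would rewrite the right-hand side as the logarithm of an infinite product. The series converges absolutely, since $\log\!\left(1+\tfrac{1}{n(n+2)}\right)=O(n^{-2})$ and $\sum_n n^{-2}\log n<\infty$; moreover the $n=1$ term vanishes. Therefore the logarithm may be moved outside term by term,
\begin{equation*}
\sum_{n=1}^{\infty}\frac{\log n}{\log 2}\,\log\!\left(1+\frac{1}{n(n+2)}\right)=\sum_{n=1}^{\infty}\log\!\left(\left(1+\frac{1}{n(n+2)}\right)^{\frac{\log n}{\log 2}}\right)=\log\prod_{n=1}^{\infty}\left(1+\frac{1}{n(n+2)}\right)^{\frac{\log n}{\log 2}},
\end{equation*}
the product converging for the same reason. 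This is the displayed equality. The stated numerical value then follows by directly estimating the product: it converges to Khinchin's constant $K_0=2.6854520\ldots$, which is the figure $2.6\ldots$ recorded in the statement (the value of the product, before applying the outer logarithm).

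As for where the content lies: there is essentially no obstacle once Theorem~\ref{kin} is available, the only remarks needed being that $\log n$ obeys a bound $Cn^{1-\rho}$ with $\rho>0$ (immediate) and the absolute convergence that legitimises passing from the sum to the product. If one instead wanted a self-contained argument not quoting Theorem~\ref{kin}, the natural route is Birkhoff's ergodic theorem applied to the ergodic Gauss measure $\mu_G$, for which the sole genuine check is that $\log a_1\in L^1(\mu_G)$; this holds because $\mu_G(\{a_1=n\})=\frac{1}{\log 2}\log\!\left(1+\frac{1}{n(n+2)}\right)\asymp n^{-2}$ and $\sum_n n^{-2}\log n<\infty$, and then $\int\log a_1\,d\mu_G$ is exactly the series above.
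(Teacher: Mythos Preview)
Your argument is correct and is exactly the approach the paper intends: the paper presents this Proposition as ``a particular case'' of Theorem~\ref{kin} with $\phi(x)=\log a_1$, and your derivation fills in precisely those details (verifying the growth bound and rewriting the sum as $\log$ of the product). Your parenthetical remark about the numerical value is also apt: the displayed $2.6\ldots$ is indeed Khinchin's constant $K_0$ (the product itself), whereas the quantity actually written, $\log K_0$, is approximately $0.988$, so the final ``$=2.6\ldots$'' in the statement is a slip.
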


Recently, Fan et al \cite{fl}  computed the Hausdorff dimension of the  level sets
determined by the Khinchin function. They obtained the following result
 $\int \log a_1 \, d \mu_G := \alpha_{SRB} < \infty$,
\begin{prop}\label{geom}
The function 
\[b(\alpha):= \dim_H \left( \left\{  x \in (0,1) :   \lim_{n \to \infty}  \log \left( \sqrt[n]{a_1 \cdot a_2 \cdot  \ldots  \cdot a_n} \right) = \alpha   \right\}  \right), \]  
is real analytic, it is strictly increasing
and strictly concave in the interval  $[\alpha_m, \alpha_{SRB})$ and it is decreasing  and has an inflection point in $( \alpha_{SRB}, \infty).$
\end{prop}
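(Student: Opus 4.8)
The plan is to deduce Proposition \ref{geom} from the general machinery developed in Sections 3, 4 and 5, applied to the Gauss map $G$ with the potential $\phi(x)=\log a_1(x)=\log[1/x]$. First I would check that $-\phi$ (equivalently, after the affine normalisation mentioned in the remark, some potential $a\psi+b\in\mathcal{R}$) lies in $\mathcal{R}$: it is locally constant, constant on each branch $I(n)=(1/(n+1),1/n)$, has summable variation, and $\phi(x)\to+\infty$ as $x\to 0$. Since $\phi$ is locally constant it is automatically weakly H\"older, so in fact $\phi\in\bar{\mathcal{R}}$, which is what is needed to invoke Theorem \ref{analytic}. By Theorem \ref{main} (via Theorem \ref{main'}) the spectrum $b(\alpha)=\dim_H K(\alpha)$ equals the conditional variational expression, so all that remains is to extract the qualitative shape.

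Next I would locate the key dichotomy from Section 4: the relevant asymptotic is $\lim_{x\to0}\frac{-\phi(x)}{\log|T'(x)|}$; for the Gauss map $\log|G'(x)|=-2\log x$ while $-\phi(x)=-\log[1/x]\sim\log x$, so this ratio tends to $0$. Hence we are in case (3) of Theorem \ref{analytic} / part 4 of the Proposition in Section \ref{R}: $b(\alpha)$ is analytic off at most two points, with $\delta^*=s_\infty$. For the Gauss map $s=\dim_H\Lambda=1$ (achieved by the Gauss measure $\mu_G$, which has $\lambda(\mu_G)<\infty$ and $h(\mu_G)/\lambda(\mu_G)=1$) while $s_\infty=1/2$ by the classical result of Good/Bosma–Jager–Wiedijk on the dimension of the set of badly approximable and divergent continued fractions; thus $s_\infty<s$. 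This puts us precisely in the subcase where there is an ergodic measure of full dimension, namely $\mu_G$, and $\alpha_{SRB}=\int\phi\,d\mu_G=\int\log a_1\,d\mu_G$, which is finite by Khinchin's computation. So part 3(b) of that Proposition (equivalently Theorem \ref{analytic}(1)) gives: $b(\alpha)=1$ for $\alpha\le\alpha_{SRB}$ — but wait, here the spectrum is genuinely non-constant to the left, because $\alpha_m>-\infty$; I would instead use that $b$ is real analytic on $(\alpha_m,\alpha_M)$ with a unique maximum $b(\alpha_{SRB})=1$, which is the content of combining analyticity off $\alpha_{SRB}$ with the fact that $\alpha_{SRB}$ is the abscissa of the full-dimension measure.

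Then I would establish monotonicity and convexity on the two sides. Monotonicity follows directly from Theorem \ref{shape}(1): since $\mu_G$ is the ergodic measure of maximal dimension with $\alpha^*=\alpha_{SRB}>-\infty$, $b$ is non-decreasing on $(-\infty,\alpha_{SRB}]$ and non-increasing on $[\alpha_{SRB},\alpha_M)=[\alpha_{SRB},\infty)$; combined with real analyticity and $b\not\equiv$ const, the monotonicity is strict away from the endpoint. For strict concavity on $[\alpha_m,\alpha_{SRB})$ I would invoke the Corollary to Theorem \ref{shape} (the one asserting the increasing part of the Lyapunov spectrum is concave when $s_\infty<s$): its proof, which uses convex combinations $\nu_t=t\mu_2+(1-t)\mu_1$ of the equilibrium measures realising $b(\alpha_1),b(\alpha_2)$ together with the sign analysis of $L''(t)$ in \eqref{2der}, applies verbatim with $\phi=\log a_1$ in place of $-\log|T'|$, giving $b''<0$ on the increasing branch. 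Finally, for the inflection point in $(\alpha_{SRB},\infty)$: $b$ is decreasing there and, by the regularity results, $b(\alpha)\to s_\infty=1/2$ as $\alpha\to\infty$ (the boundary behaviour forced by $\delta^*=s_\infty$), so $b$ has a horizontal asymptote at height $1/2$; a decreasing analytic function approaching a finite positive limit cannot be concave all the way (a concave decreasing function on a half-line would have to diverge to $-\infty$), so it must have at least one inflection point, and the structure of the pressure equation $P(q(\phi-\alpha)-\delta(\alpha)\phi_{\text{geo}})=0$ shows it has exactly one. The main obstacle is pinning down the two asymptotic inputs — that $s_\infty=1/2<1=\dim_H\Lambda$ for the Gauss map and that $\alpha_{SRB}<\infty$ — and then checking that the verbatim transcription of the proof of the Corollary to Theorem \ref{shape} is legitimate for this $\phi$ (in particular that all the measures $\mu_1,\mu_2$ there have finite Lyapunov exponent and that the convex-combination estimates go through); everything else is a direct appeal to Theorems \ref{main}, \ref{analytic}, \ref{shape} and Lemma \ref{options}.
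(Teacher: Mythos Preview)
The paper does not actually prove Proposition~\ref{geom}; it is quoted as a result of Fan, Liao, Wang and Wu \cite{fl}. So there is no ``paper's own proof'' to compare against, and the relevant question is whether your derivation from the paper's general machinery is correct. It is not, because the crucial asymptotic is miscomputed.

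With $\psi=-\log a_1\in\bar{\mathcal{R}}$ and $T=G$ the Gauss map one has $\log|G'(x)|=-2\log x$ and $\log a_1(x)=\log[1/x]\sim -\log x$ as $x\to 0$, hence
\[
\lim_{x\to 0}\frac{\psi(x)}{-\log|G'(x)|}=\lim_{x\to 0}\frac{-\log[1/x]}{2\log x}=\frac{1}{2},
\]
not $0$. Thus neither hypothesis of Theorem~\ref{analytic} (nor parts 3 or 4 of the more general Proposition in Section~\ref{R}) is satisfied; this is precisely the situation the paper flags as ``hard to say anything in general.'' Concretely, a first-moment computation shows $P(q\psi-\delta\log|G'|)<\infty$ iff $q+2\delta>1$, so $\delta^*=0$ (not $s_\infty$) while case~\ref{case3} of Lemma~\ref{options} is a genuine possibility for small $\delta(\alpha)$; ruling it out requires the kind of bespoke analysis carried out in \cite{fl}, not a black-box appeal to Theorem~\ref{analytic}.

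A second gap: the concavity Corollary after Theorem~\ref{shape} is stated and proved only for $\phi=-\log|T'|$, and its proof uses that the optimising measures $\mu_\alpha$ satisfy $\lambda(\mu_\alpha)=-\alpha$ (coming from Theorem~\ref{lyapunov}). For $\phi=-\log a_1$ you would need, for each $\alpha$ on the increasing branch, an optimiser $\mu_\alpha$ with $\int\phi\,d\mu_\alpha=\alpha$ \emph{and} control of $\lambda(\mu_\alpha)$ sufficient to make the sign analysis in \eqref{2der} go through; this is not ``verbatim'' and needs an argument.
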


An interesting family of related examples is given by letting $\gamma>0$ and considering the locally constant potential
$\phi_{\gamma}([a_1,a_2, \ldots])=-a_1^{\gamma}$. For this potential the Birkhoff average is given by
\begin{equation} \label{arithmetic}
\lim_{n \to \infty} \frac{1}{n} \sum_{i=0}^{n-1} \phi_{\gamma}(G^ix)= -\lim_{n \to \infty} \frac{1}{n} \left(a_1^{\gamma}+a_2^{\gamma} +\dots +a_n^{\gamma}\right),
\end{equation}
where $x=[a_1,a_2, \dots , a_n, \dots]$. 
Let us note that if $\gamma\geq 1$ then for Lebesgue almost every point $x \in (0,1)$ the limit defined in \eqref{arithmetic} is not finite. For $\gamma<1$ we let $G(\gamma):=\int\phi_{\gamma} \text{d}\mu_G>-\infty$.   Nevertheless for any $\gamma>0$ we have that $\phi_{\gamma} \in \mathcal{R}$, so the following result is a direct corollary of Theorem \ref{main}
\begin{coro} \label{ari}
Denote by
\[A(\alpha,\gamma) := \left\{ x \in (0,1) :   \lim_{n \to \infty} \frac{1}{n} \left(a_1^{\gamma}+a_2^{\gamma} +\dots +a_n^{\gamma}\right) =\alpha  \right\}, \]
we have that
\begin{equation}
\dim_H(A(\alpha,\gamma))=  \sup \left\{ \frac{h(\mu)}{\lambda(\mu)} :\mu \in \mathcal{M}_G, \int  A \, d \mu = -\alpha \textrm{ and } \lambda(\mu) < \infty \right\}.
\end{equation}
\end{coro}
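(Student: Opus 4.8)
The plan is to deduce Corollary \ref{ari} directly from Theorem \ref{main} applied to the Gauss map $G$ together with the locally constant potential $\phi_\gamma([a_1,a_2,\ldots])=-a_1^\gamma$. The first step is to verify that $\phi_\gamma\in\mathcal{R}$. Since every digit satisfies $a_1\geq 1$ we have $\phi_\gamma\leq -1<0$; since $\phi_\gamma$ depends only on the first digit, $V_n(\phi_\gamma\circ\pi)=0$ for all $n\geq 2$, so $\phi_\gamma\circ\pi$ has summable variation (and is in fact weakly H\"older); and as $x\to 0$ we have $a_1=[1/x]\to\infty$, whence $\phi_\gamma(x)=-a_1^\gamma\to-\infty$. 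Thus the three defining conditions of $\mathcal{R}$ hold, and $G$ is EMR by Example \ref{ga}.

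The second step is to rewrite the level sets. Using the identity $a_{i+1}=[1/G^i x]$ one obtains
\[
\frac{1}{n}\sum_{i=0}^{n-1}\phi_\gamma(G^i x)=-\frac{1}{n}\left(a_1^\gamma+a_2^\gamma+\cdots+a_n^\gamma\right),
\]
so the limit defining $A(\alpha,\gamma)$ equals $\alpha$ exactly when the Birkhoff average of $\phi_\gamma$ at $x$ equals $-\alpha$; that is, $A(\alpha,\gamma)$ coincides with the level set $J(-\alpha)$ associated with $\phi_\gamma$.

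It then remains to invoke Theorem \ref{main} (equivalently Theorem \ref{main'}) for $\phi_\gamma$ at level $-\alpha$, which gives
\[
\dim_H(A(\alpha,\gamma))=\dim_H(J(-\alpha))=\sup\left\{\frac{h(\mu)}{\lambda(\mu)}:\mu\in\mathcal{M}_G,\ \int\phi_\gamma\,d\mu=-\alpha\ \text{and}\ \lambda(\mu)<\infty\right\},
\]
which is the asserted formula after the change of variable $\alpha\mapsto-\alpha$. The only point requiring care is the hypothesis $-\alpha\in(-\infty,\alpha_M)$ of Theorem \ref{main}: because $\phi_\gamma\leq -1$ with equality attained along the fixed point whose digits are all $1$, here $\alpha_M=-1$, so the formula holds for every $\alpha>1$. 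I expect no genuine obstacle beyond this bookkeeping, since all the analytic content is already contained in Theorem \ref{main}.
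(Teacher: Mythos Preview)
Your proposal is correct and follows exactly the approach of the paper, which simply notes that $\phi_\gamma\in\mathcal{R}$ and then states that Corollary \ref{ari} is a direct consequence of Theorem \ref{main}. Your write-up in fact supplies more detail than the paper does (the explicit verification of the three conditions defining $\mathcal{R}$, the identification $A(\alpha,\gamma)=J(-\alpha)$, and the computation $\alpha_M=-1$ giving the range $\alpha>1$).
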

We can also use Theorem \ref{analytic} to give more detail about the function $\alpha  \to \dim_H(A(\alpha, \gamma))$. 
\begin{prop}
Let $\gamma >0$ then 
\begin{enumerate}
\item
If $\gamma\geq 1$ the function $\alpha  \to \dim_H(A(\alpha, \gamma))$ is real analytic and it is strictly increasing. 
\item
If $0 < \gamma<1$ the function $\alpha  \to \dim_H(A(\alpha,\gamma))$ is real analytic on $[G(\gamma),\alpha_M)$ and for $\alpha<G(\gamma)$ we have  $\dim_H(A(\alpha,\gamma))=1$.  
\end{enumerate}
\end{prop}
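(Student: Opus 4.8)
\textbf{Proof proposal.} The plan is to recognise this as a direct application of Theorem \ref{analytic} applied to the potential $\phi_\gamma$ (or strictly $-\phi_\gamma$, as noted in the text), so the main work is to verify which hypothesis of Theorem \ref{analytic} holds in each of the two cases, and in case (2) to identify the measure of full dimension and the value $\underline\alpha$. Recall that for the Gauss map $\log|G'(x)| = -2\log x$, so $-\log|G'([a_1,a_2,\dots])| \asymp 2\log a_1$ as $a_1\to\infty$, i.e. $\log|T'(x)| \sim 2\log a_1$. Meanwhile $\phi_\gamma([a_1,a_2,\dots]) = -a_1^\gamma$, so
\[
\lim_{x\to 0}\frac{\phi_\gamma(x)}{-\log|T'(x)|} = \lim_{a_1\to\infty}\frac{-a_1^\gamma}{2\log a_1} = -\infty
\]
for every $\gamma>0$. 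After passing to $-\phi_\gamma$ (which is the potential to which our theorems literally apply, and which lies in $\mathcal R$ — indeed $\bar{\mathcal R}$, since a locally constant potential on a countable Markov partition is weakly Hölder), the relevant ratio tends to $+\infty$, so we are always in case (1) or case (2) of Theorem \ref{analytic}.

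\textbf{Case $\gamma\geq 1$.} First I would argue that there is no ergodic measure of full dimension. Since $\dim_H\Lambda = \dim_H(0,1] = 1$ and the Gauss measure $\mu_G$ is the unique measure of maximal dimension (it is absolutely continuous), full dimension $1$ forces the measure to be $\mu_G$ itself; but $\int \phi_\gamma\,\mathrm d\mu_G = -\sum_n a_1^\gamma \mu_G(I(n)) \asymp -\sum_n n^\gamma n^{-2}$, which diverges for $\gamma\geq 1$. Hence $\mu_G$ does not have finite integral against $-\phi_\gamma$ in the relevant sense and there is no ergodic measure of full dimension among those with $\lambda(\mu)<\infty$ realising a finite level. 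Alternatively, and more cleanly, one notes that $s_\infty = s = 1$ in this case: since $\phi_\gamma$ is unbounded with $-\phi_\gamma\to+\infty$ faster than $\log|T'|$, the pressure $P(-\log|T'| )$ is attained only in a limit through measures with $\lambda(\mu)\to\infty$, forcing $s_\infty=1$, which is exactly the "no ergodic measure of full dimension" situation. Then part (2) of Theorem \ref{analytic} gives that $b(\alpha)$ is real analytic on all of $(-\infty,\alpha_M)$, and monotonicity follows from Theorem \ref{shape}(2) (no ergodic measure of maximal dimension $\Rightarrow$ $b$ is non-decreasing) — upgrading "non-decreasing" to "strictly increasing" uses the same convex-combination argument as in Theorem \ref{shape} together with the strict convexity of pressure, or simply the fact that an analytic non-decreasing function that is non-constant on an interval can only be locally constant on a nowhere-dense set, and here constancy is ruled out because distinct $\alpha$ are realised by distinct ergodic measures with distinct dimension ratios.

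\textbf{Case $0<\gamma<1$.} Here $\int\phi_\gamma\,\mathrm d\mu_G = -\sum_n n^\gamma\,\mu_G(I(n))$ converges (since $\gamma - 2 < -1$), so $G(\gamma) := \int\phi_\gamma\,\mathrm d\mu_G$ is finite, and $\mu_G$ is an ergodic measure of full dimension with $\lambda(\mu_G)<\infty$. Translating to $-\phi_\gamma$, we are in case (1) of Theorem \ref{analytic} with $\underline\alpha$ there equal to $G(\gamma)$ (up to the sign convention relating $A(\alpha,\gamma)$ to the level set of $-\phi_\gamma$). Theorem \ref{analytic}(1) then yields that $b(\alpha) = \dim\Lambda = 1$ for all $\alpha \le G(\gamma)$ and that $b(\alpha)$ is real analytic on $(G(\gamma),\alpha_M)$. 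Matching the level-set labels: $A(\alpha,\gamma)$ is the set where the Birkhoff average of $-\phi_\gamma$ equals $\alpha$, and $\int(-\phi_\gamma)\,\mathrm d\mu_G = -G(\gamma) =: \alpha_{\mathrm{SRB}}$; so in the coordinates of the Proposition the threshold is at $\alpha = G(\gamma)$ exactly as stated, with $\dim_H(A(\alpha,\gamma)) = 1$ below it.

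\textbf{Main obstacle.} The only non-routine point is the bookkeeping of signs and the precise identification, in each case, of whether $s_\infty < s$ or $s_\infty = s$ — equivalently, whether $\mu_G$ survives as a finite-integral full-dimension measure for the potential in question. The computation $\int |\phi_\gamma|\,\mathrm d\mu_G \asymp \sum_n n^{\gamma-2}$ settles this (convergent iff $\gamma<1$), and everything else is a direct quotation of Theorems \ref{main}, \ref{analytic} and \ref{shape}. I would therefore spend most of the written proof on: (i) verifying $\phi_\gamma\in\bar{\mathcal R}$ and the limit $\phi_\gamma/(-\log|T'|)\to-\infty$; (ii) the dichotomy $\gamma\geq 1$ vs. $\gamma<1$ via the above series; and (iii) invoking the appropriate part of Theorem \ref{analytic}, then reading off monotonicity/strictness from Theorem \ref{shape} and strict convexity of the pressure.
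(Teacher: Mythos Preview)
Your overall strategy matches the paper's exactly: verify that $\lim_{x\to 0}\phi_\gamma(x)/(-\log|T'(x)|)=\infty$ and then invoke Theorem~\ref{analytic}. The paper's proof is literally one line to this effect. Your case $0<\gamma<1$ is handled correctly.

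However, your case $\gamma\ge 1$ contains two genuine errors. First, the Gauss measure $\mu_G$ \emph{is} an ergodic measure of full dimension for the Gauss map, regardless of $\gamma$; it has $\lambda(\mu_G)=\pi^2/(6\log 2)<\infty$. What happens for $\gamma\ge 1$ is not that such a measure fails to exist, but that $\int\phi_\gamma\,d\mu_G=-\infty$. Thus you should invoke part~(1) of Theorem~\ref{analytic}, not part~(2): the analytic region $(\int\phi\,d\mu_G,\alpha_M)$ becomes all of $(-\infty,\alpha_M)$ and the constant region is empty. (Equivalently, in the language of Proposition~4.1 part~3, you land in case~(a).) For the same reason, monotonicity comes from Theorem~\ref{shape}(1) with $\alpha^*=-\infty$, not from Theorem~\ref{shape}(2). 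Second, your alternative claim that $s_\infty=1$ is simply false: $s_\infty$ depends only on the map, not on $\phi_\gamma$, and for the Gauss map $s_\infty=1/2$ (since $\sum_n |I_n|^t\asymp\sum_n n^{-2t}$ converges iff $t>1/2$). That alternative argument should be deleted.

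There is also minor sign confusion in your limit computation: since $\log|G'(x)|\sim 2\log a_1$, one has $-\log|G'(x)|\sim -2\log a_1<0$, so $\phi_\gamma(x)/(-\log|G'(x)|)\sim (-a_1^\gamma)/(-2\log a_1)\to +\infty$ directly, with no need to pass to $-\phi_\gamma$.
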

\begin{proof}
Since $\lim_{x\to 0}\frac{\phi_\gamma(x)}{-\log |T'(x)|}=\infty$ the Theorem immediately follows from the first part of Theorem \ref{analytic}.
\end{proof}
The sets $A(\alpha, 1)$ are related to the sets where the frequency of digits in the continued fraction is prescribed. The Hausdorff dimension of these sets was recently computed in \cite{flm}.

We conclude this section exhibiting explicit examples of dynamical systems and potentials for which the behaviour of the Birkhoff spectra is complicated.

A version of  following example appears in \cite{sa2}. Consider the partition of the interval $[0,1]$ given by the sequence of points of the form $x_n= 1 / (n (\log 2n)^2)$ together with the points  $\{0, 1\}$.  Let $T$ be the EMR map  defined 
on each of the intervals generated by this sequences to be linear, of positive slope and onto. Then
\begin{equation*}
P(-t \log |T'|)=
\begin{cases}
\infty & t < 1 \\
\text{finite}  & t \geq 1.
\end{cases}
\end{equation*}
Moreover,  $P(-\log|T'|)=1$ and for $t >1$ we have $P(-t \log |T'|) <0$. Therefore,  $\dim_H \Lambda=s=s_{\infty}=1$. Choose now $\phi\in\mathcal{R}$ such that $\lim_{x\to 0}\frac{\phi(x)}{-\log |T'(x)|}=0$. We can then see that for any $\delta<1$ we have  $P(q\phi-\delta\log |T'|)=\infty$ for all $q\in\R$ and so $\delta^*=1$. Therefore, it is a consequence of Lemma \ref{lb} that $b(\alpha)=1$ for all $\alpha\in (-\infty,\alpha_M]$.  Other examples of dynamical systems satisfying these assumptions can be found in \cite{mu}.

\section{Hausdorff dimension of the extreme level sets} \label{extr}
This section is devoted to study the Hausdorff dimension of one of the two extreme level sets. Since the potentials we have considered are not bounded the level set
\[J(-\infty):= \left\{  x \in (0,1): \lim_{n \to  \infty} \frac{1}{n} \sum_{i=0}^{n-1} \phi(T^i x) = -\infty          \right\} \]
can have positive Hausdorff dimension. In this section we compute it. 
\begin{teo}\label{extreme}
Let $\phi \in \mathcal{R}$ then
\begin{equation}\label{extremeq}
\dim_H(J(-\infty))=\lim_{\alpha\rightarrow-\infty} F(\alpha).
\end{equation}
\end{teo}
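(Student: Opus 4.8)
The plan is to prove the two inequalities separately, mirroring the structure already used for $J(\alpha)$ in the proof of Theorem~\ref{main'}, but with the target value $\alpha$ replaced by $-\infty$. Write $\beta := \lim_{\alpha\to-\infty} F(\alpha)$; this limit exists (possibly as $+\infty$) because $F$ is continuous on $(-\infty,\alpha_M)$ by Lemma~\ref{continuity} and, as the arguments in Section~4 show, $F$ is bounded above by $\dim_H\Lambda$.

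For the \textbf{lower bound}, fix $\alpha\in(-\infty,\alpha_M)$ and take, via the argument used in the lower bound of Theorem~\ref{main'}, an ergodic measure $\nu$ with $\int\phi\,d\nu = \alpha$, $\lambda(\nu)<\infty$ and $h(\nu)/\lambda(\nu)$ as close as we like to $F(\alpha)$. Such a $\nu$ is supported on the whole repeller but almost every point for $\nu$ has Birkhoff average exactly $\alpha$, hence does \emph{not} lie in $J(-\infty)$. To fix this I would concatenate: build a measure (or directly a Moran-type Cantor subset) that on longer and longer blocks looks like successively chosen ergodic measures $\nu_k$ with $\int\phi\,d\nu_k = \alpha_k \to -\infty$ and $h(\nu_k)/\lambda(\nu_k)\to\beta$, arranging the block lengths to grow fast enough that the Birkhoff averages along the constructed points tend to $-\infty$ while the local dimension at each such point stays $\ge \beta-\epsilon$. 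This is the standard ``w-measure''/concatenation construction (as in Gelfert--Rams or the $J'$ arguments of Barreira--Schmeling \cite{b.s}); the Renyi condition and bounded distortion guarantee that cylinder lengths behave multiplicatively, so the mass distribution principle applies and gives $\dim_H J(-\infty)\ge \beta-\epsilon$ for every $\epsilon>0$.

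For the \textbf{upper bound}, I would reuse the covering machinery of Section~3 verbatim. Note
\[
J(-\infty) \subset \bigcup_{N=1}^{\infty}\left\{x\in\Lambda: \frac{S_k\phi(x)}{k} < -M \text{ for all } k\ge N\right\}
\]
for every $M>0$; call the inner set $J(-M,N)$. Exactly as in Lemma~\ref{cover}, for each $k$ the cover $\mathcal{C}_k$ of $J(-M,N)$ by length-$k$ cylinders is finite (the argument there only used $S_k\phi/k$ bounded above, which holds here), one defines $s_k$ by $\sum_{\mathcal{C}_k}|I|^{s_k}=1$, sets $s^{(M)}:=\limsup_k s_k$, and obtains $\dim_H J(-M,N)\le s^{(M)}$ together with a sequence of invariant measures $\mu_k$ with $s_k - h(\mu_k)/\lambda(\mu_k)\to 0$ and $\int\phi\,d\mu_k < -M+2\epsilon$ for all large $k$. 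Hence $s^{(M)} \le \sup_{\gamma < -M + 2\epsilon} F(\gamma)$, and letting $M\to\infty$ gives $\dim_H J(-\infty) = \lim_M \dim_H J(-M,N) \le \lim_{\gamma\to-\infty} F(\gamma) = \beta$.

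The \textbf{main obstacle} is the lower bound: one must produce a single set of dimension $\ge\beta-\epsilon$ all of whose points have Birkhoff average $-\infty$, which is genuinely a non-compactness phenomenon. The delicate point is choosing the transition lengths between the blocks built from $\nu_k$ so that (i) the Birkhoff sums are dragged to $-\infty$, yet (ii) the ``loss'' incurred at the junctions is negligible for the local-dimension estimate. Because $\phi<0$, pushing the averages down is compatible with the construction rather than fighting it, which is what makes the scheme work; the bounded-distortion/Renyi hypotheses are exactly what is needed to control the Hausdorff dimension of the resulting Moran set from below via the mass distribution principle. One also has to check the boundary case $\beta = \dim_H\Lambda = +\infty$-type degeneracies (when $F$ is eventually constant equal to $\delta^*$), where the statement is immediate from Lemma~\ref{lb} and the remark that $J' $ and related sets have full dimension.
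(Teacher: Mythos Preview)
Your lower-bound plan is essentially the paper's: a Gelfert--Rams style concatenation/$w$-measure built from a sequence of ergodic measures $\nu_k$ with $\int\phi\,d\nu_k\to-\infty$ and $h(\nu_k)/\lambda(\nu_k)\to\beta$, with careful control of block lengths and distortion. Likewise, the paper's upper bound uses the same one-sided sets $J(\alpha,N)=\{x:S_k\phi(x)/k\le\alpha\text{ for all }k\ge N\}$ that you call $J(-M,N)$.

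There is, however, a genuine gap in your upper bound. You claim the length-$k$ cylinder cover $\mathcal C_k$ of $J(-M,N)$ is finite, saying ``the argument there only used $S_k\phi/k$ bounded above, which holds here.'' This is backwards. In the finiteness lemma preceding Lemma~\ref{cover}, what rules out large symbols is the \emph{lower} bound $S_k\phi(x)/k>\alpha-\epsilon$: since every term $\phi(T^jx)$ is negative, $S_k\phi(x)\le\phi(T^jx)$, so a lower bound on $S_k\phi$ forces a lower bound on each $\phi(T^jx)$, hence only finitely many admissible first symbols. For $J(-M,N)$ you have only the \emph{upper} bound $S_k\phi/k<-M$, which places no restriction on how negative individual terms can be; consequently $\mathcal C_k$ is genuinely infinite, and you cannot define $s_k$ by $\sum_{\mathcal C_k}|I|^{s_k}=1$ without further argument (the sum may be infinite for $s$ below the critical exponent of $-\log|T'|$).

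The paper repairs this with an extra idea you are missing: first prove $\beta\ge t^*$ where $t^*=\inf\{t:P(-t\log|T'|)<\infty\}$ (by looking at the tails $\pi(\{n,n+1,\dots\}^{\mathbb N})$), and then run a dichotomy. Either $\sum_{\mathcal C_k}|I|^{t^*+\epsilon}\le1$ for infinitely many $k$, giving $\dim_H J(\alpha,N)\le t^*+\epsilon\le\beta+\epsilon$ immediately; or this sum is eventually in $(1,\infty)$ (finiteness coming precisely from $t^*+\epsilon$ exceeding the critical exponent), in which case $t_k$ exists and one passes to a \emph{finite} subfamily $D_k\subset\mathcal C_k$ capturing mass $\ge1-\epsilon$ before building the Bernoulli measure. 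Without this critical-exponent step your covering argument does not get off the ground.

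A smaller point: continuity and boundedness of $F$ do not by themselves give existence of $\lim_{\alpha\to-\infty}F(\alpha)$; the paper invokes the eventual monotonicity of $F$ from Theorem~\ref{shape}.
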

\subsection*{Proof of Theorem \ref{extreme}}
To start we need a lemma showing that the limit on the right hand side of equation (\ref{extremeq}) does indeed exist.
\begin{lema}\label{existence}
There exists $s\in [0,1]$ such that $\lim_{\alpha\rightarrow-\infty} F(\alpha)=s$.
\end{lema}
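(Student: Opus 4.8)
The plan is to derive the existence of the limit from two ingredients: the boundedness of $F$, and a quasi-concavity (unimodality) inequality for $F$ on $(-\infty,\alpha_M)$. For the first, we may assume $\alpha_m=-\infty$ (otherwise the statement is trivial), so that $F$ is defined throughout $(-\infty,\alpha_M)$ as in Lemma \ref{continuity}; for every $\alpha$ in this range the defining supremum runs over a non-empty set of measures, each satisfying $0\le h(\mu)/\lambda(\mu)\le\dim_H\Lambda\le 1$ (recall $\lambda(\mu)\ge\log\xi>0$ by condition (2) of Definition \ref{maps}), hence $F(\alpha)\in[0,1]$.

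The key step is the unimodality property: for any $\beta_1<\beta_2<\beta_3$ in $(-\infty,\alpha_M)$,
$$F(\beta_2)\ge\min\{F(\beta_1),F(\beta_3)\}.$$
If $F(\beta_1)$ or $F(\beta_3)$ vanishes this is immediate, so assume both are positive. Given small $\epsilon>0$, I would pick $\mu_1,\mu_3\in\mathcal{M}_T$ with $\int\phi\,d\mu_i=\beta_i$, $\lambda(\mu_i)<\infty$ and $h(\mu_i)/\lambda(\mu_i)>F(\beta_i)-\epsilon$ (so $0<h(\mu_i)<\infty$ and $\lambda(\mu_i)>0$), choose $p\in(0,1)$ with $p\beta_1+(1-p)\beta_3=\beta_2$, and set $\nu=p\mu_1+(1-p)\mu_3\in\mathcal{M}_T$. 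Using that entropy is affine on $\mathcal{M}_T$ while $\mu\mapsto\lambda(\mu)$ and $\mu\mapsto\int\phi\,d\mu$ are linear, we get $\int\phi\,d\nu=\beta_2$ and
$$\frac{h(\nu)}{\lambda(\nu)}=\frac{p\,h(\mu_1)+(1-p)\,h(\mu_3)}{p\,\lambda(\mu_1)+(1-p)\,\lambda(\mu_3)}\ge\min\left\{\frac{h(\mu_1)}{\lambda(\mu_1)},\frac{h(\mu_3)}{\lambda(\mu_3)}\right\},$$
since a weighted average of the two ratios with the strictly positive weights $p\,\lambda(\mu_1)$ and $(1-p)\,\lambda(\mu_3)$ is at least the smaller of them. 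Thus $F(\beta_2)\ge h(\nu)/\lambda(\nu)>\min\{F(\beta_1),F(\beta_3)\}-\epsilon$, and letting $\epsilon\to 0$ gives the claim.

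To finish, I would run an interlacing argument. Put $s=\limsup_{\alpha\to-\infty}F(\alpha)$ and $a=\liminf_{\alpha\to-\infty}F(\alpha)$, both in $[0,1]$, and suppose $a<s$; fix $\epsilon$ with $0<\epsilon<(s-a)/2$. Choose a strictly decreasing sequence $\beta_n\to-\infty$ with $F(\beta_n)\to s$ and, after discarding finitely many terms, $F(\beta_n)>s-\epsilon$ for all $n$, and choose $\alpha_n\to-\infty$ with $F(\alpha_n)\to a$. For $n$ large we have $\alpha_n\le\beta_1$, hence $\alpha_n\in[\beta_{k+1},\beta_k]$ for some $k$; if $\alpha_n$ is an endpoint then $F(\alpha_n)=F(\beta_k)>s-\epsilon$, and otherwise the unimodality property gives $F(\alpha_n)\ge\min\{F(\beta_k),F(\beta_{k+1})\}>s-\epsilon$. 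Letting $n\to\infty$ yields $a\ge s-\epsilon$, contradicting $s-a>2\epsilon$. Hence $a=s$, so the limit exists and equals some $s\in[0,1]$.

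I do not expect a serious obstacle here: the argument is elementary once the unimodality inequality is set up, and the only points needing attention are the legitimacy of the convex-combination computation (finiteness of $\lambda(\mu_i)$, affineness of the entropy) and the trivial bookkeeping when a value of $F$ is $0$. As an alternative one could bypass the unimodality step and invoke Theorem \ref{shape}: since $b(\alpha)=F(\alpha)$ by Theorem \ref{main'}, that theorem shows $b$ is monotone on an interval of the form $(-\infty,c)$ in each of its two mutually exclusive cases, and a bounded monotone function has a limit at $-\infty$; but I would keep the self-contained argument above as the main line of proof.
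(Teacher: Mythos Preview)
Your proof is correct but takes a different route from the paper. The paper's argument is a one-line appeal to Theorem \ref{shape}: since $b(\alpha)=F(\alpha)$ is monotone for $\alpha$ sufficiently negative (non-decreasing on $(-\infty,\alpha^*]$ in case (1), on all of $(-\infty,\alpha_M]$ in case (2)), a bounded monotone function has a limit. You instead establish the quasi-concavity inequality $F(\beta_2)\ge\min\{F(\beta_1),F(\beta_3)\}$ directly, via convex combinations of near-optimal measures and the mediant inequality, and then run an elementary interlacing argument to force $\liminf=\limsup$. Your approach is self-contained---it needs neither Theorem \ref{shape} nor the identification $F=b$ from Theorem \ref{main'}---and the quasi-concavity you isolate is in fact the mechanism underlying the proof of Theorem \ref{shape} itself, so you are extracting precisely what is required. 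The paper's route is shorter given the machinery already in place; yours would survive even if Section \ref{sh} were deleted. You also correctly flag the paper's approach as an alternative in your final paragraph.
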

\begin{proof}
The limit clearly exists since by Theorem \ref{shape} the function $\alpha\to F(\alpha)$ is monotone when $-\alpha$ is sufficiently large.
\end{proof}
In order to prove the upper bound, 
$$\dim_H(J(-\infty))\leq\lim_{\alpha\rightarrow-\infty} F(\alpha),$$
we first give a uniform lower bound for $\lim_{\alpha\rightarrow-\infty} F(\alpha)$.
\begin{prop}\label{lowb}
Let $t^*$ be the critical value for the pressure of the potential $-\log |T'|$. We have that
$\lim_{\alpha\rightarrow-\infty} F(\alpha) \geq t^*$.
\end{prop}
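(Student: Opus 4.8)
The plan is to show that for every $t<t^{*}$ one can find $T$-invariant measures whose dimension ratio $h(\mu)/\lambda(\mu)$ exceeds $t$ and whose integrals $\int\phi\,d\mu$ are arbitrarily negative; feeding these into the definition of $F$ at a sequence of parameters tending to $-\infty$, using the existence of the limit from Lemma~\ref{existence}, and then letting $t\uparrow t^{*}$ will give the bound. Throughout, recall that $t^{*}$ is precisely the value $\inf\{s:P(-s\log|T'|)<\infty\}$, so that $P(-t\log|T'|)=\infty$ for every $t<t^{*}$.

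First I would localise near $0$. For $M\in\N$ let $\Lambda_{M}\subset\Lambda$ be the repeller of the countable Markov subsystem of $T$ obtained by keeping only the branches indexed by $n\ge M$; this is again a repeller of the same type. Since $0$ is the unique accumulation point of the endpoints of $\{I_{i}\}$, the set $\bigcup_{n\ge M}I_{n}$ shrinks to $\{0\}$ as $M\to\infty$, so $c_{M}:=\sup\{\phi(x):x\in\bigcup_{n\ge M}I_{n}\}\to-\infty$ because $\lim_{x\to0}\phi(x)=-\infty$. Every $\mu\in\mathcal{M}_{T}$ carried by $\Lambda_{M}$ therefore satisfies $\int\phi\,d\mu\le c_{M}$, and if in addition $\mu$ is supported on a compact (hence finite-alphabet) invariant subset of $\Lambda_{M}$ then also $\lambda(\mu)<\infty$ and $\int\phi\,d\mu\in\R$.

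The central point is that passing to $\Lambda_{M}$ does not lower the critical exponent. Using the Renyi condition and bounded distortion, the finiteness of $P(-s\log|T'|)$ is governed by the convergence of the tail sum $\sum_{n}|I_{n}|^{s}$, whose divergence is unaffected by discarding finitely many symbols; hence $P_{\Lambda_{M}}(-t\log|T'|)=\infty$ for every $t<t^{*}$ and every $M$, equivalently $\dim_{H}\Lambda_{M}\ge t^{*}$. Applying the approximation property (Proposition~\ref{app}) inside $\Lambda_{M}$, together with Bowen's formula on compact subsystems, produces a compact invariant set $K_{M}\subset\Lambda_{M}$ and an ergodic measure $\mu_{M}$ supported on $K_{M}$ with $h(\mu_{M})/\lambda(\mu_{M})>t$, $\lambda(\mu_{M})<\infty$, and $\alpha_{M}:=\int\phi\,d\mu_{M}\le c_{M}$.

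To conclude, observe that $\mu_{M}$ is admissible in the supremum defining $F(\alpha_{M})$, so $F(\alpha_{M})\ge h(\mu_{M})/\lambda(\mu_{M})>t$; and $\alpha_{M}\le c_{M}\to-\infty$ forces $\alpha_{M}\to-\infty$. By Lemma~\ref{existence} the limit $\lim_{\alpha\to-\infty}F(\alpha)$ exists, so it equals $\lim_{M\to\infty}F(\alpha_{M})\ge t$; since $t<t^{*}$ was arbitrary, $\lim_{\alpha\to-\infty}F(\alpha)\ge t^{*}$. The only genuinely nontrivial step is the third one — verifying that the cofinite subsystem $\Lambda_{M}$ retains the critical exponent $t^{*}$ (equivalently, that one can realise dimension close to $t^{*}$ using only high-index branches); everything else is bookkeeping plus direct appeals to the approximation property, Bowen's formula, and the already-established monotonicity and continuity of $F$.
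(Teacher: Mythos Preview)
Your approach is essentially the same as the paper's: both arguments pass to the cofinite subsystems $\Lambda_{M}=\pi(\{M,M+1,\dots\}^{\N})$, observe that these retain critical exponent (equivalently, Hausdorff dimension) at least $t^{*}$, extract on each $\Lambda_{M}$ a measure with $h/\lambda$ close to $t^{*}$, and note that any such measure has $\int\phi\,d\mu\le c_{M}\to-\infty$ because $\phi\to-\infty$ at $0$. The paper states the key step (``$\dim_{H}\Lambda_{n}\ge t^{*}$ by the definition of $t^{*}$'') without elaboration, whereas you spell out the tail-sum/bounded-distortion reason and invoke the approximation property explicitly; but the skeleton is identical.
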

\begin{proof}
Consider the sets $\Lambda_n=\pi(\{n,n+1,\ldots\}^\N)$. Note that  
$\dim_H \Lambda_n\geq t^*$ by the definition of $t^*$. However, for any $\epsilon>0$ the set $\Lambda_n$ will support a $T$-invariant measure $\mu_n$ with $\lambda(\mu_n)<\infty$, $\frac{h(\mu_n)}{\lambda(\mu_n)}\geq\dim_H \Lambda_n-\epsilon$ and $\int\phi \ d\mu_n>-\infty$. We also have that $\lim_{n\rightarrow\infty}\int \phi\ d \mu_n=-\infty$. The result now follows.
\end{proof}
We now fix $\alpha\in\R$ and consider the set
$$J(\alpha,N)=\left\{x\in\Lambda:\frac{S_k\phi(x)}{k}\leq\alpha\text{, for every }k\geq N\right\}.$$
It is clear that $J(-\infty)\subset\cup_{N\in\N}J(\alpha,N)$. Thus it suffices to show that for all $N\in\N$ 
$$\dim_H J(\alpha,N)\leq\sup_{\beta>\alpha} F(\beta).$$

Fix $N\in\N$ and for $k\in\N$ let
$$C_k(\alpha)=\{I(i_1,\ldots,i_k):I(i_1,\ldots,i_k)\cap J(\alpha,N)\neq\emptyset\}.$$
Let $\epsilon>0$ and note that if for infinitely many $k$ we have
$$\sum_{I(i_1,\ldots,i_k)\in C_k(\alpha)}|I(i_1,\ldots,i_k)|^{t^*+\epsilon}\leq 1$$
then $\dim_H J(\alpha,N)\leq t^*+\epsilon\leq\lim_{\alpha\rightarrow-\infty} F(\alpha)+\epsilon$. So we may assume that there exists $K\in\N$ such that for $k\geq K$
$$1<\sum_{I(i_1,\ldots,i_k)\in C_k(\alpha)}|I(i_1,\ldots,i_k)|^{t^*+\epsilon}<\infty.$$
Note that the sum must be convergent because $t^*+\epsilon$ is greater than the critical value $t^*$.
Thus for each $k\geq K$ we can find $t_k$ such that
$$\sum_{I(i_1,\ldots,i_k)\in C_k(\alpha)}|I(i_1,\ldots,i_k)|^{t_k}=1.$$
It follows that $\dim_H J(\alpha,N)\leq\limsup_{k\rightarrow\infty}t_k$. To complete the proof we need to relate $t_k$ to the entropy and Lyapunov exponent of an appropriate $T$-invariant measure.

Since $C_k(\alpha)$ contains infinitely many cylinders we need to consider a finite subset of $C_k(\alpha)$, that we denote by $D_k(\alpha)$, where
$$\sum_{I(i_1,\ldots,i_k)\in D_k(\alpha)}|I(i_1,\ldots,i_k)|^{t_k}=A\geq 1-\epsilon.$$
As in the proof of Lemma \ref{cover} we let $\eta_k$ be the $T^k$ invariant measure which assign each cylinder in $D_k(\alpha)$ the measure $\frac{1}{A}|I(i_1,\ldots,i_k)|^{t_k}$. Note that there will exist $C>0$ such that for all $k\geq K$ the Lyapunov exponent $\lambda(\eta_k, T^{k+1})$ satisfies 
$$\left|-\lambda(\eta_k,T_k)-\frac{1}{A}\sum_{I(i_1,\ldots,i_k)\in D_k(\alpha)} |I(i_1,\ldots,i_k)|^{t_k}\log |I(i_1,\ldots,i_k)|\right|\leq C.$$
Computing the entropy with respect to $T^k$ of $\eta_k$ gives
$$h(\eta_k,T^k)=\sum_{I(i_1,\ldots,i_k)\in D_k(\alpha)} \frac{t_k}{A}|I(i_1,\ldots,i_k)|^{t_k}\log |I(i_1,\ldots,i_k)|+\log A.$$
Since $A\geq 1-\epsilon$ and $\lambda(\eta_k,T^k)\geq\xi^{k}$ it follows that $\lim_{k\rightarrow\infty}\frac{h(\eta_k,T^k)}{\lambda(\eta_k,T^k)}-t_k=0$.
Since $\eta_k$ is compactly supported we know that $\int\phi\ d \eta_k>-\infty$ and by the distortion property $\limsup_{k\rightarrow\infty}\int\phi\ d \eta_k\leq\alpha$. To finish the proof we simply let $\mu_k=\sum_{i=0}^{k-1}\eta_k\circ T^{-i}$.

To prove the lower bound we use the method of constructing a w-measure as done by Gelfert and Rams in \cite{GR}. We will let $\lim_{\alpha\rightarrow-\infty} F(\alpha)=s$ and start by observing that
there exists a sequence of ergodic measures $\{\mu_n\}_{n\in\N}$ where $\lim_{n\rightarrow\infty}\int\phi\ d \mu_n=-\infty$, $\lambda(\mu_n)<\infty$, for all $n\in\N$, $\frac{\lambda(\mu_{n+1})}{\lambda(\mu_n)}\leq 2$ and $\lim_{n\rightarrow\infty}\frac{h(\mu_n)}{\lambda(\mu_n)}=s$.
We now let $\epsilon>0$ and assume that for all $n$ $\frac{h(\mu_n)}{\lambda(\mu_n)}\geq s-\epsilon$. For all $i\in\N$ by Egorov's Theorem we can find $\delta>0$ and $n_i\in\N$ such that there exists a set $X_i(\delta)$ where for all $n\geq n_i$ and $x\in X_i(\delta)$
\begin{enumerate}
\item
$S_n\phi(x)\leq n(\alpha_i+\epsilon)$.
\item
$(s+\epsilon)(-\log|C_n(x)|)\leq-\log\mu_i(C_n(x))\geq (s-\epsilon)(-\log|C_n(x)|)$
\item
$-\log|C_n(X)|\in (n(\lambda(\mu_i)-\epsilon),n(\lambda(\mu_i)+\epsilon))$.
\item
$\mu_i(X_i(\delta))\geq 1-\delta$.

\end{enumerate}  
 We can let $k_1=n_1+\left[\frac{n_2}{\delta}\right]+1$ and $k_{i}=\left[\frac{\left(\sum_{l=1}^{i-1}k_{l}\right)+\lambda(\mu_{i+1})n_{i+i}}{\delta}\right]+1$. We let $Y_i$ be all $k_i$ level cylinders with nonzero intersection with $X_i(\delta)$. We then define $Y$ to be the space such that $x\in Y$ if and only if $T^{\sum_{l=1}^{j-1} k_l}(x)\in Y_j$ for all $j\in\N$. We will need to consider the size of $n-$th level cylinders for points in $Y$. We get the following lemma
\begin{lema}\label{cylinder}
 There exists $K(\epsilon)>0$ such that $\lim_{\epsilon\to 0}K(\epsilon)=0$  
and for all $x\in Y$ and $n$ sufficiently large
$$\nu(B(x,|C_n(x)|))\leq (1+K(\epsilon))^n\nu(|C_{n+1}(x)|).$$
\end{lema}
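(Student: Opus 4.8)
The estimate is designed to feed the mass distribution principle: since $\tfrac1n\log\bigl((1+K(\epsilon))^n\bigr)=\log(1+K(\epsilon))\to 0$ as $\epsilon\to 0$, the factor $(1+K(\epsilon))^n$ is subexponential and hence negligible against the exponential decay of $\nu(C_{n+1}(x))$ and of $|C_{n+1}(x)|$ coming from the Egorov properties, so the lemma will yield $\liminf_{r\to 0}\frac{\log\nu(B(x,r))}{\log r}\ge s-\mathrm{O}(\epsilon)$ for $x\in Y$ and hence $\dim_H J(-\infty)\ge s$ once $\epsilon\to 0$. To prove it, the plan is to decompose
\[
\nu\bigl(B(x,|C_n(x)|)\bigr)\le\nu\bigl(C_n(x)\bigr)+\nu\bigl(\mathcal N_n(x)\bigr),
\]
where $\mathcal N_n(x)$ is the union of all level-$n$ cylinders other than $C_n(x)$ that intersect $B(x,|C_n(x)|)$, and to bound each term.

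First I would prove an almost-multiplicativity of $\nu$ along orbits in $Y$: there is $K_1(\epsilon)$ with $K_1(\epsilon)\to 0$ and, for all $x\in Y$ and all sufficiently large $n$, $\nu(C_n(x))\le(1+K_1(\epsilon))^n\,\nu(C_{n+1}(x))$. On the $j$-th block of generations $\nu$ redistributes mass in proportion to $\mu_j$ restricted to $X_j(\delta)$, renormalised by $\mu_j(X_j(\delta))^{-1}\le(1-\delta)^{-1}$, so $\nu(C_n(x))/\nu(C_{n+1}(x))$ coincides, up to a factor bounded independently of $n$, with the corresponding ratio for $\mu_j$; properties (2) and (3) of $X_j(\delta)$ then bound that ratio by $\exp\bigl(\lambda(\mu_j)s+(\text{errors growing at most linearly in }n)\bigr)$. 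Since large $n$ forces the block index $j$ to be large, and since the recursion defining the $k_j$ makes the block lengths grow geometrically with ratio at least $\delta^{-1}>2$ while $\lambda(\mu_{j+1})\le 2\lambda(\mu_j)$ keeps the Lyapunov exponents growing with ratio at most $2$, the quantities $\lambda(\mu_j)/n$ and $(\text{error})/n$ tend to $0$; this is exactly what lets the accumulated factor over $n$ generations be absorbed into $(1+K_1(\epsilon))^n$, uniformly for $n$ beyond some $N(\epsilon)$.

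For the second term, I would split $\mathcal N_n(x)$ into those cylinders whose lengths are comparable to $|C_n(x)|$ (the ``large'' side) and a tail of ever-smaller ones accumulating towards an endpoint (the ``small'' side). On the large side the Rényi condition (bounded distortion) both bounds the number of level-$n$ cylinders the ball can reach and keeps each of them comparable in length to $C_n(x)$, so that, by the length/mass dictionary furnished by property (2) together with Step 1, their total $\nu$-mass is at most a bounded multiple of $(1+K_1(\epsilon))^n\,\nu(C_n(x))$. On the small side one uses crucially that \emph{every} cylinder entering the construction lies in some $X_j(\delta)$, so that property (3) forces their lengths to decay geometrically, whence the tail has total length at most $c\,|C_n(x)|$ and, again via the length/mass dictionary, total $\nu$-mass at most $(1+K_2(\epsilon))^n\,\nu(C_n(x))$ with $K_2(\epsilon)\to 0$. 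Combining the two terms and folding in Step 1 gives the lemma with, for instance, $K(\epsilon)=(1+K_1(\epsilon))(1+K_2(\epsilon))-1$, the finitely many bounded constants being absorbed for $n$ large. The main obstacle is securing the uniformity of the subunit constant $K(\epsilon)$ in the non-compact setting: the error budget in both steps scales like $n$ times quantities involving $\lambda(\mu_j)$ (and, in Step 2, involving how many cylinders a ball meets, which for countably many branches need not be bounded a priori), and the fact that this can be beaten rests entirely on the geometric growth of the $k_j$ dominating the slower geometric growth of the $\lambda(\mu_j)$, together with the confinement of the whole construction to the Egorov sets $X_j(\delta)$.
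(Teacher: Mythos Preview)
Your outline is workable but takes a more circuitous route than the paper, and contains a couple of inaccuracies worth flagging.

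First, a small but real misstatement: in the construction, $\nu_j$ assigns \emph{equal} weight to each $k_j$-cylinder in $Y_j$, not weight proportional to $\mu_j$. Your Step~1 argument, as written, appeals to $\mu_j$-ratios via property~(2); it still goes through because properties~(2) and~(3) together force all cylinders in $Y_j$ to have essentially the same length and the same $\mu_j$-measure, so uniform weight and $\mu_j$-weight agree up to factors absorbed into $(1+K_1(\epsilon))^n$. But you should say this rather than treat $\nu_j$ as $\mu_j$-proportional.

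The paper's argument is more direct and avoids your large/small decomposition altogether. It proves, uniformly over all $x,y\in Y$, that $|{-}\log|C_{n+1}(y)|+\log|C_n(x)||$ and $|{\log}\nu(C_{n+1}(y))-\log\nu(C_{n+1}(x))|$ are at most $n\cdot o_\epsilon(1)$; this is exactly the rigidity coming from property~(3) on each Egorov block, with the block-boundary contribution controlled by the choice of $k_i$. Once every $(n{+}1)$-cylinder in $Y$ has length at least $(1+Z(\epsilon))^{-n}|C_n(x)|$, the ball $B(x,|C_n(x)|)$ can meet at most $(1+Z(\epsilon))^n$ of them, and since they all have comparable $\nu$-mass the estimate follows. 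In your language, this rigidity means the ``small'' tail is empty among cylinders carrying $\nu$-mass, so your Step~2 collapses to its ``large'' half, which is precisely the paper's counting argument. Your worry about the number of neighbours being unbounded in the countable-branch setting is therefore a non-issue: $\nu$ is supported on $Y$, and on $Y$ the lengths are rigid.

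So: your plan would succeed, but the paper's single uniform comparison across $Y$ is both shorter and makes clearer why the non-compactness costs nothing here.
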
 
\begin{proof}
To proof this we use the condition in the definition of $Y$. For any $x,y\in Y$ we need to compare the diameter of $C_n(x)$ and $C_{n+1}(y)$ and the measure of $C_{n+1}(y)$ and $C_{n+1}(x)$ . We consider the case when $k_i\leq n\leq k_i+n_i-1$ we then have that for all $x,y\in Y$
$$-\log |C_{n+1}(y)|\leq \sum_{j=1}^{i} k_j(\lambda(\mu_j)+\epsilon)+n_{i+1}(\lambda(\mu_{i+1})+\epsilon)+\sum_{j=2}^{n+1}\var_k (\log |T'|)$$
and
$$-\log |C_{n}(x)|\geq \sum_{j=1}^{i} k_j(\lambda(\mu_j)-\epsilon)-\sum_{j=2}^{n+1}\var_k(\log |T'|).$$
In the case where $k_i+n_i\leq n\leq k_{i+1}$ we simply have that for all $x,y\in Y$
$$|\log |C_{n+1}(y)|-\log |C_n(x)||\leq n\epsilon+\sum_{j=2}^{n+1}\var_k(\log |T'|)+\lambda_{i+1}.$$
We can thus deduce that there exists $Z(\epsilon)$ such that $\lim_{\epsilon\to 0}Z(\epsilon)=0$ and
$$\nu(B(x,|C_n(x)|))\leq (1+Z(\epsilon_n))\max_{y\in Y} \nu(B(y,|C_{n+1}(y)|)).$$
To complete the proof we need a uniform estimate of $\frac{\nu(B(y,|C_{n+1}(y)|))}{\nu(B(x,|C_{n+1}(y)|))}$ for all $x,y\in Y$. This follows from the definition of $Y$. 
\end{proof} 
  We can then define a measure supported on $Y$ as follows. Let $\nu_i$ be the measure which gives each cylinder in $Y_i$ equal weight. We then take the measures
$$\otimes_{j=1}^{l}\sigma^{\sum_{m=1}^{j-1} k_m}\nu_j$$
and note that this can be extended to a measure $\nu$ supported on $Y$.
\begin{lema}
For all $x\in Y$ we have that  $\lim_{n\rightarrow\infty}\frac{S_n\phi(x)}{n}=-\infty$ and 
$$\dim_H Y\geq\dim_H \nu\geq s-C(\delta),$$
for some constant $C(\delta)>0$ where $C(\delta)\rightarrow 0$ as $\delta\rightarrow 0$. 
\end{lema}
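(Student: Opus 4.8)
The plan is to establish the two statements separately, in each case using the block decomposition underlying the definitions of $Y$ and $\nu$ together with the distortion estimate of Lemma~\ref{cylinder}.

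For the divergence $S_n\phi(x)/n\to-\infty$, I would first record that, since $\phi\in\mathcal{R}$, the potential $\phi\circ\pi$ has summable variation, so any two points in a common $m$-cylinder have Birkhoff sums $S_m\phi$ that differ by at most a fixed constant $C_\phi$. Then, for $x\in Y$ and $n\in\N$, I would write $n=\sum_{l=1}^{j-1}k_l+r$ with $0\le r<k_j$ and split $S_n\phi(x)$ into the block sums $S_{k_m}\phi\bigl(T^{\sum_{l<m}k_l}x\bigr)$ for $m<j$ together with the final partial sum $S_r\phi\bigl(T^{\sum_{l<j}k_l}x\bigr)$. Each shifted point lies in a $k_m$-cylinder meeting $X_m(\delta)$, so comparing with a point of $X_m(\delta)$ inside that cylinder and using property~(1) (and $\phi<0$ on the transitional stretches of length $<n_m$) gives $S_{k_m}\phi\bigl(T^{\sum_{l<m}k_l}x\bigr)\le k_m(\alpha_m+\epsilon)+C_\phi$, and likewise for the final partial sum once $r\ge n_j$. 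Since $\alpha_m=\int\phi\, d \mu_m\to-\infty$ while the recursive choice of the $k_i$ (with the factor $1/\delta$) forces $\sum_{l<j}k_l\le\delta k_j$ and $n_j\le\delta k_{j-1}$, the ratio $S_n\phi(x)/n$ is, for large $n$, a convex combination dominated by the terms $\alpha_{j-1}+\epsilon$ and $\alpha_j+\epsilon$, both tending to $-\infty$; the fixed contribution of the early blocks and the $O(j\,C_\phi)$ distortion term are harmless because $j/n\to0$.

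For the dimension bound, $\dim_H Y\ge\dim_H\nu$ holds automatically since $\nu(Y)=1$, so by the mass distribution principle it is enough to show $\liminf_{r\to0}\frac{\log\nu(B(x,r))}{\log r}\ge s-C(\delta)$ for every $x\in Y$. Fixing $x\in Y$ and small $r>0$, I would choose $n$ with $|C_{n+1}(x)|\le r<|C_n(x)|$, so that $B(x,r)\subseteq B(x,|C_n(x)|)$ and Lemma~\ref{cylinder} gives $\nu(B(x,r))\le(1+K(\epsilon))^n\,\nu(C_{n+1}(x))$. Next, using that $\nu$ assigns equal mass to the $k_i$-cylinders of $Y_i$ and that $\#Y_i$ is comparable (up to subexponential factors) to $e^{k_ih(\mu_i)}$ --- which follows from $\mu_i(X_i(\delta))\ge1-\delta$ and properties~(2)--(3) --- together with $h(\mu_i)\approx s\lambda(\mu_i)$, I would derive, block by block, a bound of the form $\nu(C_{n+1}(x))\le|C_{n+1}(x)|^{\,s-c(\epsilon,\delta)}$ with $c(\epsilon,\delta)\to0$, the Renyi distortion entering only through an $O(1)$ additive error. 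Since $r<|C_n(x)|$ forces $-\log r\ge n\log\xi-O(1)$, the factor $(1+K(\epsilon))^n$ is bounded by $r^{-c''}$ for any $c''>\log(1+K(\epsilon))/\log\xi$, so $\nu(B(x,r))\le r^{\,s-C(\delta)}$ for all small $r$; the mass distribution principle then yields $\dim_H\nu\ge s-C(\delta)$, with $C(\delta)\to0$ as $\delta\to0$ once $\epsilon$ is chosen small in terms of $\delta$.

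The divergence statement should be routine. The main obstacle I anticipate is the bookkeeping in the dimension estimate: one must confirm that the junction effects between consecutive blocks, the super-exponential growth of the $k_i$, the distortion factor $(1+K(\epsilon))^n$ from Lemma~\ref{cylinder}, and the Shannon--McMillan count $\#Y_i\approx e^{k_ih(\mu_i)}$ all contribute errors that are either $O(1)$ --- hence killed after dividing by $-\log r$ --- or controlled by a quantity that tends to $0$ with $\delta$, so that all of them can be absorbed into a single constant $C(\delta)$.
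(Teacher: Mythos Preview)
Your proposal is correct and follows essentially the same strategy as the paper: block-by-block control of $S_n\phi$ using summable variation and property~(1) for the divergence, and for the dimension bound the mass distribution principle combined with Lemma~\ref{cylinder} and a product estimate on $\nu(C_n(x))$. The only notable difference is that where you estimate $\nu(C_n(x))$ by first counting $\#Y_i\approx e^{k_ih(\mu_i)}$ via Shannon--McMillan and then invoking $h(\mu_i)\approx s\lambda(\mu_i)$, the paper uses property~(2) directly --- that property already encodes the local dimension $\approx s$, so one gets $\log\nu(C_n(x))\le -i\log\delta+(s-\epsilon)\log|C_n(x)|+O(1)$ without the detour through entropy; your route is equivalent but slightly longer.
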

\begin{proof}
For convenience we will let $z_i=\sum_{l=1}^i k_i$. By our definition of $k_i$ we will have that $\frac{n_{i+1}}{z_i}\leq\delta$.
If $x\in Y$ then we have that for $n\in \left[z_i,z_i+n_{i+1}\right]$, 
$$S_n\phi(x)\leq (\alpha_i+\epsilon)k_i+(\max_{x\in\Lambda}\{\phi(x)\})(n-z_i)+\sum_{j=1}^{\infty} V_j(\phi).$$
 Moreover for $n\in \left[z_i+n_{i+1},z_{i+1}\right]$ we have that 
$$S_n\phi(x)\leq (\alpha_i+\delta)z_i+(n-z_i)(\alpha_{i+1}+\delta)+\sum_{j=1}^\infty V_j(\phi).$$
Combining these two estimates and the definition of $k_i$ we obtain that $\lim_{n\rightarrow\infty} \frac{S_n\phi(x)}{n}=-\infty$.
To find a lower bound for $\dim\nu$ we need to find a lower bound for $\lim_{r\to 0}\frac{\log(\nu(B(x,r))}{\log r}$ for all $x\in Y$.  To start we let $x\in Y$, $n\in \left[z_i,z_i+n_{i+1}\right]$ and note that by the definition of $k_i$ this will mean that
$$\frac{\log C_{Z_i}(x)}{C_{n}(x)}\geq (1-\delta).$$
By the definition of $\nu$ we have that
$$\log\nu(C_n(x))\leq -i\log\delta+(s-\epsilon)\sum_{l=1}^i\log \left|C_i\left(\left(T^{Z_l}(x)\right)\right)\right|$$
which then gives using distortion estimates that
$$\log\nu(C_n(x))\leq -i\log\delta+(s-\epsilon)\log|C_{z_i}(x)|+\sum_{j=1}^{\infty} V_j(\log |T'|).$$
For $n\in \left[\sum_{l=1}^ik_l+n_{i+1},\sum_{l=1}^{i+1}k_l\right]$ we have that
$$\log\nu(C_n(x))\leq -i\log\delta+(s-\epsilon)\left(\left(\sum_{l=1}^i\left(\log \left|C_i(T^{z_l}(x))\right|\right)\right)+|C_{n-z_i}|(T^{z_i}(x))\right).$$
Again by applying distortion estimates we get that
$$\log\nu(C_n(x))\leq -i\log\delta+(s-\epsilon)\log|C_n(x)|+\sum_{j=1}^{\infty} V_j(\log |T'|).$$
Thus for all $n\in [z_i,z_{i+1}]$ we have that
$$\frac{\log\nu(C_n(x))}{\log |C_n(x)|}\geq \frac{-i\log\delta}{\log|C_n(x)|}+(1-\delta)(s-\epsilon)+\frac{\sum_{j=1}^{\infty} V_j}{\log|C_n(x)|}$$
and taking the limit as $i\to\infty$ gives that
$$\frac{\log\nu(C_n(x))}{\log |C_n(x)|}\geq (1-\delta)(s-\epsilon).$$

Now fix $r>0$ and $n$ such that $C_n(x)\geq r>C_{n+1}(x)$. We then have by Lemma  that for $n$ sufficiently large
\begin{eqnarray*}
\nu(B(x,r)&\leq& \nu(B(x,|C_n(x)|))\\
&\leq& (1+k(\epsilon))^n\nu(|C_{n+1}(x)|)\\  
&\leq&  (1+k(\epsilon))^n |C_{n+1}(x)|^{s-\epsilon}\leq (1+k(\epsilon))^n r^{s-\epsilon}.
\end{eqnarray*}
The proof is obtained by noting that $\frac{\log(1+K(\epsilon))^n}{\log r}$ can be made arbitrarily small by choosing $\epsilon$ sufficiently small.
\end{proof} 
The proof of Theorem \ref{extreme} is now finished. We finish this section by noting that combining Theorem \ref{extreme} and Proposition \ref{lowb} gives that for all $\phi\in\mathcal{R}$, $\dim J(-\infty)\geq t^*$.
\


\begin{thebibliography}{99}


\bibitem{ba} L. Barreira  \emph{Dimension and recurrence in hyperbolic dynamics.} Progress in Mathematics, 272. BirkhŠuser Verlag, Basel, 2008. xiv+300 pp.


\bibitem{bs1} L. Barreira  and B. Saussol  \emph{Variational principles and mixed multifractal spectra} Trans. Amer. Math. Soc. 353 (2001), 3919-3944.

\bibitem{b.s} L. Barreira  and J. Schmeling  \emph{Sets of ``non-typical'' points have full topological entropy and full Hausdorff dimension}, Israel J. Math. \textbf{116} (2000), 29--70.

\bibitem{fa}  K. Falconer  \emph{Fractal geometry. Mathematical foundations and applications.} Second edition. John Wiley \& Sons, Inc., Hoboken, NJ, (2003).

 \bibitem{ffw} A. Fan, D. Feng and J. Wu \emph{Recurrence, dimension and entropy.} J. London Math. Soc. (2) 64 (2001), no. 1, 229--244.

\bibitem{fl} A. Fan, L. Liao,  B. Wang and  J. Wu  \emph{On Khintchine exponents and Lyapunov exponents of continued fractions.} Ergodic Theory Dynam. Systems 29 (2009), no. 1, 73--109.

\bibitem{flm}  A. Fan, L. Liao and  J. Ma \emph{On the frequency of partial quotients of regular continued fractions}  arXiv:0906.3283

\bibitem{flw}  D. Feng, K. Lao and J. Wu \emph{Ergodic limits on the conformal repellers} Adv. Math. 169 (2002), no. 1, 58–-91. 

\bibitem{GR}  K. Gelfert  and M. Rams  \emph{The Lyapunov spectrum of some parabolic systems,} Ergodic Theory Dynam. Systems {\bf 29} (2009) 919-940.

\bibitem{hmu} P. Hanus, R.D. Mauldin and M. Urbanski, 
\emph{Thermodynamic formalism and multifractal analysis of conformal infinite iterated function systems.} 
Acta Math. Hungar. 96 (2002), no. 1-2, 27--98.
 
\bibitem{hw} G. Hardy  and E. Wright  \emph{An introduction to the theory of
numbers} fifth edition, Oxford University Press (1979).

\bibitem{I1}  G. Iommi \emph{Multifractal analysis for countable Markov shifts} Ergodic Theory Dynam. Systems 25 (2005), no. 6, 1881--1907.

\bibitem{ik} G. Iommi  and   J. Kiwi \emph{The Lyapunov spectrum is not always concave,}  J. Stat. Phys. \textbf{135} 535-546 (2009).

\bibitem{JK} J. Jaerisch and M. Kesseb\"{o}hmer  \emph{Regularity of multifractal spectra of conformal iterated function systems,} Trans. Amer. Math. Soc. 363 (2011), no. 1, 313–330


\bibitem{jmu} O. Jenkinson, R.D. Mauldin and M.  Urba\'nski  \emph{Zero temperature limits of Gibbs-equilibrium states for countable alphabet subshifts of finite type.} J. Stat. Phys. \textbf{119} 765-776 (2005).



\bibitem{jjop}  A. Johansson, T. Jordan, A. \"{O}berg and M. Pollicott \emph{Multifractal analysis of non-uniformly hyperbolic systems}    Israel journal of Mathematics, 177 (2010), 125–144, 

\bibitem{KMS} M. Kesseb\"{o}hmer, S. Munday and B. Stratmann \emph{Strong renewal theorems and Lyapunov spectra for $\alpha$-Farey and $\alpha$-L\"{u}roth systems}. Preprint available at http://arxiv.org/abs/1006.5693.


\bibitem{KS}  M. Kesseb\"ohmer and  B. Stratmann  \emph{A multifractal analysis for Stern-Brocot intervals, continued fractions and Diophantine growth rates}  Journal f\"ur die reine und angewandte Mathematik (Crelles Journal)
605 (2007) 133-163. 


\bibitem{k} A. Khinchin  \emph{Continued fractions} University of Chicago Press,
(1964).


\bibitem{l}  R. Leplaideur  \emph{A dynamical proof for the convergence of Gibbs measures at temperature zero.} Nonlinearity 18 (2005), no. 6, 2847--2880



\bibitem{mu} R.D. Mauldin and M. Urba\'nski \emph{Dimensions and measures in infinite iterated function systems.}  Proc. London Math. Soc. (3)  73  (1996),  no. 1, 105--154.

\bibitem{olsen} L. Olsen \emph{Multifractal analysis of divergence points of deformed measure theoretical Birkhoff averages.}, J. Math. Pures Appl. (9) 82 (2003), no. 12, 1591--1649.

\bibitem{pw}  M. Pollicott  and H. Weiss \emph{Multifractal analysis of Lyapunov exponent for continued fraction and Manneville-Pomeau transformations and applications to Diophantine approximation.}  Comm. Math. Phys.  207  (1999),  no. 1, 145--171. 



\bibitem{pe} Y. Pesin  \emph{Dimension Theory in Dynamical Systems} CUP (1997).


\bibitem{pw2} Y. Pesin  and  H. Weiss  \emph{The multifractal analysis of Birkhoff averages and large deviations.} Global analysis of dynamical systems, 419--431, Inst. Phys., Bristol, 2001.


\bibitem{sa1} O. Sarig  \emph{Thermodynamic formalism for countable Markov shifts.}  Ergodic Theory Dynam. Systems  19  (1999),  no. 6, 1565--1593. 

\bibitem{sa2} O. Sarig  \emph{Phase transitions for countable Markov shifts.}  Comm. Math. Phys.  217  (2001),  no. 3, 555--577

\bibitem{sa3} O. Sarig \emph{Existence of Gibbs measures for countable Markov shifts.}Proc. Amer. Math. Soc. 131 (2003), no. 6, 1751--1758 


\bibitem{su} B.O. Stratmann and  M. Urba\'nski \emph{Real analyticity of topological pressure for parabolically semihyperbolic generalized polynomial-like maps.} Indag. Math. (N.S.) 14 (2003), no. 1, 119--134. 



\bibitem{wa}
P. Walters  \emph{An Introduction to Ergodic Theory}, Graduate Texts
in Mathematics 79, Springer, 1981.






\end{thebibliography}
\end{document}